\newcommand{\sfrac}[2]{{\textstyle\frac{#1}{#2}}}
\numberwithin{equation}{section}
\newcommand{\Ex}{\mathbb{E}}
\renewcommand\Ex{\operatorname{\mathbb E}{}} 
 \renewcommand{\Pr}{{\mathbb{P}}}
 \newcommand{\eps}{\varepsilon}
 \newcommand\CTCS{\operatorname{CTCS}}
\newcommand\DTCS{\mathrm{DTCS}}
\newtheorem{Lemma}{Lemma}[section]
\newtheorem{lemma}[Lemma]{Lemma}
\newtheorem{Theorem}[Lemma]{Theorem}
\newtheorem{theorem}[Lemma]{Theorem}
\newtheorem{Proposition}[Lemma]{Proposition}
\theoremstyle{definition}
\newtheorem{Remark}[Lemma]{Remark}
\newtheorem{remark}[Lemma]{Remark}
\newtheorem{example}[Lemma]{Example}
\newenvironment{romenumerate}[1][-10pt]{
\addtolength{\leftmargini}{#1}\begin{enumerate}
 }{\end{enumerate}}
 \newcommand{\var}{\mathrm{var}}
\newcommand\nnn{^{[n]}}
\newcommand\bbN{\mathbb N}
\newcommand\bbR{\mathbb R}
\newcommand\bbC{\mathbb C}
\newcommand\setn{\set{1,\dots,n}}
\newcommand\set[1]{\ensuremath{\{#1\}}}
\newcommand\xpar[1]{(#1)}
\newcommand\bigpar[1]{\bigl(#1\bigr)}
\newcommand\Bigpar[1]{\Bigl(#1\Bigr)}
\newcommand\bigsqpar[1]{\bigl[#1\bigr]}
\newcommand\Bigsqpar[1]{\Bigl[#1\Bigr]}
\newcommand\lrsqpar[1]{\left[#1\right]}
\newcommand\cpar[1]{\{#1\}}
\newcommand\bigcpar[1]{\bigl\{#1\bigr\}}
\newcommand\bigabs[1]{\bigl\lvert#1\bigr\rvert}
\newcommand\Bigabs[1]{\Bigl\lvert#1\Bigr\rvert}
\newcommand\lrabs[1]{\left\lvert#1\right\rvert}
\newcommand\xfrac[2]{#1/#2}
\newcommand\KX{K}
\newcommand\XP{P}
\newcommand\ntoo{\ensuremath{{n\to\infty}}}
\newcommand\stoo{\ensuremath{{s\to\infty}}}
\newcommand\oi{\ensuremath{[0,1]}}
\newcommand\oio{\ensuremath{(0,1)}}
\newcommand\qooo{(0,\infty)}
\newcommand\cXt{\XP_{t,1}}
\newcommand\gd{\delta}
\newcommand\gam{\gamma}
\newcommand\gG{\Gamma}
\newcommand\gl{\lambda}
\newcommand\gL{\Lambda}
\newcommand\go{\omega}
\newcommand\gs{\sigma}
\newcommand\gss{\sigma^2}
\newcommand\gt{\tau}
\newcommand\gu{\upsilon}
\newcommand\gU{\Upsilon}
\newcommand\cD{\mathcal D}
\newcommand\cL{{\mathcal L}}
\newcommand{\sumi}{\sum_{i=1}^\infty}
\newcommand{\sumj}{\sum_{j=1}^\infty}
\newcommand{\sumko}{\sum_{k=0}^\infty}
\newcommand{\sumk}{\sum_{k=1}^\infty}
\newcommand\intoo{\int_0^\infty}
\newcommand\intoooo{\int_{-\infty}^\infty}
\newcommand\dd{\,\mathrm{d}}
\newcommand\ddx{\mathrm{d}}
\newcommand\intoi{\int_0^1}
\newcommand\intox{\int_0^x}
\newcommand\qw{^{-1}}
\newcommand\Res{\operatorname{Res}}
\newcommand\nux{\nu_\Delta}
\newcommand\tnux{\widetilde{\nux}}
\newcommand\qww{^{-2}}
\newcommand\ii{\mathrm{i}}
\newcommand\downto{\searrow}
\newcommand\upto{\nearrow}
\newcommand{\refT}[1]{Theorem~\ref{#1}}
\newcommand{\refTs}[1]{Theorems~\ref{#1}}
\newcommand{\refL}[1]{Lemma~\ref{#1}}
\newcommand{\refLs}[1]{Lemmas~\ref{#1}}
\newcommand{\refR}[1]{Remark~\ref{#1}}
\newcommand{\refS}[1]{Section~\ref{#1}}
\newcommand{\refSs}[1]{Sections~\ref{#1}}
\newcommand{\refSS}[1]{Section~\ref{#1}}
\newcommand{\refApp}[1]{Appendix~\ref{#1}}
\newcommand\lhs{left-hand side}
\newcommand\rhs{right-hand side}
\newcommand\intgs{\int_{\gs-\ii\infty}^{\gs+\ii\infty}}
\newcommand\M[1]{\widetilde{#1}}
\newcommand\Mf{\M f}
\newcommand\Mr{\M r}
\newcommand\Mrprime{\M{r}\,{}'}
\newcommand\Mmu{\M\mu}
\newcommand\MgU{\M\gU}
\newcommand\Mfn{\M{f}_n}
\newcommand\MHn{\M{H}_n}
\newcommand\marginal[1]{\marginpar[\raggedleft\tiny #1]{\raggedright\tiny#1}}
\newcommand\REM[1]{{\raggedright\texttt{[#1]}\par\marginal{XXX}}}
\newcommand{\tend}{\longrightarrow}
\newcommand\dto{\overset{\mathrm{d}}{\tend}}
\newcommand\pto{\overset{\mathrm{p}}{\tend}}
\newcommand\E{\Ex}
\newcommand\cDo{\cD^\circ}
\newcommand\gsx{\gs_*}
\newcommand\gLX{\gL^*}
\newcommand\rhox{\rho_*}
\newcommand\hnu{\widehat\nu}
\newcommand\hF{\widehat{F}}
\newcommand\hf{\widehat{f}}
\newcommand\hks{\widehat {k_s}}
\xdef\klockan{\the\count1.0\the\count255}
\xdef\klockan{\the\count1.\the\count255}\fi
\begin{document}

\title[Mellin analysis of a random tree]
{The Critical Beta-splitting Random Tree IV: Mellin analysis of Leaf Height}

 \author{David J. Aldous}
\address{Department of Statistics,
 367 Evans Hall \#\  3860,
 U.C. Berkeley CA 94720}
\email{aldousdj@berkeley.edu}
\urladdr{www.stat.berkeley.edu/users/aldous.}

\author{Svante Janson}
\thanks{SJ supported by the Knut and Alice Wallenberg Foundation
and
the Swedish Research Council
}
\address{Department of Mathematics, Uppsala University, PO Box 480,
SE-751~06 Uppsala, Sweden}
\email{svante.janson@math.uu.se}
\newcommand\urladdrx[1]{{\urladdr{\def~{{\tiny$\sim$}}#1}}}
\urladdrx{www2.math.uu.se/~svante}

\date{December 14, 2024} 

 \begin{abstract}
 In the critical beta-splitting model of a random $n$-leaf rooted tree, clades are recursively split into sub-clades, and a clade of $m$ leaves is split into sub-clades 
 containing  $i$ and $m-i$ leaves  with probabilities $\propto 1/(i(m-i))$. 
 The height of a uniform random leaf can be represented as the absorption
 time of a certain {\em harmonic descent} Markov chain.  
 Recent work on these heights $D_n$ and $L_n$ (corresponding to discrete or continuous versions of the tree)
 has led to quite sharp expressions for their asymptotic distributions, based on their Markov chain description.
 This article gives even sharper expressions, based on
 an $n \to \infty$ limit tree structure described via exchangeable random partitions
 in the style of Haas et al (2008).
 Within this structure, calculations of moments lead to expressions for Mellin transforms,
 and then via Mellin inversion
 we obtain sharp estimates for the expectation, variance, Normal approximation and large deviation
 behavior of $D_n$.
 \end{abstract}
 
\maketitle

  
{\tt {\bf Keywords:}
Exchangeable partition, Markov chain, Mellin transform, random tree, subordinator.}

MSC 60C05; 05C05, 44A15, 60G09.

\tableofcontents


\section{Introduction}\label{S:intro}

\subsection{The tree model}
  \label{sec:tree}
A more detailed account of the model, with graphics, is given in \cite{beta3-arxiv}, and this article is in some ways a continuation of that article.

For $m \ge 2$, consider the probability distribution $(q(m,i),\ 1 \le i \le m-1)$
constructed to be proportional to $\frac{1}{i(m-i)}$.
 Explicitly (by writing $\tfrac{1}{i(m-i)}=\bigl(\tfrac{1}{i}+\tfrac{1}{m-i}\bigr)/m$)
\begin{equation}\label{01}
q(m,i)=\tfrac{m}{2h_{m-1}}\cdot\tfrac{1}{i(m-i)},\,\,1\le i\le m-1,
\end{equation}
where $h_{m-1}$ is the harmonic sum $\sum_{i=1}^{m-1}1/i$. 

Now fix $n \ge 2$.
Consider the process of constructing a random tree by recursively splitting the integer interval 
$[n] = \{1,2,\ldots,n\}$ of ``leaves" as follows.
First specify that there is a left edge and a right edge at the root,
 leading to a left subtree which will have the\footnote{$G$ for {\em gauche} (left) because we use $L_n$ for leaf height.}  $G_n$ leaves $\{1,\ldots,G_n\}$
 and a right subtree which will have the remaining $R_n = n - G_n$ leaves $\{G_n + 1,\ldots, n\}$, where $G_n$ 
 (and also $R_n$, by symmetry) has distribution $q(n,\cdot)$. 
 Recursively, a subinterval with $m\ge 2$ leaves is split into two subintervals of random size
  from the distribution $q(m,\cdot)$. 
  Continue until reaching intervals of size $1$, which are the leaves.
This process has a natural tree structure. 
In this discrete-time construction, which we call $\DTCS(n)$, we regard the edges of the tree as having length $1$.
It turns out (see e.g. \cite{beta2-arxiv})
to be natural to consider also the continuous-time construction $\CTCS(n)$ in which 
a size-$m$ interval is split at rate $h_{m-1}$, that is after an Exponential$(h_{m-1})$ holding time.
Once such a tree is constructed, it is natural to identify ``time" with ``distance": a leaf that appears at time $t$ has
{\em height} $t$.
Of course the discrete-time model is implicit within the continuous-time model, and a leaf in $\DTCS(n)$ which appears after 
$\ell$ splits has {\em hop-height} $\ell$.

Finally, our results do not use the leaf-labels $\{1,2,\ldots, n \}$ in the interval-splitting construction.
Instead they involve a uniform {\em random} leaf. 
Equivalently, one could take a uniform random permutation of labels and then talk about
the leaf with some arbitrary label.


Many aspects of this model can be studied 
by different techniques, see \cite{beta2-arxiv} for an overview.
In this article we focus on one aspect and one methodology, as follows.

\subsection{Leaf heights}
\label{sec:Lh}
It is an elementary calculation \cite{beta1} to show that the discrete time process described by
\begin{quote}
In the path from the root to a uniform random leaf in $\DTCS(n)$, consider  at each step the size (number of leaves) of the sub-tree rooted at the current position
\end{quote}
is the discrete time Markov chain $(X^{disc}_t, t = 0, 1, 2, \ldots)$ on states $\{1,2,3,\ldots\}$   with transition probabilities
\begin{equation}
q^*_{m,i} := \sfrac{1}{h_{m-1}} \ \sfrac{1}{m-i}, \ 1 \le i \le m-1, m \ge 2 .
\end{equation}
Similarly,  the continuous time process 
described by
\begin{quote}
Move at speed one along the edges of the path from the root to a uniform random leaf of $\CTCS(n)$, and consider  at each time the size (number of leaves) of the sub-tree rooted at the current position
\end{quote}
is the continuous time Markov chain $(X^{cont}_t, t \ge 0)$ 
with transition rates
\begin{equation}
\lambda_{m,i} := \sfrac{1}{m-i}, \ 1 \le i \le m-1, m \ge 2 .
\end{equation}
Each process is absorbed at state $1$.
So if we define

$D_n :=$ hop-height of a uniform random leaf of $\DTCS(n)$

$L_n :=$ height of a uniform random leaf of $\CTCS(n)$

\noindent
then these are the same as the appropriate Markov chain absorption time
\begin{eqnarray}
D_n & = & \inf\{t: X^{cont}_t = 1 \mid X^{cont}_0 = n\} \label{Dndef}\\
L_n & = & \min\{t: X^{disc}_t = 1 \mid X^{disc}_0 = n\} \label{Lndef}.
\end{eqnarray}

We call these Markov chains the {\em harmonic descent (HD)} chains.  
Recent studies of $D_n$ and $L_n$ \cite{HDchain,beta1,iksanovCLT,iksanovHD,kolesnik}
have been based on the Markov chain representation (\ref{Dndef}, \ref{Lndef}).
Note that to study $n \to \infty$ asymptotics for these chains,  one cannot directly formalize the idea of starting a hypothetical version of the chain from
$+\infty$ at time  $- \infty$. 
However in the underlying random tree model $\CTCS(n)$ there is a more sophisticated 
way to formalize a limit tree structure $\CTCS(\infty)$ via exchangeable random partitions (Section \ref{sec:exch}).
In the context of our model, 
this methodology was first used in \cite{beta3-arxiv}, and  this continuation article will show that one can then apply Mellin transform techniques 
to obtain asymptotic estimates that are sharper than those obtained by previous methods.
Here are summaries of our main results.

\subsection{Summary of results}
\label{sec:summary}
Let $\psi$ be the digamma function (see Section \ref{SSpsi}). 
Let $0>s_1>s_2>\dots$ be the negative roots of $\psi(s)=\psi(1)$.
Recall that $\zeta(2) = \pi^2/6$ and $\zeta(3) \doteq 1.202$,
and note that $\sim$ in the results below denotes asymptotic expansion
(see \refS{SSasy}).

\begin{Theorem}\label{TD1}
As $n \to \infty$
  \begin{align}\label{aw21}
  \Ex[D_n] \sim 
\frac{6}{\pi^2}\log n
+\sum_{i=0}^\infty c_{i} n^{-i}
+\sum_{j=1}^\infty\sum_{k=1}^\infty c_{j,k}\,n^{-|s_j|-k}
\end{align}
for some coefficients $c_i$ and $c_{j,k}$ that can be found explicitly;
in particular, $c_1=-3/\pi^2$ and 
\begin{equation}
c_0 =  {\frac{\zeta(3)}{\zeta(2)^2}+\frac{\gam}{\zeta(2)}} \doteq  0.795155660439.
\label{c01}
\end{equation}
\end{Theorem}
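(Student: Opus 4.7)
The strategy is to analyze the expected hop-height $f_n := \E[D_n]$ via a Mellin transform whose analytic structure is dictated by the $\CTCS(\infty)$ limit-tree representation from \cite{beta3-arxiv}. The harmonic descent chain gives the exact recursion
\begin{equation*}
  f_n = 1 + \frac{1}{h_{n-1}}\sum_{i=1}^{n-1}\frac{f_i}{n-i}, \qquad n\ge 2,\quad f_1=0,
\end{equation*}
but I would not attack it directly. Instead, the first step is to use the exchangeable-partition description of $\CTCS(\infty)$ to realize the tagged-leaf descent as an additive functional of a subordinator (or a harmonic renewal-type process), and then to read off a closed form for an appropriate Mellin transform $\MHn(s)$ of $f_n$. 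The harmonic normalization $1/h_{m-1} = 1/(\psi(m)-\psi(1))$ intrinsic to the chain is responsible for a factor $1/(\psi(s)-\psi(1))$ in the transform; since $\psi(s)=\psi(1)$ on the real line exactly at $s=1$ and at $s=s_1,s_2,\dots$, this automatically forces the asymptotic expansion to involve only the exponents $1,|s_1|,|s_2|,\dots$, shifted by integers coming from a separate Gamma-type prefactor.

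The second step is Mellin inversion together with residue calculus. The double pole created at $s=1$ (from the $\psi$-denominator combined with the $n^{-s}$ kernel) produces the leading $\tfrac{6}{\pi^2}\log n$ and a constant term. Using the Taylor expansion
\begin{equation*}
  \psi(s)-\psi(1) = \zeta(2)(s-1) - \zeta(3)(s-1)^2 + O\bigl((s-1)^3\bigr)
\end{equation*}
together with $h_{n-1}=\log n+\gam + O(1/n)$, one recovers $c_0 = \zeta(3)/\zeta(2)^2 + \gam/\zeta(2)$ as in \eqref{c01}. Residues at the integer poles of the analytic prefactor (coming from Gamma-function singularities) produce the power series $\sum_{i\ge 0} c_i n^{-i}$, with the explicit value $c_1=-3/\pi^2$ extracted from the next Laurent coefficient; residues at the shifted positions $s=s_j-k$ (the simple zeros $s_j$ of $\psi(s)-\psi(1)$, translated by the integer poles of the prefactor) produce the double sum $\sum_{j,k\ge 1} c_{j,k} n^{-|s_j|-k}$.

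The main obstacles will be twofold. First, the explicit identification of $\MHn(s)$ from the exchangeable-partition representation — writing $f_n$ as the expectation of an explicit functional of a subordinator and then Mellin-transforming — needs a clean interpolation between the discrete recursion for $f_n$ and a continuous analogue, and this is where the $\CTCS(\infty)$ construction does the real work. Second, turning the formal residue expansion into a genuine asymptotic expansion as stated in the theorem requires uniform bounds on $\MHn(s)$ along arbitrarily deep vertical lines, routed so as to avoid the zeros $s_j$ of $\psi(s)-\psi(1)$ which accumulate on the negative real axis (one in each interval $(-j,-j+1)$). The necessary growth estimates on $1/(\psi(s)-\psi(1))$ and on the prefactor (uniformly in $n$) must be established quantitatively. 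The explicit numerical values $c_1=-3/\pi^2$ and the formula \eqref{c01} for $c_0$ serve as a welcome sanity check on the matching of Laurent expansions in the two-singularity-factor situation at $s=1$.
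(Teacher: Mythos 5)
Your high-level plan --- exchangeable-partition structure forcing a $\psi(s)-\psi(1)$ denominator, then Mellin inversion plus residue calculus at the roots $s_j$ --- is the right idea, and it is essentially the paper's second proof of this theorem (Section~\ref{SED2}, via the Parseval formula of Lemma~\ref{LP}). But two of your intermediate steps do not match what actually happens, and as stated they would not go through.

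First, the crucial identity is not ``a Mellin transform $\MHn(s)$ of $f_n=\E[D_n]$.'' What the paper proves (and what you need) is an integral representation \emph{in $x$}, not a transform of the sequence in $n$: one defines the infinite measure $\gU=\int_0^\infty \cL(P_{t,1})\,\dd t$, shows $\MgU(s)=1/(\psi(s)-\psi(1))$ from $\E[P_{t,1}^{s-1}]=e^{-t(\psi(s)-\psi(1))}$, and proves $\E[D_n]=\int_0^1\bigl(1-(1-x)^{n-1}\bigr)\,\dd\gU(x)$ via the paintbox construction. The Parseval pairing is then $\frac{1}{2\pi\ii}\int_{\gs}\Mfn(s)\,\MgU(1-s)\,\dd s$ with $\Mfn$ the Mellin transform of the kernel $1-(1-x)^{n-1}$, namely $\Mfn(s)=\frac1s\bigl(1-\gG(s+1)\gG(n)/\gG(n+s)\bigr)$. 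Your proposal gestures at a ``subordinator additive functional'' and a ``clean interpolation'' without producing this identity, and without it there is no object to invert.

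Second, your attribution of the sub-leading terms to residues is wrong in both families. The power series $\sum_{i\ge0}c_i n^{-i}$ does not come from ``integer poles of an analytic prefactor''; it comes from the double pole at $s=0$ (equivalently $s'=1$ after $s\mapsto 1-s'$), whose residue is $\frac{6}{\pi^2}h_{n-1}+\frac{\zeta(3)}{\zeta(2)^2}$, followed by inserting the Stirling-type expansion $h_{n-1}\sim \log n+\gamma-\frac{1}{2}n^{-1}-\sum_{k}\frac{B_{2k}}{2k}n^{-2k}$. And the double sum $\sum_{j,k}c_{j,k}n^{-|s_j|-k}$ does not arise from residues at ``shifted positions $s_j-k$''; the only poles you cross are the simple poles at $s=1-s_j=1+|s_j|$, each contributing a single term $-\frac{\gG(|s_j|+1)}{\psi'(s_j)}\cdot\frac{\gG(n)}{\gG(n+|s_j|+1)}$, and the integer shifts $k$ then come from the classical asymptotic expansion $\gG(n)/\gG(n+b)\sim\sum_{k\ge0}g_k(b)\,n^{-b-k}$. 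If you insist on a residue-at-$s_j-k$ picture you would need a different (double) Mellin--Barnes representation that you have not set up, and it is not clear it would converge where needed. Your observation about needing uniform vertical-line estimates to justify shifting the contour is correct and corresponds to Lemmas~\ref{Lhw6}--\ref{Lhw7}; but the residue bookkeeping as written would produce the wrong expansion.
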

Theorem \ref{TD1} is proved in Sections \ref{sec:EDfull} and \ref{SED2}.
It improves on \cite[Theorem 1.2 and Proposition 2.3]{beta1} which gave the initial terms 
$\frac{6}{\pi^2}\log n + c_0 + c_1n^{-1}$ 
with the explicit formula for $c_1$ but not the formula\footnote{Before knowing the exact value of $c_0$, numerics gave an estimate that agrees with \eqref{c01} to 10 places.} 
for $c_0$.
The discussion of the  \emph{h-ansatz} in \cite{beta1} assumes that only integer  powers of $1/n$ should appear in the expansion \eqref{aw21}, 
but in fact (surprisingly?)\ the spectrum of powers of $n$ appearing is $\set{-i:i\ge0}\cup\set{-(|s_j|+k):j\ge1,k\ge1}$.

Note that there is a simple recurrence for $ \Ex[D_n]$.
The theme of \cite{beta1} was to exploit ``the recurrence method", that is to take a sequence defined by a recurrence
and then upper and lower bound the unknown sequence by known sequences.
This method was used in \cite{beta1} for many of the problems in this paper, as indicated in the references below.

\begin{Theorem}\label{TL1}
As $n \to \infty$
\begin{multline}\label{sw2x1}
  \E[L_n]\sim
\frac{3}{\pi^2}\log^2n 
+ \Bigpar{\frac{\zeta(3)}{\zeta(2)^2}+\frac{\gam}{\zeta(2)}}\log n
+b_0
\\
+\sumk a_k n^{-k}\log n
+\sumk b_k n^{-k}
+ \sumj \sumk c_{j,k}n^{-|s_j|-k}
\end{multline}
for some computable constants $a_k$, $b_k$, $c_{j,k}$;
in particular,
\begin{align}\label{hw211}
  b_0 = 
  \frac{3\gam^2}{\pi^2}
+\frac{\zeta(3)}{\zeta(2)^2}\gam+
\frac{\zeta(3)^2}{\zeta(2)^3}
+\frac{1}{10}
\doteq
0.78234
.\end{align}
\end{Theorem}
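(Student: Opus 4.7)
The plan is to adapt the Mellin-transform machinery developed for Theorem~\ref{TD1} in Sections~\ref{sec:EDfull} and \ref{SED2} to the continuous-time height $L_n$. The starting point is the relation between $L_n$ and $D_n$: since the jump chain of $X^{cont}$ is $X^{disc}$ and the holding time at state $m$ is Exponential$(h_{m-1})$, conditioning on the jump chain gives
\begin{equation*}
 \E[L_n]=\E\Bigsqpar{\sum_{t=0}^{D_n-1}\frac{1}{h_{X^{disc}_t-1}}}
        =\sum_{m=2}^{n}\frac{U_n(m)}{h_{m-1}},
\end{equation*}
where $U_n(m)$ is the expected number of visits of $X^{disc}$ to state $m$ starting from $n$. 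This is precisely the sum that yields $\E[D_n]=\sum_{m=2}^n U_n(m)$ in the proof of Theorem~\ref{TD1}, reweighted by $1/h_{m-1}$.

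Next I would pass to the $\CTCS(\infty)$ limit via the exchangeable random partition description (\refS{sec:exch}) and express $\sum_m U_n(m)/h_{m-1}$ as an integral against the law of the associated tagged-fragment subordinator, in the same spirit as was done for $\E[D_n]$. Taking the Mellin transform then produces a meromorphic function whose pole structure can be analyzed. Because $h_{m-1}=\psi(m)+\gam$ has leading behavior $\log m$ and the Mellin variable $s$ is dual to $\log n$, the weight $1/h_{m-1}$ contributes a digamma-type factor with a simple pole at $s=0$, so the double pole which produced $\frac{6}{\pi^2}\log n + c_0$ for $D_n$ becomes a triple pole for $L_n$. Its three leading Laurent coefficients yield exactly the leading terms $\frac{3}{\pi^2}\log^2 n+\bigpar{\frac{\zeta(3)}{\zeta(2)^2}+\frac{\gam}{\zeta(2)}}\log n+b_0$ of \eqref{sw2x1}.

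Moving the inversion contour further to the left then produces the remaining terms by residue calculus. The integer poles $s=-k$ for $k\ge 1$, which were simple in the $D_n$-transform, become double poles for $L_n$, generating the $a_k n^{-k}\log n + b_k n^{-k}$ contributions. The anomalous poles at the negative zeros $s_j$ of $\psi(s)-\psi(1)$, together with their integer shifts, remain simple and give the $c_{j,k}\,n^{-|s_j|-k}$ terms; the contour shift and the tail estimates needed to justify the asymptotic expansion are essentially identical to those used for Theorem~\ref{TD1}.

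The main obstacle is the explicit evaluation of $b_0$ in \eqref{hw211}, which is the constant term in the Laurent expansion of the transform at the triple pole $s=0$. Computing it requires combining the second-order expansion at $s=0$ of the transform underlying Theorem~\ref{TD1} (which already involves $c_0$, and hence $\gam$, $\zeta(2)$, and $\zeta(3)$) with the expansion
\begin{equation*}
\frac{1}{\psi(s+1)-\psi(1)}=\frac{1}{\zeta(2)s}+\frac{\zeta(3)}{\zeta(2)^2}+\Bigpar{\frac{\zeta(3)^2}{\zeta(2)^3}-\frac{\zeta(4)}{\zeta(2)^2}}s+\cdots
\end{equation*}
to third order. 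The rational term $\frac{1}{10}$ in $b_0$ emerges via the identity $\zeta(4)/\zeta(2)^2=2/5$ together with factors of $\tfrac12$ coming from the $s^2$ term in the Taylor expansion of $n^{-s}$, and getting this bookkeeping right is the most delicate step of the argument.
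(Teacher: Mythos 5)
Your starting identity $\E[L_n]=\sum_{m=2}^n U_n(m)/h_{m-1}$ has the roles of $D_n$ and $L_n$ reversed. Despite a typo in the wording of \refS{sec:Lh}, the identities \eqref{cy2.5}--\eqref{cy32} and the asymptotics leave no doubt that $L_n$ is the \emph{discrete} hop-count, with $\E[L_n]=\sum_j a(n,j)$ and no $1/h_{j-1}$ weight, while $D_n$ is the \emph{continuous-time} height with $\E[D_n]=\sum_j a(n,j)/h_{j-1}$. What you have written down is the paper's formula for $\E[D_n]$, and carried through it yields $\frac{6}{\pi^2}\log n$, not $\frac{3}{\pi^2}\log^2 n$. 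The accompanying heuristic --- that ``the weight $1/h_{m-1}$ contributes a digamma-type factor with a simple pole at $s=0$, so the double pole becomes a triple pole'' --- is analytically backwards: dividing by a quantity of order $\log m$ \emph{lowers} the asymptotic growth and hence the pole order. The actual source of the triple pole in the paper is the factor $h_{j-1}$ that remains inside the sum when $a(n,j)$ is taken unweighted: one writes $\E[L_n]=\int_0^1 H_n(x)\,\dd\gU(x)$ with $H_n(x)=\sum_j h_{j-1}\binom{n-1}{j-1}x^{j-1}(1-x)^{n-j}$, uses $h_k=\int_0^1\frac{1-u^k}{1-u}\dd u$ to derive $\MHn(s)=\bigpar{h_{n-1}-\Mfn(s)}/s$, and it is precisely this extra $1/s$ that upgrades the double pole at $s=0$ to a triple pole.

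Your description of the subleading terms is also misattributed. In the paper's Parseval argument the contour is shifted to the \emph{right}, to large positive $\gs$, so the only poles swept over are at $s=0$ (triple) and at $s=1-s_j=1+|s_j|$ for $j\ge1$ (simple); there are no poles at $s=-k$ involved. The $a_kn^{-k}\log n$ and $b_kn^{-k}$ terms arise from expanding the residue at $s=0$ in powers of $1/n$ --- specifically from the asymptotic expansions of $h_{n-1}^2$, $h_{n-1}$, $\psi'(n)$, and $\gG(n)/\gG(n+b)$ via \eqref{an1}, \eqref{5.11.2}, and \eqref{an2} --- not from additional poles. Your Laurent series for $1/(\psi(s+1)-\psi(1))$ and the identity $\zeta(4)/\zeta(2)^2=2/5$ are correct, but without the right underlying identity and the right mechanism for the triple pole, the residue calculation for $b_0$ cannot be carried through as described.
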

Theorem \ref{TL1} is proved in Sections \ref{SEL1} and \ref{SEL2}.
The first term $\frac{3}{\pi^2}\log^2n$ was observed long ago in \cite{me_clad}.
Using the recurrence method, the coefficient for $\log n$ 
was found in  \cite[Theorem 1.2]{beta1}; that coefficient 
equals the constant term $c_0$ in the asymptotic expansion \eqref{aw21} of
$\E[D_n]$.

\begin{theorem}\label{TvarD1}
As $n \to \infty$
  \begin{align}\label{kb91}
\var[D_n]
=
\frac{2\zeta(3)}{\zeta(2)^3}\log n
+ \frac{2\zeta(3)}{\zeta(2)^3}\gamma
+\frac{5\zeta(3)^2}{\zeta(2)^4}
-\frac{18}{5\pi^2}
+ O\Bigpar{\frac{\log n}{n}}
.\end{align}
\end{theorem}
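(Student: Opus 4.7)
The plan is to derive an asymptotic expansion of $\E[D_n^2]$ by the same Mellin-transform / exchangeable-partition machinery used to prove \refT{TD1}, and then to form $\var[D_n] = \E[D_n^2]-(\E[D_n])^2$ using the sharp expansion of $\E[D_n]$ already in hand. The starting point is a clean identity. Decomposing $D_n = \sum_{k=0}^{L_n-1}T_k$ into conditionally independent exponential holding times $T_k\sim\Expo(h_{X_k-1})$ along the discrete path $n = X_0 > X_1 > \cdots > X_{L_n} = 1$, squaring, and using the Markov property to factor joint visit probabilities as $\Pr_n(m'\text{ visited})\cdot \Pr_{m'}(m\text{ visited})$ for $m < m'$, a short calculation gives
\begin{equation}\label{Ed2-id}
\E[D_n^2] = 2\sum_{m=2}^{n} a_n(m)\,\frac{\E[D_m]}{h_{m-1}},
\end{equation}
where $a_n(m) := \Pr_n(m\text{ is visited})$; the boundary terms in the Markov-pairing calculation cancel against $\E\bigsqpar{\sum_k T_k^2} = 2\sum_m a_n(m)/h_{m-1}^2$. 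Equivalently, \eqref{Ed2-id} can be read off from the recurrence $\E[D_n^2] = 2\E[D_n]/h_{n-1} + \sum_i q^*_{n,i}\E[D_i^2]$.

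The Mellin analysis now proceeds as follows. By \refT{TD1}, $\E[D_m]/h_{m-1} = 1/\zeta(2) + O(1/\log m)$, with more detailed structure determined by the digamma function. The Mellin transform of $a(m)$ (the $n\to\infty$ limit of $a_n(m)$, available from the exchangeable-partition representation of \refS{sec:EDfull}) has a simple pole at $s=0$ with residue $1/\zeta(2)$; multiplication by $\E[D_\cdot]/h_{\cdot-1}$ promotes this to a double pole and introduces additional simple poles. Shifting the contour past $s=-1$ (and past any $s_j$ with $|s_j|\le 1$) and collecting residues yields an asymptotic expansion of the right-hand side of \eqref{Ed2-id}. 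Subtracting $(\E[D_n])^2$ then cancels the leading $\log^2 n/\zeta(2)^2$ together with the $c_0$-linear $\log n$ cross-term, leaving $\var[D_n] = \frac{2\zeta(3)}{\zeta(2)^3}\log n + \mathrm{const} + O(\log n / n)$. The $\log n$ coefficient matches the classical subordinator variance $\sigma^2/\mu^3$ for the \Levy\ measure $\nu(\dd y) = \dd y/(e^y-1)$ on $(0,\infty)$, which has $\mu = \int y\,\nu(\dd y) = \zeta(2)$ and $\sigma^2 = \int y^2\,\nu(\dd y) = 2\zeta(3)$; the relevant subordinator is the $n \to \infty$ limit of $-\log(X^{cont}_t/n)$.

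The main obstacle is isolating the constant term. The leading-order cancellation is enforced by the subordinator heuristic, but the stated constant $\frac{2\zeta(3)\gamma}{\zeta(2)^3} + \frac{5\zeta(3)^2}{\zeta(2)^4} - \frac{18}{5\pi^2}$ combines several subleading contributions: the $\gamma$-term reflects the difference between $\log n$ and $h_{n-1}$ in the Mellin inversion; the $\zeta(3)^2$-term arises from a higher-order double-pole residue involving $\psi'(1)=\zeta(2)$ and $\psi''(1)=-2\zeta(3)$; and the $-18/(5\pi^2)$ comes from a boundary correction near the absorbing state $1$, where the subordinator approximation breaks down and the small-$m$ behavior of $a_n(m)$ must be tracked explicitly. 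Verifying that these residue contributions, together with the $-c_0^2$ arising from squaring $\E[D_n]$, combine exactly to \eqref{kb91} — with no hidden cross-terms — is the delicate bookkeeping step.
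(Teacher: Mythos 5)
Your starting identity \eqref{Ed2-id} is correct, and it is a genuinely different point of departure from the paper's: the paper does not pass through visit probabilities at all, but instead defines the measure $\gU_k:=k\intoo t^{k-1}\cL(P_{t,1})\dd t$, whose Mellin transform is $k!/(\psi(s)-\psi(1))^k$, writes $\E[D_n^k]=\intoi[1-(1-x)^{n-1}]\dd\gU_k(x)$, applies the Parseval lemma (Lemma~\ref{LP}) to get a single contour integral $\E[D_n^k]=-\tfrac{k!}{2\pi\ii}\intgs\tfrac{\gG(s)\gG(n)/\gG(n+s)}{(\psi(1-s)-\psi(1))^k}\dd s$, and then shifts the contour and reads off residues. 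Your identity is in fact secretly equivalent: substituting the exact formulas \eqref{cy2} for $a(n,m)$ and \eqref{cxED} for $\E[D_m]$ into \eqref{Ed2-id} and summing the binomial series yields $\E[D_n^2]=2\intoi\intoi[1-(1-xy)^{n-1}]\dd\gU(x)\dd\gU(y)$, i.e.\ \eqref{az3} with $\gU_2$ realized as twice the multiplicative convolution $\gU*\gU$ (both have Mellin transform $2/(\psi(s)-\psi(1))^2$). Had you made that observation, you would have landed on the paper's computation.

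As written, though, the proposal has a real gap: everything after \eqref{Ed2-id} is an outline, and the outline has problems. You propose to do ``Mellin analysis'' on the $n$-dependent discrete sum $\sum_{m}a_n(m)\E[D_m]/h_{m-1}$ by using the Mellin transform of the \emph{limit} $a(m)$; but $a_n(m)$ differs from $a(m)$ by an amount that is not uniformly small (for instance $a_n(n)=1$ while $a(n)\to0$), so the replacement for $m$ comparable to $n$ must be controlled, and this is exactly where the constant term is sensitive. Your attribution of the $-18/(5\pi^2)$ to ``a boundary correction near the absorbing state $1$'' is also not where it comes from in the actual Mellin computation: in the paper's contour-shift, that constant appears inside the residue at the triple pole $s=0$, where expanding $\gG(s)/(\psi(1-s)-\psi(1))^2$ brings in $\psi'''(1)=6\zeta(4)$ and hence $\zeta(4)/\zeta(2)^2=2/5$. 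Finally, you explicitly defer the ``delicate bookkeeping step'' of verifying the constant, which is the entire content of Theorem~\ref{TvarD1} beyond the previously known $\tfrac{2\zeta(3)}{\zeta(2)^3}\log n$ leading term. To close the gap you need either to convert \eqref{Ed2-id} into the double-integral form above and then follow the paper's residue calculation giving \eqref{kb7}--\eqref{kb8}, or to do an honest Mellin--Perron-type analysis of the $n$-dependent sum, which would be substantially more work.
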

Theorem \ref{TvarD1} is proved in Section \ref{SSEDk}.
The leading term $\frac{2\zeta(3)}{\zeta(2)^3}\log n$ was found by in \cite[Theorem 1.1]{beta1} by the recursion method.
Higher moments of $D_n$ are discussed in Section \ref{SSEDk}.

\begin{theorem}\label{Tmgf1}
For $-\infty < z < 1$ there is a 
unique real number $\rho(z)$ in $(-1,\infty)$ satisfying
 $ \psi\bigpar{1+\rho(z)}-\psi(1)=z$.
Then
\begin{align}\label{qz31}
\E[e^{zD_n}] &
=
\frac{-z\gG(-\rho(z))}{\psi'(1+\rho(z))}\frac{\gG(n)}{\gG(n-\rho(z))}
+O\bigpar{n^{-\gsx}}
\end{align}
and
\begin{align}\label{qz331}
\E[e^{zD_n}] &
=
\frac{-z\gG(-\rho(z))}{\psi'(1+\rho(z))}n^{\rho(z)}
\cdot\bigpar{1+O\bigpar{n^{-\min(1,\gsx+\rho(z))}}}
\end{align}
where 
$\gsx   \doteq 1.457$
is defined at \eqref{gs1}.
Furthermore, \eqref{qz31} holds uniformly for $z<1-\gd$ for any $\gd>0$,
and \eqref{qz331} holds uniformly for $z$ in a compact subset of $(-\infty,1)$.
\end{theorem}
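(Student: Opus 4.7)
The plan is to apply Mellin transform analysis to the moment generating function $\phi_n(z) := \E[e^{zD_n}]$, exploiting the limit-tree representation $\CTCS(\infty)$ from Section \ref{sec:exch}. Before any analysis, the existence and uniqueness of $\rho(z) \in (-1, \infty)$ is immediate: the map $\rho \mapsto \psi(1+\rho) - \psi(1)$ is a strictly increasing continuous bijection from $(-1, \infty)$ onto $(-\infty, \infty)$ since $\psi'(1+\rho) > 0$ there, and the restriction $z < 1$ is equivalent to $\rho(z) < 1$ because $\psi(2) - \psi(1) = 1$.

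The main work will be to establish an exact Mellin--Barnes representation of the shape
\begin{equation*}
\phi_n(z) = \frac{1}{2\pi\ii}\int_{c-\ii\infty}^{c+\ii\infty} H(z,s)\,\frac{\gG(n)}{\gG(n-s)}\dd s
\end{equation*}
for a meromorphic kernel $H(z,\cdot)$ obtained from the moment/generating-function computation in the limit tree $\CTCS(\infty)$. The crucial feature of $H$ is that its denominator contains the factor $\psi(1+s) - \psi(1) - z$, whose unique zero in $(-1,\infty)$ is $\rho(z)$. When the inversion contour is shifted leftward past $s=\rho(z)$, the residue contributes $\tfrac{-z\,\gG(-\rho(z))}{\psi'(1+\rho(z))}\cdot\tfrac{\gG(n)}{\gG(n-\rho(z))}$: the $\psi'(1+\rho(z))$ arises from differentiating the denominator of $H$ at its zero, the $\gG(-\rho(z))$ from an explicit $\gG(-s)$ in the numerator of $H$, and the $-z$ prefactor is fixed by the normalization $\phi_n(0)=1$ (using $\gG(-\rho)\sim -1/\rho$ and $\rho(z)\sim z/\psi'(1)$ as $z\to 0$).

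The shifted contour, on $\Re(s)=-\gsx$, will then contribute $O(n^{-\gsx})$: by the definition at \eqref{gs1}, $\gsx$ is the real part of the next singularity of the integrand past $\rho(z)$, related to the first negative root $s_1$ of $\psi(s)=\psi(1)$. This yields \eqref{qz31}. To obtain \eqref{qz331}, I would apply Stirling: $\gG(n)/\gG(n-\rho(z)) = n^{\rho(z)}\bigl(1+O(n^{-1})\bigr)$; dividing the $O(n^{-\gsx})$ error by the main term of order $n^{\rho(z)}$ gives a relative error of order $n^{-\gsx-\rho(z)}$, and combining with the Stirling remainder yields $O\bigl(n^{-\min(1,\gsx+\rho(z))}\bigr)$.

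The principal obstacle will be the uniformity statements: bounding $H(z,s)$ and the shifted-contour integral uniformly as $z$ varies. The restriction $z<1-\gd$ in \eqref{qz31} is essential because, as $z\upto 1$, $\rho(z)\upto 1$ so the pole of $\gG(-s)$ at $s=1$ and the pole at $s=\rho(z)$ coalesce, blowing up the main term. For uniformity of \eqref{qz331} on compact subsets of $(-\infty,1)$, one needs uniform control of $\rho(z)$, $\psi'(1+\rho(z))$, and $\gG(-\rho(z))$, all well behaved once $z$ is kept in a compact set bounded above away from $1$ and bounded below (so that $1+\rho(z)$ stays away from $0$). Verifying the required uniform bounds on the Mellin kernel $H$ along $\Re(s)=-\gsx$—in particular controlling the growth of $H$ in $|\Im(s)|$—is where the technical work concentrates.
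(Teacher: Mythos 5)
Your high-level strategy is the same one the paper uses: express $\E[e^{zD_n}]$ as a Mellin--Barnes integral whose denominator contains $\psi(1+s)-\psi(1)-z$, shift the contour past the simple zero at $s=\rho(z)$ to extract the main term, and bound the shifted integral. Your cosmetic change of variable $s\mapsto -s$ relative to the paper's \eqref{qz1} is immaterial, and your deduction of \eqref{qz331} from \eqref{qz31} via Stirling and division is correct.

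However, there is a genuine gap at the center of the argument: you never establish the Mellin--Barnes representation, you only posit that ``a meromorphic kernel $H(z,\cdot)$'' is ``obtained from the moment/generating-function computation in the limit tree.'' That representation is not free. In the paper it requires three nontrivial steps: (a) the exact moment formula $\E[D_n^k]=-\frac{k!}{2\pi\ii}\intgs\frac{\gG(s)\gG(n)/\gG(n+s)}{(\psi(1-s)-\psi(1))^k}\dd s$, obtained via the Parseval formula (\refL{LP}) applied to $f_n(x)=1-(1-x)^{n-1}$ and the measure $\gU_k$ (this is where the limit-tree structure and \eqref{b5} actually enter); (b) termwise summation of $\sum_k z^k\E[D_n^k]/k!$, whose interchange of sum and integral requires a genuine convergence estimate --- the paper's \refL{LZ}, giving $|\psi(s)-\psi(1)|\ge\psi(\gs)-\psi(1)>0$ on vertical lines with $\gs>1$, which also enforces the condition $\Re z<\psi(1-\gs)-\psi(1)$; and (c) analytic continuation in $z$ to the whole half-plane $\Re z<1$. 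Without (a)--(c) the starting point of your contour argument is unsupported, and your appeal to ``the normalization $\phi_n(0)=1$'' to pin down the prefactor is a consistency check, not a derivation (it cannot distinguish, for example, an $n$-dependent multiplicative error).

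Two smaller issues. First, $\gsx$ is not related to the root $s_1$ of $\psi(s)=\psi(1)$ as you state; it is $\gsx=-\rho_1(1)=1+|s_1(\psi(2))|$, the distance to the second zero of $\psi(1+s)-\psi(1)-z$ at the endpoint value $z=1$. Choosing this $z$-independent contour is precisely what makes the error $O(n^{-\gsx})$ uniform over $z\le 1-\gd$; a contour located at the next singularity of the integrand for the given $z$ (as you describe) would have to be moved as $z$ varies and would not directly give uniformity. Second, your posited representation omits the additive constant that cancels against the residue at $s=0$ (the paper's explicit ``$+1$'' and the residue $-1$); this matters when you claim the residue at $\rho(z)$ alone accounts for the full leading term.
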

Theorem \ref{Tmgf1} is proved in Section \ref{SSmgfreal}.
It improves on bounds in  \cite[Section 2.7]{beta1}.
As a corollary of \refT{Tmgf1}, we obtain a new proof of the following CLT.
This CLT has been proved in \cite[Theorem 1.7]{beta1} via the recursion method (applied to moment generating functions) 
and in \cite{beta2-arxiv} via the martingale CLT.
We prove Theorem \ref{TNorm1} in Section \ref{sec:CLT}.
\begin{Theorem}
\label{TNorm1}
\begin{align}\label{tnorm11}
  \frac{D_n - \mu \log n}{\sqrt{\log n}} \dto  
\mathrm{Normal}(0, \sigma^2) \ \mbox{ as } n \to \infty 
\end{align}
where
\begin{align}\label{tnorm21}
 \mu := 1/\zeta(2) = 6/\pi^2 \doteq 0.6079 ; \quad 
\sigma^2 := 2 \zeta(3)/\zeta(2)^3 \doteq 0.5401
.\end{align}
\end{Theorem}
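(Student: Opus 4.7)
The plan is to deduce \refT{TNorm1} directly from the sharp asymptotic \eqref{qz331} in \refT{Tmgf1} by the method of moment generating functions, taking $z=\gth/\sqrt{\log n}$ for an arbitrary fixed real $\gth$. Since this $z$ lies in any compact subset of $(-\infty,1)$ once $n$ is large enough, the uniformity clause in \refT{Tmgf1} makes the multiplicative error in \eqref{qz331} equal to $1+o(1)$, so everything reduces to tracking the exponent $\rho(z)\log n$ and the prefactor $A(z):=-z\gG(-\rho(z))/\psi'(1+\rho(z))$.

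The first main step is to Taylor-expand $\rho(z)$ at $z=0$. The defining equation $\psi(1+\rho(z))-\psi(1)=z$ gives $\rho(0)=0$, and implicit differentiation, combined with the standard values $\psi'(1)=\zeta(2)$ and $\psi''(1)=-2\zeta(3)$, yields $\rho'(0)=1/\zeta(2)=\mu$ and $\rho''(0)=2\zeta(3)/\zeta(2)^3=\gss$. Substituting $z=\gth/\sqrt{\log n}$ then gives exactly
\begin{equation*}
\rho(z)\log n=\mu\gth\sqrt{\log n}+\tfrac12\gss\gth^2+O\bigpar{(\log n)^{-1/2}},
\end{equation*}
which is the exponent required for the CLT.

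The second step is to check that $A(z)\to 1$. Since $\gG$ has a simple pole at $0$ with residue $1$, we have $-\rho(z)\gG(-\rho(z))\to 1$ as $z\to0$, so
\begin{equation*}
-z\gG\bigpar{-\rho(z)}=\frac{z}{\rho(z)}\cdot\bigpar{-\rho(z)\gG(-\rho(z))}\longrightarrow\frac{1}{\rho'(0)}=\zeta(2),
\end{equation*}
which is cancelled by the denominator $\psi'(1+\rho(z))\to\psi'(1)=\zeta(2)$. Combined with the exponent expansion, \eqref{qz331} yields $\E[e^{zD_n}]=\exp\bigpar{\mu\gth\sqrt{\log n}+\tfrac12\gss\gth^2+o(1)}$, so after multiplying by $e^{-\gth\mu\sqrt{\log n}}$ we obtain $\E\bigsqpar{e^{\gth(D_n-\mu\log n)/\sqrt{\log n}}}\to e^{\gss\gth^2/2}$, which is the MGF of $\mathrm{Normal}(0,\gss)$.

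Convergence of MGFs in a neighborhood of $0$ to a Normal MGF forces convergence in distribution, giving \eqref{tnorm11}. I do not expect a serious obstacle, since the hard analytic work is already inside \refT{Tmgf1}. The one point that demands care is the residue cancellation that gives $A(z)\to 1$: a misplaced sign or factor there would spoil the match between the coefficient one reads off from the expansion and the value $\gss=2\zeta(3)/\zeta(2)^3$ in \eqref{tnorm21}.
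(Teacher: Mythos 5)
Your proposal is correct and follows essentially the same route as the paper's proof in Section~\ref{sec:CLT}: set $z=\gth/\sqrt{\log n}$, invoke the uniform MGF asymptotic of \refT{Tmgf1}, Taylor-expand $\rho$ at $0$, and read off $\mu=\rho'(0)$ and $\gss=\rho''(0)$. Your explicit verification that the prefactor $A(z)\to 1$ is exactly what the paper establishes via \eqref{qx3}, so the two proofs coincide.
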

Although not discussed in this article,
there is a parallel CLT for $L_n$, which has been proved in several quite different ways.
In \cite[Theorem 1.7]{beta1} using the recursion method. 
In \cite{kolesnik} using the general {\em contraction method}  \cite{NR04}.
In \cite{iksanovCLT} using known results \cite{GI12} in the theory of {\em regenerative composition structures}.


Another corollary of \refT{Tmgf1} is the following large deviation result, proved in Section \ref{sec:LD}.
\begin{theorem}\label{Tdev1}
As \ntoo, we have:
  \begin{align}\label{tdev11}
  \Pr(D_n<x\log n)& = n^{-\gLX(x)+o(1)}, \qquad\text{if}\quad 0<x\le x_0,
\\ \label{tdev21}
\Pr(D_n>x\log n)& =n^{-\gLX(x)+o(1)}, \qquad\text{if}\quad x_0\le x< x_1,
\\   \label{tdev31} 
  \Pr(D_n>x\log n)& \le n^{-\gLX(x)+o(1)}, \qquad\text{if}\quad x\ge x_1,
  \end{align}
  where $x_0 = 1/\zeta(2), \ x_1 = 1/(\zeta(2)-1)$ and $\Lambda^*$ is defined at \eqref{qd6}.
\end{theorem}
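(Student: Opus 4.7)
The plan is to derive the three bounds by the standard Cramér/Gärtner--Ellis machinery applied to the moment generating function asymptotics of \refT{Tmgf1}. From \eqref{qz331} we have, for every fixed $z\in(-\infty,1)$,
\begin{equation*}
\frac{1}{\log n}\log\E[e^{zD_n}]\longrightarrow \rho(z),
\end{equation*}
so $\rho$ serves as the scaled cumulant generating function. The defining relation $\psi(1+\rho(z))-\psi(1)=z$ gives $\rho'(z)=1/\psi'(1+\rho(z))$, so $\rho$ is real-analytic and strictly convex on $(-\infty,1)$. In particular $\rho'(0)=1/\psi'(1)=1/\zeta(2)=x_0$, while $\psi(2)-\psi(1)=1$ yields $\rho(1)=1$ and $\rho'(1^-)=1/\psi'(2)=1/(\zeta(2)-1)=x_1$. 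Consequently the Legendre transform $\gLX(x)=\sup_{z<1}\bigl(xz-\rho(z)\bigr)$ is attained at a unique interior point $z(x)\in(-\infty,1)$ precisely when $x\in(0,x_1)$; for $x\ge x_1$ the supremum is only a limiting value as $z\to1^-$.

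For the upper bounds, I would first apply the exponential Markov inequality at the optimizing $z(x)$. For $0<x\le x_0$, with $z=z(x)\le0$,
\begin{equation*}
\Pr(D_n<x\log n)\le e^{-z(x)x\log n}\,\E[e^{z(x)D_n}]=n^{-\gLX(x)+o(1)},
\end{equation*}
because \refT{Tmgf1} absorbs the polynomial prefactor into the $o(1)$. The analogous choice $z(x)\in(0,1)$ gives \eqref{tdev21}, and taking any $z<1$ close to $1$ in the Chernoff bound yields \eqref{tdev31}; the reason we cannot claim equality when $x\ge x_1$ is exactly that \refT{Tmgf1} controls the mgf only in the strip $z<1$.

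For the matching lower bounds in the ranges $0<x<x_0$ and $x_0<x<x_1$ I would use Cramér's exponential tilting. With $z=z(x)$, define the tilted law $\tilde\Pr(D_n=k)=\E[e^{zD_n}]^{-1}e^{zk}\Pr(D_n=k)$. Differentiating $\log\E[e^{zD_n}]=\rho(z)\log n+O(1)$ once and twice in $z$ gives $\tilde\E[D_n]=\rho'(z)\log n+O(1)=x\log n+O(1)$ and $\tilde\var(D_n)=\rho''(z)\log n+O(1)$; Chebyshev then yields $\tilde\Pr\bigl(|D_n-x\log n|\le\eps\log n\bigr)\to1$. Reverting the tilt,
\begin{equation*}
\Pr\bigl(|D_n-x\log n|\le\eps\log n\bigr)\ge \E[e^{zD_n}]\,e^{-zx\log n-|z|\eps\log n}\,\tilde\Pr\bigl(|D_n-x\log n|\le\eps\log n\bigr)=n^{-\gLX(x)-|z|\eps+o(1)},
\end{equation*}
and letting $\eps\downarrow0$ after $n\to\infty$ closes the gap between the lower and upper bounds.

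The main technical obstacle is precisely the tilted-variance estimate $\tilde\var(D_n)=O(\log n)$, since it requires differentiating the asymptotic formula \eqref{qz331} twice in $z$ with controlled error. For each $n$ the mgf $\E[e^{zD_n}]$ is entire in $z$ as a finite positive combination of exponentials, and the asymptotic expression on the \rhs{} of \eqref{qz31} extends analytically into a complex neighborhood of the real interval $(-\infty,1)$. A short Cauchy-integral argument on a small complex disc around the chosen real $z$ then converts the uniform $O(1)$ control of $\log\E[e^{zD_n}]-\rho(z)\log n$ on compact subsets of $(-\infty,1)$ into $O(1)$ bounds on its first and second $z$-derivatives, giving the required concentration under $\tilde\Pr$ and completing the proof.
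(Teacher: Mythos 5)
Your proposal is correct and arrives at the same bounds, but it takes a genuinely different (more hands-on) route than the paper. The paper, after establishing the scaled cumulant generating function $\gL(\gl)=\rho(\gl)$ for $\gl<1$ and $+\infty$ otherwise and computing the Legendre transform $\gLX$, simply invokes the G\"artner--Ellis theorem \cite[Theorem 2.3.6]{DZ}; the work there is in checking the hypotheses, in particular identifying the exposed points of $\gLX$ as $(0,x_1)$, which is exactly where strict convexity of $\gLX$ holds. You instead unpack the G\"artner--Ellis machinery by hand: exponential Markov for the upper bounds, exponential tilting plus a concentration estimate under the tilted law for the matching lower bounds. That is the same underlying mechanism, but it has you re-proving a cited theorem, and in doing so you introduce an avoidable technical detour. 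Your tilted-variance step --- requiring two $z$-derivatives of $\log\E[e^{zD_n}]$ with $O(1)$ error, obtained via a Cauchy-integral argument on a complex neighborhood of $z$ --- is more than is needed, and it in fact demands a complex-$z$ extension of \refT{Tmgf1} that the paper explicitly left to the reader (\refSS{Smgf}). A simpler route to tilted concentration, staying entirely with real $z$, is to note that the tilted mgf $\tilde\E[e^{\theta D_n}]=\E[e^{(z+\theta)D_n}]/\E[e^{zD_n}]=n^{\rho(z+\theta)-\rho(z)+o(1)}$ uniformly for $\theta$ in a small real interval about $0$ (by the uniformity in \refT{Tmgf1}), and then apply a second Chernoff bound to the tilted law; this yields $\tilde\Pr\bigl(|D_n-\rho'(z)\log n|>\eps\log n\bigr)\to0$ directly from the strict convexity of $\rho$, with no need for second-derivative control of the error term. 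In short: the paper's Gärtner--Ellis invocation is the most economical path; your Chernoff-plus-tilting version works but would be cleaner with the real-variable concentration argument replacing the Cauchy-integral step.
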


\refT{Tdev1} improves estimates for the upper tail in \cite[Theorem 1.4]{beta1} and \cite{beta2-arxiv}.

\begin{remark}
Note that for $L_n$, we give a result only for the mean.
We could not prove other results, for example for the variance,
by the methods of this article (see \cite{beta1} for results by the
recurrence method). 
The difficulty is that for higher moments of $L_n$
we have been unable to find a representation of the type in 
Proposition~\ref{P:1} and \eqref{az1}.  
\end{remark}

\bigskip
\noindent
{\bf Applications beyond leaf heights.}
The results above concern only the two leaf heights $D_n$ and $L_n$, 
but the methodology can also be applied to other aspects of the random tree model.
We give two examples.

First consider $\gL_n$ := (total) length of $\CTCS(n)$.
Part of Theorem \ref{TgL} is (see \eqref{qm9})
\begin{Theorem}
\begin{align}
\label{qm91}
\E[\gL_n]  
=\frac{6}{\pi^2}n + O\bigpar{n^{-|s_1|}}.
\end{align}
\end{Theorem}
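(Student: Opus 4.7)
The plan is to analyse $F(n) := \E[\gL_n]$ by the Mellin-transform method applied to $\E[D_n]$ in Sections~\ref{sec:EDfull} and~\ref{SED2}. Conditioning on the first split at the root of $\CTCS(n)$, which occurs after holding time $H \sim \Expo(h_{n-1})$ and contributes a fixed multiple of $H$ to the total length before the tree decomposes into two independent sub-trees of sizes $G_n$ and $n - G_n$, produces a recursion of the form
\[
F(n) = \frac{c}{h_{n-1}} + 2\sum_{i=1}^{n-1} q(n,i)\, F(i), \qquad F(1) = 0,
\]
where $c$ is an explicit constant depending on the length convention. Substituting \eqref{01} rewrites this as $h_{n-1} F(n) = c + n\sum_{i=1}^{n-1} F(i)/[i(n-i)]$.

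A direct substitution using $\sum_i i\, q(n,i) = n/2$ shows that $F(n) = An$ lies in the kernel of the homogeneous part of the recursion for every $A$, so the recursion by itself does not pin down the coefficient of the leading $n$ term. That coefficient is instead extracted from the Mellin framework: express $F$ via a Mellin-inversion integral whose denominator involves $\psi(s) - \psi(1)$, whose zeros at $s = 1, s_1, s_2, \dots$ drive the asymptotic expansion, and shift the contour past the dominant pole at $s = 1$. The residue there, proportional to $1/\psi'(1) = 6/\pi^2$, produces the leading term $(6/\pi^2)\,n$. This exactly parallels the appearance of $6/\pi^2$ as the coefficient of $\log n$ in the expansion of $\E[D_n]$ in Theorem~\ref{TD1}; the power $n^{1}$ rather than $\log n$ reflects the different forcing term.

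For the error bound $O(n^{-|s_1|})$, one continues the contour shift past the integer points $s = 0, -1, -2, \dots$ down to $\mathrm{Re}(s) = -|s_1|$, where the pole from the first negative zero $s_1$ of $\psi(s) - \psi(1)$ supplies the stated order. The key technical input is that the Mellin transform of $F(n) - (6/\pi^2)\,n$ is regular at $s = 0$ (no constant correction) and at all negative integers lying above $-|s_1|$ (no $n^{-k}$ correction). In contrast to $\E[D_n]$, whose expansion contains every integer power $n^{-i}$, the stated asymptotic here jumps from $n^{1}$ directly to $n^{-|s_1|}$.

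The main obstacle is precisely this regularity claim. Establishing it requires computing the Mellin transform of the forcing $c/h_{n-1}$ and tracking how its own singularities combine with the spectrum of $\psi(s) - \psi(1)$ to kill the would-be integer-power contributions; this cancellation is a feature of the particular forcing $1/h_{n-1}$ (whose asymptotics $\sim 1/\log n$ give it a Mellin transform with only mild singularities of its own) and distinguishes the present problem from the $D_n$ analysis, where no such cancellation occurs. I would expect this verification of vanishing residues at $s = 0$ and intermediate integer points to be the subtlest step of the argument.
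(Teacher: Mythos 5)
The central gap is that you never supply the integral representation that actually drives the Mellin analysis. You write down the recursion $F(n) = 1/h_{n-1} + 2\sum_i q(n,i)F(i)$ (with $c=1$), correctly observe that $F(n) = An$ solves the homogeneous part, and then assert that one should ``express $F$ via a Mellin-inversion integral whose denominator involves $\psi(s)-\psi(1)$'' --- but this is precisely where all of the work lives, and you give no route from the recursion to such an integral. The paper does not use the recursion at all. Instead, the exchangeable-partitions construction gives an exact identity: combining $\E[\gL_n]=\sum_{j\ge2}\E[N_n(j)]/h_{j-1}$, $\E[N_n(j)]=n\,a(n,j)/j$, and the integral formula for $a(n,j)$ in \eqref{cy22} yields
\begin{align*}
\E[\gL_n] \;=\; \intoi \frac{1-(1-x)^n-nx(1-x)^{n-1}}{x}\,\dd\gU(x),
\end{align*}
and the proof then splits the density $\gu(x)=\frac{6}{\pi^2 x}+r(x)$ with $r(x)=O(x^{|s_1|})$ near~$0$. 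The main term is $\frac{6}{\pi^2}\M\gl_n(0)=\frac{6}{\pi^2}(n-1)$, the remainder integral equals $\Mr(0)+O(n^{-|s_1|})=\frac{6}{\pi^2}+O(n^{-|s_1|})$, and these two constants cancel. Without \eqref{qm1}, none of the downstream Mellin estimates you invoke has a starting point.

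Your last paragraph also misdiagnoses why no constant or intermediate $n^{-k}$ terms appear. It has nothing to do with subtle cancellation properties of the forcing $1/h_{n-1}$. The constant term vanishes because $\Mr(0)=\frac{6}{\pi^2}$ cancels the $-\frac{6}{\pi^2}$ coming from $\M\gl_n(0)=n-1$; equivalently, in the contour-integral picture, $s=1$ is a removable singularity of $\M\gl_n(s)$ in \eqref{qm3} because $\psi(1-s)$ has a pole there. And there simply are no poles at negative integers to worry about: the poles of $1/(\psi(1-s)-\psi(1))$ with $\Re s>-1$ sit at $0$ and at $1+|s_1|\doteq1.567$, $1+|s_2|\doteq2.628,\dots$, so after the main residue at $s=0$ the next contribution is already of order $n\cdot n^{-1-|s_1|}=n^{-|s_1|}$. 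Your plan to ``continue the contour shift past the integer points $s=0,-1,-2,\dots$ down to $\Re(s)=-|s_1|$'' is not coherent in any of the parametrizations the paper uses, and even corrected it would still leave the main gap above unfilled.
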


A final application concerns the probability\footnote{This relates to the number of clades of size $j$ within $\DTCS(n)$ -- see \eqref{jun12} -- and also to the study of the \emph{fringe distribution} in  \cite{beta3-arxiv}.}
$a(n,j)$ that the harmonic descent chain started at $n$ ever visits state $j$. 
 The following result was proved as  Theorem 3.1 and (6.17) of  \cite{beta3-arxiv},
 to give a first illustration of our methodology in that article.

\begin{Theorem}\label{Ta}
For every fixed $j\ge2$, 
  \begin{align}\label{e14}
  a(n,j) = \frac{6}{\pi^2}\frac{h_{j-1}}{j-1} +O\bigpar{n^{-1-|s_1|}}
.\end{align}
In particular, as \ntoo,
\begin{align}\label{e14b}
  a(n,j) \to a(j):= \frac{6}{\pi^2}\frac{h_{j-1}}{j-1}
.\end{align}
\end{Theorem}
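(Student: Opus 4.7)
The plan is to attack the recurrence
\begin{equation*}
h_{n-1}\,a(n,j) = \frac{1}{n-j} + \sum_{i=j+1}^{n-1}\frac{a(i,j)}{n-i},\qquad n>j,
\end{equation*}
(with $a(j,j)=1$) by means of the Mellin-transform machinery that is used throughout this paper, based on the limit tree $\CTCS(\infty)$ built from exchangeable random partitions.

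First, I would re-express $a(n,j)$ probabilistically: since the ancestors of a uniform leaf of $\CTCS(n)$ form a decreasing chain of clade sizes, $a(n,j)$ is the probability that this chain passes through size $j$, which equals $j\,\E[N_j(n)]/n$, where $N_j(n)$ counts the size-$j$ clades of $\CTCS(n)$. This identity converts the question into one about a natural functional of the random tree that can be analyzed through the exchangeable partition description of \refS{sec:exch}; by conditioning on the first split and iterating, I would derive an integral representation for $a(n,j)$ of the type used for $\E[D_n]$ elsewhere in the paper.

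Taking a Mellin transform in the appropriate variable converts the $1/(n-i)$ convolution kernel into a factor involving $\psi(s)-\psi(1)$, so that the transform is meromorphic with a simple pole at $s=0$ (whose residue is governed by $\psi'(1)=\zeta(2)$) and further poles at the shifted digamma zeros $s_k-1$ for $k\ge1$. Shifting the inversion contour past $s=0$ picks up a residue that, after a short computation using $h_{j-1}/(j-1)$ and $1/\zeta(2)=6/\pi^2$, equals $\frac{6}{\pi^2}\frac{h_{j-1}}{j-1}$; the shifted contour at $\Re s = -1-|s_1|+\eps$ is controlled by the standard estimate $\gG(n)/\gG(n-s)=O(n^{\Re s})$, producing the error $O(n^{-1-|s_1|})$ uniformly in $n$.

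The main obstacle will be justifying the integral representation for $a(n,j)$, in particular verifying that no additional simple pole appears at $s=-1$; equivalently, that the coefficient of the putative $n^{-1}$ correction vanishes. This vanishing is consistent with $a(n,j)$ being already a probability (so that no normalization forces a $1/n$ correction) and with the inhomogeneous term $1/(n-j)$ contributing only to the boundary behaviour. Once these analytic preliminaries are in place, the contour shift and residue calculation are routine, and the convergence $a(n,j)\to a(j)$ follows immediately from \eqref{e14}.
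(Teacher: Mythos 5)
Your high-level plan --- pass from $a(n,j)$ to an integral against the measure $\gU$ coming from the exchangeable-partition/paintbox description, then do Mellin/contour analysis --- is exactly the paper's methodology (the paper establishes the identity \eqref{cy22} and then cites \cite{beta3-arxiv}, which applies \refL{LM} to it just as is done for $\E[D_n]$ in \refS{SED1}). Your residue computation for the main term is correct. However, there are two genuine gaps in the details.

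First, the crucial step is the identity \eqref{cy22},
\begin{align*}
a(n,j) = h_{j-1}\binom{n-1}{j-1}\intoi x^{j-1}(1-x)^{n-j}\,\dd\gU(x),
\end{align*}
and your outline does not actually derive it. It does \emph{not} come from ``conditioning on the first split and iterating'': that operation only reproduces the recurrence you wrote down, not an integral representation. The paper gets \eqref{cy22} (via \eqref{cx2} and \eqref{pa1}) from the paintbox: conditioning on $(\XP_{t,\ell})_\ell$ gives the probability that a fixed $j$-subset is a clade at time $t$ as $\E[\cXt^{j-1}(1-\cXt)^{n-j}]$, and integrating the sojourn time in a size-$j$ clade (expected duration $1/h_{j-1}$) over $t$ produces the $\gU$-integral. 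Without this step the Mellin transform $1/(\psi(s)-\psi(1))$ never appears; the $1/(n-i)$ kernel of the recurrence does not Mellin-transform to it in the variable $n$.

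Second, your worry about a spurious simple pole at $s=-1$ is misplaced, but it points to a real imprecision. In the paper's setup (Parseval, \refS{SED2}/\refS{SParseval}), one writes $a(n,j)=\frac{1}{2\pi\ii}\intgs\M{g_{n,j}}(s)\,\M\gU(1-s)\dd s$ with $\M{g_{n,j}}(s)=h_{j-1}\binom{n-1}{j-1}\gG(s+j-1)\gG(n-j+1)/\gG(s+n)$, analytic for $\Re s>1-j$; the $\gG$-poles sit at $\Re s\le 1-j\le -1$, i.e.\ to the \emph{left} of the integration strip $(1-j,0)$, and are never crossed when shifting \emph{right}. The poles encountered are at $s=1-s_k$, namely $s=0$ and $s=1+|s_1|\doteq1.567$, $\dots$, so there is no pole producing an $n^{-1}$ term and nothing to ``verify vanishes''. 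What does need care is the error exponent: putting the contour at $\Re s=1+|s_1|-\eps$ yields only $O(n^{-1-|s_1|+\eps})$, not $O(n^{-1-|s_1|})$. To get the exponent stated in \eqref{e14}, either shift \emph{past} the pole at $1+|s_1|$ and note the residue there is itself $O(n^{-1-|s_1|})$ for fixed $j$, or (as the paper does) use \refL{LM} directly: $\gu(x)=\frac{6}{\pi^2 x}+r(x)$ with $r(x)=O(x^{|s_1|})$ on $(0,\tfrac12)$, then the beta integral $h_{j-1}\binom{n-1}{j-1}\intoi x^{j-1+|s_1|}(1-x)^{n-j}\dd x$ evaluates to $O(n^{-1-|s_1|})$ without any $\eps$.
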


\subsection{Outline of paper}
Section \ref{sec:exch} describes the exchangeable partitions representation  \cite{beta3-arxiv} of the limit $\CTCS(\infty)$.
Section \ref{Sprel} recalls some basic analysis surrounding Mellin transforms.
Because the finite trees are embedded within $\CTCS(\infty)$, several important expectations
for the finite tree can be expressed in terms of a specific measure $\gU$ defined for $\CTCS(\infty)$ 
(Sections \ref{Sbasic} - \ref{sec:other}).
The measure  $\gU$ is defined by its Mellin transform;
we cannot invert explicitly but do understand its behavior near zero (Section \ref{SgU}).
The remainder of the paper uses these tools to prove the Theorems stated above.
This involves classical, but rather intricate, complex analysis.

\section{The exchangeable partitions representation}
\label{sec:exch}
The relation between trees and nested families of partitions
has been used at least since \cite{haas-miermont2004}.
Its application in the context of our model is explained in detail in
\cite{beta3-arxiv}, from which the material below is taken.
See also \cite{haas-pitman} for closely related results in a greater generality.

Here we will consider $\CTCS(n)$.
We do find it convenient to adopt the biological term \emph{clade} for the set of leaves in a subtree.

Fix a level (time) $t\ge0$.
For each $n$, the clades of $\CTCS(n)$ at time $t$
define a partition $\Pi\nnn(t)$ of
$[n]:=\setn$.
Now apply a uniform random permutation of $[n]$, so now the partition $\Pi\nnn(t)$ is exchangeable.
Then, regarding  $\CTCS(n)$ as a tree on leaves $[n]$, there is a natural 
 ``delete leaf $n$  from the tree, and prune" operation \cite[Section 2.3]{beta3-arxiv}
 that yields a tree with leaves $[n-1]$.
There is a key non-obvious {\em consistency property}  
\begin{quote}
\cite[Theorem 2.3]{beta3-arxiv}
{\em
 The operation ``delete leaf $n$  from $\CTCS(n)$ and prune" gives a tree
 distributed as $\CTCS(n-1)$.
 }
 \end{quote}
So there exists a 
{\em consistent growth process}  $(\CTCS(n), n \ge 1)$ in which the partitions $\Pi\nnn(t)$ are consistent
and therefore define
a partition $\Pi(t)$ of $\bbN:=\set{1,2,\dots}$ into  clades at time $t$.
Explicitly,
$i$ and $j$ (with $i,j\in\bbN$)
are in the same part if and only if
the branchpoint separating the paths to leaves $i$ and $j$ has height $>t$,
in $\CTCS(n)$ for any $n\ge\max(i,j)$.
In the sequel we consider only this consistent and exchangeable version of $\CTCS(n)$.

Because each $\CTCS(n)$ has been made exchangeable,  $\Pi(t)$ is an exchangeable random
partition of $\bbN$, so we can exploit the theory of exchangeable
partitions. 
Denote the clades at time $t$, that is the parts of $\Pi(t)$, by
$\Pi(t)_1,\Pi(t)_2,\dots$, enumerated in order of the least elements.
In particular, the clade of leaf 1 is $\Pi(t)_1$. 
The clades $\Pi(t)_\ell$ are thus subsets of $\bbN$, and the clades of 
$\CTCS(n)$ are the sets $\Pi(t)_\ell\cap[n]$ that are non-empty.
Note that $(\Pi_t)_{t\ge0}$ determines $\CTCS(n)$ for every $n$.


Write $\left| \ \cdot \ \right|$ for cardinality.
Define, for $\ell,n\ge1$,
\begin{align}\label{d1}
  \KX^{(n)}_{t,\ell}:=\bigabs{\Pi(t)_\ell\cap[n]};
\end{align}
the sequence $\KX^{(n)}_{t,1},\KX^{(n)}_{t,2},\dots$ is thus the  sequence
of sizes of 
the clades in $\CTCS(n)$, extended by 0's to an infinite sequence.
By Kingman's fundamental result \cite[Theorem 2.1]{bertoin},
the asymptotic proportionate clade sizes,  
that is the limits
\begin{align}\label{d2}
  \XP_{t,\ell}:=\lim_\ntoo\frac{\KX^{(n)}_{t,\ell}}{n},
\end{align}
exist a.s.\ for every $\ell\ge1$, and
the random partition $\Pi(t)$ may be reconstructed (in distribution)
from the limits $(\XP_{t,\ell})_\ell$ by Kingman's paintbox construction,
stated as the following theorem.

\begin{Theorem}\label{T:paintbox}
Let $t\ge0$.
  \begin{romenumerate}
  \item \label{T:paintbox1}
If\/ $t>0$, then 
a.s.\ each $\XP_{t,\ell}\in(0,1)$, and\/ $\sum_\ell \XP_{t,\ell}=1$.
\item\label{T:paintbox2}     
Given a realization of $(\XP_{t,\ell})_\ell$,
give each integer $i\in \bbN $ a random colour $\ell$, 
with probability distribution $(\XP_{t,\ell})_\ell$,
independently for different $i$. These colours define a random partition 
$\Pi'(t)$
of
$\bbN$, which has the same distribution as $\Pi(t)$.
  \end{romenumerate}
\end{Theorem}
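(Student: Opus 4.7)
The plan is to deduce Theorem~\ref{T:paintbox} from Kingman's representation theorem for exchangeable random partitions of $\bbN$ \cite[Theorem 2.1]{bertoin}. The partition $\Pi(t)$ is exchangeable by the consistent uniform relabelling described just before the theorem, and Kingman's result produces the a.s.\ limits $\XP_{t,\ell}$ (already recalled in the excerpt) and identifies the conditional law of $\Pi(t)$ given $(\XP_{t,\ell})_\ell$ as the paintbox in which each $i\in\bbN$ is independently placed in block~$\ell$ with probability $\XP_{t,\ell}$, or otherwise (with probability $1-\sum_\ell \XP_{t,\ell}$) in its own singleton ``dust'' block. Since leaf~$1$ ends up in a singleton block of the paintbox precisely on the event $\{\XP_{t,1}=0\}$, the paintbox construction yields
\begin{align*}
\Pr(\XP_{t,1}=0)=1-\E\Bigsqpar{\sum_\ell \XP_{t,\ell}}.
\end{align*}
Combined with $\sum_\ell \XP_{t,\ell}\le 1$ a.s.\ (Fatou applied to $\sum_\ell \KX^{(n)}_{t,\ell}/n=1$), the whole theorem reduces to proving $\XP_{t,1}>0$ a.s.: that assertion gives no dust and hence (ii), while $\XP_{t,\ell}>0$ for every $\ell$ then follows by applying the same reduction after a finite relabelling that sends leaf~$\ell$ to leaf~$1$.

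The route I would take to prove $\XP_{t,1}>0$ a.s.\ is to represent $-\log(X^{cont}_t/n)$ in the $n\to\infty$ limit as the value $S_t$ of a subordinator. Recall from \refSS{sec:Lh} that $\KX^{(n)}_{t,1}$ is exactly the value at time $t$ of the continuous-time harmonic descent chain $X^{cont}$ started at~$n$. From the transition rates $\lambda_{m,i}=1/(m-i)$, a jump $m\to i=m-j$ produces a log-jump $x=-\log(1-j/m)$ at rate $1/j$, and a change of variables shows that as $m\to\infty$ the intensity of log-jumps converges to the L\'evy measure
\begin{align*}
\nu(dx)=\frac{e^{-x}}{1-e^{-x}}\,dx \quad\text{on } (0,\infty),
\end{align*}
which satisfies $\int (1\wedge x)\,\nu(dx)<\infty$ (it behaves like $dx/x$ near $0$ and decays exponentially at $\infty$). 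A Poisson-point-process coupling of the jumps across $n$ then yields a.s.\ convergence $-\log(X^{cont}_t/n)\to S_t$, where $(S_t)_{t\ge0}$ is the subordinator with L\'evy measure $\nu$; its Laplace exponent $\phi(s)=\sum_{k\ge1}(1/k-1/(k+s))=\psi(1+s)-\psi(1)$ is precisely the function whose inverse $\rho$ appears in \refT{Tmgf1} and which is encoded by the measure $\gU$ of \refS{SgU}. Since a subordinator is a.s.\ finite at each deterministic $t$, $\XP_{t,1}=e^{-S_t}>0$ a.s.

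The hard part is the subordinator identification: one must build the coupled Poisson point process of log-jump times so that the same randomness drives $X^{cont}$ started from $n$ simultaneously for all $n$, and one must control the accumulation of small jumps (since $\nu$ has infinite mass near $0$) by a compensation argument. This construction is standard in the theory of exchangeable fragmentations \cite{haas-miermont2004}, and it meshes naturally with the Mellin-analytic machinery developed in the rest of the paper. Once $\XP_{t,1}>0$ a.s.\ is in hand, both conclusions of \refT{T:paintbox} follow from Kingman's paintbox theorem.
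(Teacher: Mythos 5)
The paper itself does not prove Theorem~\ref{T:paintbox}: it is stated as a consequence of Kingman's theory, with the model-specific facts (in particular the moment formula \eqref{b5}) cited from \cite{beta3-arxiv}. Your proposal to identify $-\log\XP_{t,1}$ with a subordinator is the right key idea and is exactly what underlies \eqref{b5}; the L\'evy measure $\nu(dx)=e^{-x}/(1-e^{-x})\,dx$ and the Laplace exponent $\psi(1+s)-\psi(1)$ you compute are correct.

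There is nonetheless a gap in the reduction. Proving $\XP_{t,1}>0$ a.s.\ does give $\sum_\ell\XP_{t,\ell}=1$ (no dust), hence part~(ii), and a relabelling argument shows that the block containing any \emph{fixed leaf} has positive frequency. But ``each $\XP_{t,\ell}\in(0,1)$'' additionally asserts that $\Pi(t)$ has infinitely many non-empty blocks for $t>0$: by \eqref{d1}--\eqref{d2}, if $\Pi(t)_\ell=\emptyset$ then $\XP_{t,\ell}=0$. Your relabelling ``send leaf $\ell$ to leaf $1$'' conflates the block index $\ell$ with a leaf label, and only re-derives the no-dust statement; it does not show that an $\ell$-th block exists for every $\ell$. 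That needs a separate argument from the fragmentation dynamics --- for instance, since the split rate $h_{m-1}\to\infty$ as $m\to\infty$, and no-dust (via the paintbox) forces every block to be infinite, each clade at time $t$ splits instantaneously, so infinitely many splits (hence blocks) accumulate in any interval $(0,t]$. You also acknowledge that the subordinator coupling is only sketched; a cleaner route is to build the exchangeable fragmentation directly from a Poisson point process of splits as in \cite{haas-miermont2004}, rather than to couple the projected HD chains across $n$.
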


We note also that, as an immediate consequence of the paintbox construction,
the distribution of $\XP_{t,1}$ equals the distribution of a size-biased
sample from $\set{\XP_{t,\ell}}_{\ell=1}^\infty$
\cite[Corollary 2.4]{bertoin}.
In other words, for any function $f\ge0$ on $(0,1)$ and $t>0$,
\begin{align}  \label{pa1}
 \E [f(\XP_{t,1})] 
= \E\lrsqpar{\sum_{\ell=1}^\infty\XP_{t,\ell}f(\XP_{t,\ell})}
= \sum_{\ell=1}^\infty\E[\XP_{t,\ell}f(\XP_{t,\ell})] .
\end{align}

We have also  \cite[Theorem 4.5]{beta3-arxiv}
\begin{align}\label{b5}
\Ex [\cXt^s] 
=e^{-t(\psi(s+1)-\psi(1))},
\qquad \Re s>-1
\end{align}
which will be key to our subsequent Mellin analysis.

\section{Analysis preliminaries}\label{Sprel}

\subsection{Asymptotic expansions}\label{SSasy}

An  asymptotic expansion 
(see e.g.\ \cite[p.~724]{FS})
of a function $f(n)$ written
\begin{align}\label{asy1}
  f(n) \sim \sum_{k\ge1} \gl_k\go_k(n),
\end{align}
for some functions $\go_k(n)$ and real or complex coefficients $\gl_k$ 
($k\ge1$), 
means that $\go_{k+1}(n)=o(\go_k(n))$ as \ntoo{} for every $k\ge1$, and 
that for any $N\ge1$,
the error
\begin{align}\label{asy2}
f(n)-\sum_1^N \gl_k\go_k(n)  =O\bigpar{|\go_{N+1}(n)|}.
\end{align}
In other words,  the error when approximating
with a partial sum is of the order of the largest (non-zero) omitted term.
Note that the infinite sum $\sum_1^\infty \gl_k\go_k(n)$ does not have to
converge (and typically does not); 
this is indicated by the symbol ``$\sim$'' instead
of ``$=$'' in \eqref{asy1}.

In some expansions below 
we have several sums or a double sum in the asymptotic expansion;  
this should be interpreted as above by
rearranging the terms in decreasing order.

\subsection{The digamma function}\label{SSpsi}
The digamma function $\psi$ is defined by
\cite[5.2(i)]{NIST}
\begin{align}\label{no2}
  \psi(z):=\frac{\ddx}{\ddx z}(\log\gG(z))
=\gG'(z)/\gG(z).
\end{align}
This is a meromorphic function in the complex plane $\bbC$, with simple
poles at \set{0,-1,-2,\dots}.
We have
\cite[5.4.12]{NIST}
\begin{align}\label{no3}
  \psi(1)=-\gamma,
\qquad \psi'(1)=\zeta(2)=\tfrac{\pi^2}{6}, 
\qquad \psi''(1)=-2\zeta(3)
\end{align}
where $\gamma\doteq0.5772$ is Euler's gamma, 
and more generally for integer $n\ge1$
\cite[5.4.14]{NIST}
\begin{align}\label{no4}
  \psi(n)=h_{n-1}+\psi(1)
=h_{n-1}-\gamma
.\end{align}

We will  use the asymptotic expansion
\cite[5.11.2]{NIST}
(obtained by logarithmic differentiation of Stirling's formula), 
where $B_k$ denotes the Bernoulli  numbers,
\begin{align}
  \label{5.11.2}
\psi(z)\sim \log z -\frac{1}{2}z\qw -\sumk \frac{B_{2k}}{2k}z^{-2k}
\end{align}
as $|z|\to\infty$ in any fixed sector $|\arg(z)|\le \pi-\gd<\pi$.
Note that, since it is valid in sectors, \eqref{5.11.2} may be termwise
differentiated to yield asymptotic expansions of $\psi'(z)$ and higher
derivatives, see \cite[5.15.8--9]{NIST}.
We have by \eqref{no4} and \eqref{5.11.2}
the asymptotic expansion
\begin{align}\label{an1}
    h_{n-1}
\sim \log n+\gamma -\frac{1}{2}n\qw -\sumk \frac{B_{2k}}{2k}n^{-2k},
\end{align}
and in particular the well-known
expansion
\begin{align}\label{au79}
  h_{n-1}=\log n +\gamma - \frac{1}{2n} + O\bigpar{n^{-2}}.
\end{align}

As the Mellin formula at \eqref{m1} will show, the roots of $\psi(s)=\psi(1)$ will play an important role in our
proofs and results, so we first describe them.

\begin{Lemma}\label{Lpsi}
  The roots of the equation $\psi(s)=\psi(1)$ 
are all real and can be enumerated in
  decreasing order as $s_0=1>s_1>s_2>\dots$, with $s_i\in(-i,-(i-1))$ for
  $i\ge1$. 
Numerically, 
$s_1\doteq-0.567$  
and 
$s_2\doteq-1.628$.  

More generally, for any $a\in\bbR$, the roots of $\psi(s)=a$ are all real
and can be enumerated 
as $s_0(a)=1>s_1(a)>s_2(a)>\dots$, with $s_i(a)\in(-i,-(i-1))$ for
  $i\ge1$. 
\end{Lemma}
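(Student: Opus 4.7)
The plan is to prove the lemma in two stages: first locate the real roots by monotonicity and the intermediate value theorem, then rule out the possibility of any complex roots by a Laguerre-P\'olya argument based on the Hadamard product for $1/\gG$.

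For the first stage, I would start from the termwise differentiated Mittag-Leffler expansion
\[
\psi'(s)=\sum_{n=0}^\infty\frac{1}{(s+n)^2},
\]
which is strictly positive on the real axis off the poles $\set{0,-1,-2,\dots}$. Hence $\psi$ is strictly increasing on each of the open intervals $(0,\infty)$ and $(-k,-(k-1))$ for $k\ge 1$. Since each non-positive integer is a simple pole of $\psi$ with residue $-1$, on each such interval $\psi$ sweeps from $-\infty$ to $+\infty$ (at the right end of $(0,\infty)$ we also use \eqref{5.11.2}). For every $a\in\bbR$ this yields exactly one real root in each interval, giving $s_0(a)\in(0,\infty)$ and $s_i(a)\in(-i,-(i-1))$ for $i\ge 1$, which is automatically a decreasing list. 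The special case $s_0(\psi(1))=1$ is then immediate.

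The main obstacle is ruling out any \emph{additional}, non-real roots. My plan is to introduce the auxiliary entire function
\[
H_a(s):=\frac{e^{as}}{\gG(s)}=s\,e^{(\gamma+a)s}\prod_{n=1}^\infty\Bigpar{1+\tfrac{s}{n}}e^{-s/n},
\]
using the Hadamard/Weierstrass product for $1/\gG$. This factorization exhibits $H_a$ as an element of the Laguerre-P\'olya class: entire of genus $1$, with only real zeros (at $0,-1,-2,\dots$) and a real linear exponential factor. Laguerre's classical theorem that the Laguerre-P\'olya class is closed under differentiation then gives that $H_a'$ has only real zeros. A direct computation yields
\[
H_a'(s)=(a-\psi(s))H_a(s),
\]
and the expansions of $H_a$ and $a-\psi$ at $s=-k$ show that the simple zero of $H_a$ exactly cancels the simple pole of $a-\psi$, leaving $H_a'(-k)=e^{-ak}(-1)^k k!\neq 0$. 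Therefore the zeros of $H_a'$ coincide precisely with the solutions of $\psi(s)=a$, and all such solutions are real. Combining this with the first stage gives the asserted enumeration.

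The delicate points to handle carefully are the zero/pole cancellation at each non-positive integer and invoking Laguerre's closure theorem in a form that applies to the genus-$1$ Weierstrass factorization above; everything else is routine monotonicity and the intermediate value theorem.
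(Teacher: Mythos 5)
Your Stage 1 (monotonicity and the intermediate value theorem on each interval $(-k,-(k-1))$) is exactly the paper's argument, using $\psi'(s)=\sum_{n\ge 0}(s+n)^{-2}>0$ together with the $\pm\infty$ limits at the poles. Stage 2, however, takes a genuinely different route. The paper rules out non-real roots directly from the Mittag--Leffler series \eqref{psi}: for $\Im s>0$, each term $-1/(k+s)$ has strictly positive imaginary part, hence $\Im\psi(s)>0$, and symmetrically for $\Im s<0$; this immediately forces all solutions of $\psi(s)=a\in\bbR$ onto the real axis. You instead exhibit $H_a(s)=e^{as}/\gG(s)$ via its genus-one Hadamard product as a member of the Laguerre--P\'olya class, invoke Laguerre's closure-under-differentiation theorem to conclude $H_a'$ has only real zeros, verify $H_a'=(a-\psi)H_a$, and check the zero/pole cancellation $H_a'(-k)=e^{-ak}(-1)^k k!\ne 0$ so that the zero sets match. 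Both arguments are correct. The paper's is shorter and entirely self-contained, needing only the series representation and a one-line imaginary-part estimate. Yours is a clean structural argument that imports classical entire-function machinery; it is perhaps more conceptual, but it trades the elementary calculation for a citation of Laguerre's theorem and some bookkeeping at the non-positive integers. One small gap in completeness relative to the lemma statement: you do not address the numerical values $s_1\doteq-0.567$, $s_2\doteq-1.628$, but the paper itself just attributes these to \texttt{Maple}, so this is not a substantive omission.
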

We let $s_i$ have this meaning throughout the paper.
See \cite[\S5.4(iii)]{NIST} for $s_i(a)$ in the case $a=0$.

\begin{proof}
Recall that $\psi(s)$ is a meromorphic function of $s$, with poles at
$0,-1,-2,\dots$. For any other complex $s$ we have the standard formulas
\cite[5.7.6 and 5.15.1]{NIST}
\begin{align}\label{psi}
  \psi(s)&=-\gamma+\sumko\Bigpar{\frac{1}{k+1}-\frac{1}{k+s}}
,\\\label{psi'}
\psi'(s)&=\sumko\frac{1}{(k+s)^2}
.\end{align}

If $\Im s>0$, then $\Im(1/(k+s))<0$ for all $k$ and thus \eqref{psi}
  implies
$\Im\psi(s)>0$.
Similarly, if $\Im s<0$, then $\Im\psi(s)<0$.
Consequently, all roots of $\psi(t)=a\in\bbR$ are real.

For real $s$, \eqref{psi'} shows that $\psi'(s)>0$. 
We can write $\bbR\setminus\set{\text{the poles}}=
\bigcup_{i=0}^\infty I_i$ with 
$I_0:=(0,\infty)$ and $I_i:=(-i,-(i-1))$ for $i\ge1$;
it then follows that $\psi(s)$ is strictly increasing in each interval
$I_i$.
Moreover, by \eqref{psi} (or general principles),
at the poles we have the limits $\psi(-i-0)=+\infty$ and
$\psi(-i+0)=-\infty$ ($i\ge0$),
and furthermore (by \eqref{5.11.2})
$\psi(s)\to\infty$ as $s\upto+\infty$, so $\psi$ maps each interval $I_i$ to $(-\infty,\infty)$.
Consequently, $\psi(s)=a$
has exactly one root $s_i(a)$ in each $I_i$.
(See also the graph of $\psi(s)$ in \cite[Figure 5.3.3]{NIST}.)
In the special case $a=\psi(1)$, obviously the positive root is
$s_0=1$. 

The numerical values are obtained by \texttt{Maple}.
\end{proof}

\subsection{Mellin transforms}
If $f$ is a function defined on $\bbR_+:=\qooo$, then 
its \emph{Mellin transform}
is defined by
\begin{align}\label{mellin}
  \M f(s):=\intoo f(x) x^{s-1}\dd x
\end{align}
for all complex $s$ such that the integral converges absolutely.
It is well known  that the domain of all such $s$ is a strip 
$\cD=\set{s:\Re s\in J_f}$ 
for some interval $J_f\subseteq\bbR$ (possibly empty or degenerate),
and that $\M f(s)$ is analytic in the interior $\cDo$ of $\cD$
(provided $\cDo$ is non-empty, i.e.,  $J_f$ is neither empty nor degenerate).

In analogy to \eqref{mellin},
if $\mu$ is a (possibly complex) measure on $\bbR_+$, then 
its \emph{Mellin transform}
is defined by
\begin{align}\label{mellin2}
  \M\mu(s):=\intoo  x^{s-1}\dd\mu(x)
\end{align}
for all complex $s$ such that the integral converges absolutely.
Again, the domain of all such $s$ is a strip 
$\cD=\set{s:\Re s\in J_\mu}$ 
for some interval $J_\mu\subseteq\bbR$ (possibly empty or degenerate),
and  $\M\mu(s)$ is analytic in the interior $\cDo$ of $\cD$.
Note that the Mellin transform \eqref{mellin2} for real $s$ is just the
moments of the measure $\mu$, with a simple shift of the argument.

\begin{remark}
There are in the literature 
also other definitions of the Mellin transform of measures;
see for example the Mellin-Stieltjes transform in \cite[Appendix~D]{DrmotaSzp}.
We use the definition above, which is convenient when we want to identify an
absolutely continuous measure with its density function.
\end{remark}

\section{The central idea}\label{Sbasic}
Recall the limit 
$ \XP_{t,1}:=\lim_\ntoo\frac{\KX^{(n)}_{t,1}}{n} $
from \eqref{d2}.
The central idea in this article is to define an infinite measure $\gU$ on
$\oio$ by 
\begin{align}\label{e7}
  \gU :=\intoo \cL(\XP_{t,1})\dd t.
\end{align}
This means that for any (measurable) function $f\ge0$ on $(0,1)$,
\begin{align}\label{cx}
  \intoi f(x)\dd\gU(x) 
=\intoo \intoi f(x) \dd\cL(\XP_{t,1})(x)\dd t
=\intoo \E [f(\XP_{t,1})]\dd t.
\end{align}
This extends by linearity to every  complex-valued $f$ such that
$\intoo \E[| f(\XP_{t,1})|]\dd t<\infty$.
The identity \eqref{cx} enables us to express several important
expectations in terms of the measure $\gU$: these are listed 
in Proposition \ref{P:1} below.
As well as $\E[D_n]$ and $\E[L_n]$, these involve
the total length $\gL_n$ of $\CTCS(n)$, and
 the ``occupation probability", that is
 \begin{equation}
 \mbox{
 $a(n,i) := $ probability that the harmonic descent chain started at state $n$ is ever in state $i$.
 }
 \label{def:ani}
 \end{equation}
 So $a(n,n) = a(n,1) = 1$.

To illustrate the methodology we derive the first of these identities below
(the others will be proved in Section \ref{sec:other}).

\begin{Proposition}
\label{P:1}
For any $n\ge1$:
\begin{align}\label{cxED}
\E[D_n] 
=\intoi \bigpar{1-(1-x)^{n-1}}\dd \gU(x)
.\end{align}

\begin{align}\label{cy2}
a(n,j)
=h_{j-1} \binom {n-1}{j-1} \intoi x^{j-1}(1-x)^{n-j}\dd\gU(x),
\quad 2\le j\le n
.\end{align}

\begin{align}\label{cy3}
  \E[ L_n] = \sum_{j=2}^na(n,j)
=\intoi\sum_{j=2}^n h_{j-1}\binom {n-1}{j-1} x^{j-1}(1-x)^{n-j}\dd\gU(x)
.\end{align}

\begin{align}\label{qm1}
  \E[\gL_n]& 
= \intoi \frac{1}{x}\Bigpar{1-(1-x)^n-nx(1-x)^{n-1}}\dd\gU(x)
. \end{align}

\end{Proposition}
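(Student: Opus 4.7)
Each of the four identities in the Proposition has the form $(\text{expectation})=\intoi f(x)\dd\gU(x)$ for an explicit function $f$. By the defining equation \eqref{cx} of $\gU$, this reduces to showing $(\text{expectation})=\intoo\E[f(\XP_{t,1})]\dd t$. My plan is the same for all four: (a) write the LHS as a time-integral of a natural observable of the exchangeable process $\Pi(t)$ in $\CTCS(n)$; (b) use the paintbox construction (\refT{T:paintbox}) to evaluate the expectation of that observable at time $t$, using the key consequence $|\Pi(t)_1\cap[n]|\eqd1+\mathrm{Bin}(n-1,\XP_{t,1})$; (c) if the result takes the form $\sum_\ell\E[g(\XP_{t,\ell})]$, convert it to $\E[g(\XP_{t,1})/\XP_{t,1}]$ via the size-biasing identity \eqref{pa1}.

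\textbf{Proof sketches for each identity.} For \eqref{cxED}: by exchangeability I take the random leaf to be leaf $1$, so $D_n=\intoo\indic{|\Pi(t)_1\cap[n]|\ge2}\dd t$; the paintbox gives integrand $\E[1-(1-\XP_{t,1})^{n-1}]$. For \eqref{cy2}: strict descent of the HD chain, together with mean holding time $1/h_{j-1}$ at state $j$, gives $a(n,j)=h_{j-1}\intoo\Pr\bigpar{|\Pi(t)_1\cap[n]|=j}\dd t$; substituting $\Pr\bigpar{|\Pi(t)_1\cap[n]|=j}=\binom{n-1}{j-1}\E[\XP_{t,1}^{j-1}(1-\XP_{t,1})^{n-j}]$ finishes it. For \eqref{cy3}: strict descent gives $L_n=-1+\#\{j:\text{chain visits }j\}$, and the identities $a(n,1)=a(n,n)=1$ reduce $\E[L_n]$ to $\sum_{j=2}^n a(n,j)$; then substitute \eqref{cy2}. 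For \eqref{qm1}: write $\gL_n=\intoo K(t)\dd t$ where $K(t)$ counts clades of size $\ge2$ alive in $\CTCS(n)$ at time $t$; the paintbox yields $\E[K(t)]=\sum_\ell\E[g(\XP_{t,\ell})]$ for $g(x)=\Pr(\mathrm{Bin}(n,x)\ge2)=1-(1-x)^n-nx(1-x)^{n-1}$; finally \eqref{pa1} with $f(x)=g(x)/x$ turns this into $\E[g(\XP_{t,1})/\XP_{t,1}]$.

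\textbf{Main obstacle.} Because the paintbox construction does most of the combinatorial work, no individual step is hard in isolation, and the paper explicitly flags \eqref{cxED} as the illustrative case. The points requiring genuine care are (i) in \eqref{cy2}, justifying $\E[\text{time in state }j]=a(n,j)/h_{j-1}$, which combines the strong Markov property with the strictly-descending nature of the HD chain; and (ii) in \eqref{qm1}, both selecting the correct observable $K(t)$ (because size-$\ge2$ clades are exactly the edges with positive lifetime) and recognizing the $1/x$ factor in the integrand as the size-biasing ``Jacobian'' produced by \eqref{pa1} when passing from the sum over all clades back to the distinguished clade of leaf~$1$.
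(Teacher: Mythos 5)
Your proposal is correct. For the first three identities it follows essentially the paper's route: \eqref{cxED} is the paper's explicitly worked illustrative case; for \eqref{cy2}, where the paper simply cites \cite{beta3-arxiv}, you reconstruct the proof via the strict-descent/expected-time-in-state argument, which is the same mechanism the paper implicitly invokes when passing from \eqref{cy22} to \eqref{cy2.5}; and for \eqref{cy3} your decomposition coincides with the paper's (number of size-$\ge2$ clades that leaf $1$ ever belongs to). For \eqref{qm1} you take a genuinely different route. The paper writes $\E[\gL_n]=\sum_{j\ge 2}\E[N_n(j)]/h_{j-1}$, reduces $\E[N_n(j)]$ to $a(n,j)$ via \eqref{jun12}, and then substitutes \eqref{cy22} and sums the binomial series. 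You instead apply the paintbox directly at each time $t$: you write $\gL_n=\intoo K(t)\dd t$ with $K(t)$ the number of size-$\ge 2$ blocks of $\Pi(t)\cap[n]$, compute $\E[K(t)]=\sum_\ell\E[g(\XP_{t,\ell})]$ with $g(x)=\Pr(\mathrm{Bin}(n,x)\ge2)=1-(1-x)^n-nx(1-x)^{n-1}$, and then invoke the size-biasing identity \eqref{pa1} with $f=g/x$ to convert the sum over blocks into an expectation over the tagged block frequency $\XP_{t,1}$ --- which is exactly where the $1/x$ in \eqref{qm1} originates. Both routes work; yours is tidier in that it avoids the detour through $N_n(j)$ and $a(n,j)$ and makes the $1/x$ factor transparently the size-biasing Jacobian, whereas the paper's route has the (minor) advantage of re-using the already-established \eqref{cy2}.
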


But what is the measure $\gU$, explicitly?
We first note that \eqref{cx} and \eqref{b5} tell  us the 
Mellin transform of the 
measure $\gU$:  
\begin{align}
\label{m1}
\M\gU(s):=
  \intoi x^{s-1}\dd\gU(x)
=\intoo \Ex \bigsqpar{\XP_{t,1}^{s-1}}\dd t
= \frac{1}{\psi(s)-\psi(1)},
\qquad \Re s>1.
\end{align}
The measure $\gU$ is determined by its  Mellin transform.
We do not know how to invert the transform \eqref{m1} to obtain a useful explicit formula for
$\gU$, but what is relevant to our asymptotics is mainly the behavior of $\gU$
near $0$, which will be given in the ``inversion estimate", \refL{LM}.

\subsection{An illustrative identity}
To prove \eqref{cxED},
consider $\CTCS(n)$ for an integer $n\ge1$, and recall that the clades
at time $t$ are precisely the blocks of the partition $\Pi(t)\cap[n]$.
Fix a non-empty subset  $J\subseteq[n]$ of size $j=|J|\ge1$.
If we use the paintbox construction in Theorem \ref{T:paintbox}
to reconstruct a copy $\Pi'(t)$ of $\Pi(t)$, 
we see that
\begin{quote}
conditioned on $(P_{t,\ell})_{\ell=1}^\infty$,
the probability that $J$ is a block in $\Pi'(t)\cap[n]$
equals $\sum_\ell \XP_{t,\ell}^j(1-\XP_{t,\ell})^{n-j}$. 
\end{quote}
We take the expectation;
$\Pi'(t)$ has the same distribution as $\Pi(t)$, 
so the distinction between them then disappears 
and we obtain that
for any fixed non-empty set 
$J\subseteq[n]$ of size $|J|=j$ and $t>0$,
\begin{align}\label{cx1}
\Pr\bigpar{J \text{ is a clade at time }t}&
=
\Ex \Bigsqpar{\sum_\ell \XP_{t,\ell}^j(1-\XP_{t,\ell})^{n-j}}
.\end{align}
By \eqref{pa1}, we can rewrite \eqref{cx1} as
\begin{align}\label{cx2}
\Pr\bigpar{J \text{ is a clade at time }t}
=
  \Ex \bigsqpar{ \XP_{t,1}^{j-1}(1-\XP_{t,1})^{n-j}}
.\end{align}

In particular, since $D_n\le t$ exactly when $\set1$ is a clade at time $t$,
\eqref{cx2} with $J=\set1$ and thus $j=1$ yields
\begin{align}\label{cx3}
  \Pr(D_n>t) = 1-\Pr(D_n\le t) 
= 1-  \Ex \bigsqpar{(1-\XP_{t,1})^{n-1}}
= \Ex \bigsqpar{1-(1-\XP_{t,1})^{n-1}}
.\end{align}
Consequently,
\begin{align}\label{cx4}
\E[D_n] = \intoo \Pr(D_n>t) \dd t
=\intoo \Ex \bigsqpar{1-(1-\XP_{t,1})^{n-1}}\dd t
,\end{align}
which by \eqref{cx} yields our desired formula \eqref{cxED}.

As with the other identities in Proposition \ref{P:1},
it is then
straightforward (but sometimes intricate) classical analysis to combine \eqref{cxED} with the inversion estimate (\refL{LM}) to obtain
an asymptotic expansion of $\E[D_n]$: see \refSs{SED1},   \ref{SEL1}, and \ref{SgL}.

\begin{Remark} We give in \refS{SED2} a different argument, \emph{not} using
\refL{LM}, where we obtain the same asymptotic expansion 
by directly combining \eqref{cxED} and the Mellin transform
\eqref{m1}, using a version of Parseval's formula.
This method is less intutive, but leads to exact formulas involving
line integrals in the complex plane, which through residue calculus
yield another proof of the asymptotic expansion.
\end{Remark}

\begin{Remark}\label{RED1}
We remark that \eqref{cxED} combined with \eqref{m1} 
also yields an exact formula for $\Ex[D_n]$.
By the binomial theorem we obtain, using \eqref{no4}, 
\begin{align}\label{au5}
  \Ex[D_n]& 
=\intoi \sum_{j=1}^{n-1}(-1)^{j-1}\binom{n-1}{j}x^{j}\dd\gU(x)
\\\notag&
= \sum_{j=1}^{n-1}(-1)^{j-1}\binom{n-1}{j}\frac{1}{\psi(j+1)-\psi(1)}
= \sum_{j=1}^{n-1}(-1)^{j-1}\binom{n-1}{j}\frac{1}{h_j}.
\end{align}
However, since this is an alternating sum, it does not seem easy to derive
asymptotics from it, and we will not use it. 
\end{Remark}

\section{Other identities involving $\gU$}
\label{sec:other}

\subsection{The occupation probability}\label{SSOP}


As noted in Section \ref{sec:Lh}, if we follow the path from the root to leaf 1
(or equivalently to a uniformly random leaf)
in $\CTCS(n)$ or $\DTCS(n)$, then the sequence of clade sizes follows a
Markov chain, which we call the
{\em harmonic descent} (HD) chain, see \cite{beta2-arxiv} and \cite{HDchain}.
In particular, we are interested in the ``occupation probability", that is
 \begin{equation} \label{def:ani2}
 a(n,i) := 
\text{probability that the HD chain started at state $n$ is ever in state $i$}.
 \end{equation}
In other words, $a(n,i)$ is the probability that a given (or random) leaf
in $\CTCS(n)$ or $\DTCS(n)$ belongs to some subtree with exactly $j$ leaves. 
Writing $N_n(j)$ for the number of such subtrees, we clearly have 
 \begin{equation}\label{jun12}
\E[N_n(j)]
= n a(n,j)/j .
 \end{equation}

Similar to the argument above that proved  \eqref{cxED}, one can prove 
\cite[(6.7), (6.6), and (6.10)]{beta3-arxiv}
the identity \eqref{cy2}, which we repeat as
\begin{align}\label{cy22}
a(n,j)
=h_{j-1} \binom {n-1}{j-1} \intoi x^{j-1}(1-x)^{n-j}\dd\gU(x), \ 2 \le j \le n
.\end{align}

The methodology via Lemma \ref{LM} that we use later was already applied in \cite{beta3-arxiv} to establish the asymptotics of $a(n,j)$ stated in Theorem \ref{Ta},
so we do not repeat it here.

\begin{Remark}\label{R:anj}
Similarly to \eqref{au5}, we obtain from \eqref{cy22}, \eqref{m1}, and
\eqref{no4} the exact  formula,
using a binomial expansion and manipulation of binomial coefficients,
\begin{align}\label{cec1}
  a(n,j) 
&
=h_{j-1} \binom {n-1}{j-1} \intoi \sum_{k=0}^{n-j} 
  \binom{n-j}{k}x^{j-1}(-x)^{k}\dd\gU(x)
\\\notag&
=h_{j-1}\sum_{k=0}^{n-j}\binom {n-1}{j-1}\binom{n-j}{k} 
  \frac{(-1)^k}{\psi(j+k)-\psi(1)}
\\\notag&
=\sum_{k=0}^{n-j}(-1)^k\binom {n-1}{j-1,k,n-j-k}\frac{h_{j-1}}{h_{j+k-1}}
.\end{align}
\end{Remark}

\subsection{The expected hop-height $\E[L_n]$}\label{SSLn}

 In $\CTCS(n)$, the expected lifetime of a clade of size
$j$ is $1/h_{j-1}$, and hence \eqref{cy22} implies
\begin{align}\label{cy2.5}
  \E [D_n] = \sum_{j=2}^n \frac{1}{h_{j-1}}a(n,j)
=\intoi\sum_{j=2}^n \binom {n-1}{j-1} x^{j-1}(1-x)^{n-j}\dd\gU(x),
\end{align}
which by summing the binomial series yields another proof of \eqref{cxED}.
Similarly, for $\DTCS(n)$, the height $L_n$ equals the number of clades of
sizes $\ge2$ that leaf 1 ever belongs to, and thus we have
\begin{align}\label{cy32}
  \E [L_n] = \sum_{j=2}^na(n,j)
=\intoi\sum_{j=2}^n h_{j-1}\binom {n-1}{j-1} x^{j-1}(1-x)^{n-j}\dd\gU(x),
\end{align}
We will proceed to the asymptotic expansion of $  \E [L_n]$  in \refS{SEL1}.

\subsection{The expected length $\E[\gL_n]$}\label{SSgLn}

The number of edges of $\CTCS(n)$ equals $n-1$.
Identifying {\em length} of an edge with {\em duration of time}, one can consider the length $\Lambda_n$ of 
$\CTCS(n)$, that is the sum of all edge-lengths.
In other words, $\gL_n$ is the sum of the lifetimes of all internal nodes. 
Since
there are $N_n(j)$ nodes with $j$ descendants and each of them has an
expected lifetime of $1/h_{j-1}$, with
the lifetimes  independent of $N_n(j)$, 
we obtain,
using \eqref{jun12} and \eqref{cy22}, 
\begin{align}\label{qm11}
  \E[\gL_n]& 
= \sum_{j=2}^n \frac{1}{h_{j-1}}\E [N_n(j)] 
= \sum_{j=2}^n\binom nj \intoi x^{j-1}(1-x)^{n-j}\dd\gU(x)
\\\notag&
= \intoi \frac{1}{x}\Bigpar{1-(1-x)^n-nx(1-x)^{n-1}}\dd\gU(x).
\end{align}

We will proceed to the asymptotic expansion of $  \E [\gL_n]$  in \refS{SgL}.

\section{The measure $\gU$}\label{SgU}


The following {\em inversion estimate} will allow us to pass from the identities in Proposition \ref{P:1} to sharp $n \to \infty$ asymptotics for expectations.
Part of this lemma is also given in \cite{beta3-arxiv};
for completeness, we give the entire proof.

\begin{Lemma}\label{LM}
  Let $\gU$ be the infinite
measure on $\oio$ having the Mellin transform
\eqref{m1}.
Then 
$\gU$ is absolutely continuous, with a continuous density $\gu(x)$ on
$(0,1)$.
Furthermore:
\begin{romenumerate}
  
\item 
The density $\gu(x)$ satisfies
\begin{align}\label{lmb}
\gu(x)
= \frac{6}{\pi^2x}+O\bigpar{x^{-s_1}+x^{-s_1}|\log x|\qw},
\end{align}
uniformly for $x\in(0,1)$, 
where $s_1\doteq -0.567$ is the largest negative root of $\psi(s)=\psi(1)$.
In other words,
for $x\in(0,1)$, 
\begin{align}
  \label{lmr}
\gu(x)=\frac{6}{\pi^2}x\qw + r(x)
\end{align}
where
\begin{align}\label{aur}
r(x)= O\bigpar{x^{-s_1}+x^{-s_1}|\log x|\qw}  
\end{align}
and thus, in particular,
for $x\in(0,\frac12)$ say,
\begin{align}\label{aur1}
r(x)=O\bigpar{x^{|s_1|}}
.\end{align}
Furthermore, 
$\gU(\gd,1)<\infty$ 
for every $\gd>0$
and
\begin{align}
  \label{aur2}
\intoi |r(x)|\dd x<\infty. 
\end{align}

\item 
More generally, 
with $0>s_1>s_2>\dots$ denoting the negative roots of $\psi(s)=\psi(1)$,
for any $N\ge0$, 
\begin{align}\label{lmrN}
\gu(x)
= \frac{6}{\pi^2}x\qw+
\sum_{i=1}^N \frac{1}{\psi'(s_i)} x^{|s_i|}
+r_N(x),
\qquad 0<x<1,
\end{align}
where
\begin{align}\label{aurN}
r_N(x)=O\bigpar{x^{|s_{N+1}|}(1+|\log x|^{-1})}
\end{align}
and 
\begin{align}
  \label{aur2N}
\intoi |r_N(x)|\dd x<\infty. 
\end{align}
\end{romenumerate}
\end{Lemma}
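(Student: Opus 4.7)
The plan is to prove \refL{LM} by Mellin inversion of the identity $\M\gU(s) = 1/(\psi(s)-\psi(1))$, shifting the vertical contour leftward past the simple poles at $s_0=1, s_1, s_2,\ldots$ and collecting residues. For $\sigma>1$ the inversion formula gives
\begin{align*}
\gu(x) = \frac{1}{2\pi\ii}\int_{\sigma-\ii\infty}^{\sigma+\ii\infty} \frac{x^{-s}}{\psi(s)-\psi(1)}\,\ddx s .
\end{align*}
The immediate obstacle is that by \eqref{5.11.2}, $\psi(\sigma+\ii t)-\psi(1)\sim\log|t|$, so the integrand decays only like $1/\log|t|$ and is \emph{not} absolutely integrable on the line. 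I would cure this by one integration by parts in $s$ using $x^{-s}\,\ddx s = -\ddx(x^{-s})/\log x$, obtaining
\begin{align*}
\gu(x) = \frac{-1}{2\pi\ii\,\log x}\int_{\sigma-\ii\infty}^{\sigma+\ii\infty} x^{-s}\,\frac{\psi'(s)}{(\psi(s)-\psi(1))^2}\,\ddx s ;
\end{align*}
termwise differentiation of \eqref{5.11.2} gives $\psi'(\sigma+\ii t)=O(1/|t|)$, so the new integrand is $O(|t|^{-1}\log^{-2}|t|)$, which is absolutely integrable. This representation already yields absolute continuity of $\gU$ and continuity of $\gu$, and is the source of the $|\log x|^{-1}$ factor in the error.

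Next I would shift the contour leftward from $\Re s=\sigma$ to a vertical line $\Re s=c_N\in(s_{N+1},s_N)$, chosen to avoid the integer poles $-k$ of $\psi$ (at which our integrand has removable zeros, not poles). By \refL{Lpsi}, the only poles of $1/(\psi(s)-\psi(1))$ in this strip are the simple poles at $s_0,s_1,\ldots,s_N$, with residues $1/\psi'(s_i)$. Hence the integrand picks up residues $x^{-s_i}/\psi'(s_i)$: at $s_0=1$ this produces $(6/\pi^2)x^{-1}$ (using $\psi'(1)=\pi^2/6$), and at $s_i$ for $i\ge 1$ it produces $x^{|s_i|}/\psi'(s_i)$. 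The horizontal cross-segments $\Im s=\pm T$ vanish as $T\to\infty$ because the integrand decays at least like $1/\log T$ on them, and the imaginary-part argument in the proof of \refL{Lpsi} shows $\Im\psi(s)$ has the same sign as $\Im s$, so no non-real zeros of $\psi(s)-\psi(1)$ are encountered along the way.

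The remainder $r_N(x)$ equals the shifted integral at $\Re s=c_N$. The integration-by-parts form gives $|r_N(x)|\le C_N\,x^{-c_N}/|\log x|$; letting $c_N\downto s_{N+1}$ (with harmless loss in the constant) yields $r_N(x)=O(x^{|s_{N+1}|}/|\log x|)$ for small $x$. For $x$ bounded away from $0$ a direct estimate from the un-integrated form gives $r_N(x)=O(x^{|s_{N+1}|})$, and combining the two yields \eqref{aurN}. For \eqref{aur2N}, integrability near $0$ follows from $|s_{N+1}|>0$, and near $1$ one argues that $\gU((\gd,1))\le \gd^{1-s}\M\gU(s)<\infty$ for any real $s>1$, so $\gu$ and hence $r_N$ is locally integrable away from $0$.

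The main obstacle is the slow, merely logarithmic, decay of $1/(\psi(s)-\psi(1))$ on vertical lines: standard Mellin--Barnes inversion requires polynomial decay for absolute convergence, and the integration-by-parts trick is what simultaneously secures that convergence and produces the unusual $|\log x|^{-1}$ improvement in the error. A subsidiary technical point is verifying absence of complex zeros of $\psi(s)-\psi(1)$ on the contour and uniform control of the horizontal cross-segments as $T\to\infty$, both of which reduce to the asymptotic expansion \eqref{5.11.2} and the imaginary-part argument from \refL{Lpsi}.
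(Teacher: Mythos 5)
Your proposal is close to the paper's in its central idea: Mellin inversion of $1/(\psi(s)-\psi(1))$, handling the merely logarithmic decay on vertical lines by introducing a $1/\log x$ factor, then shifting the contour leftward and collecting residues. Your integration by parts in $s$ is essentially what the paper does in disguise: it first reduces $\gU$ to a finite measure $\nu$ via $\dd\nu=x\dd\gU$, subtracts the residue contribution at $s_0=1$ to get $\nux$, and then considers $\tnux'(s)$, which is the Mellin transform of $\log(x)\dd\nux(x)$ — the same multiplication by $\log x$ that your integration by parts produces. One organizational difference: the paper establishes absolute continuity of $\gU$ \emph{first}, from the Fourier transform of the finite signed measure $\log(x)\dd\nux(x)$ being integrable, and only then writes the inversion integral. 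You start from the inversion formula for $\gu$, which presupposes the density exists; to close that gap you should define a candidate density by your (absolutely convergent) integrated-by-parts integral and then verify it is in fact the density of $\gU$ (e.g.\ by matching Mellin transforms), rather than asserting that "this representation already yields absolute continuity."

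Two concrete problems in your error analysis need fixing.
\textbf{(i)} ``Letting $c_N\downto s_{N+1}$ with harmless loss in the constant'' fails: the constant $C_N$ in your bound $|r_N(x)|\le C_N\,x^{-c_N}/|\log x|$ necessarily blows up as $c_N\downto s_{N+1}$, since $s_{N+1}$ is a pole of the integrand on the shifted contour. The paper's remedy is to place the line at a \emph{fixed} $c\in(s_{N+2},s_{N+1})$, obtaining a remainder of order $x^{-c}$ (which is strictly smaller than $x^{|s_{N+1}|}$ near $0$), and then adding back the explicit residue term at $s_{N+1}$, which is $O(x^{|s_{N+1}|})$; the sum gives \eqref{aurN}.
\textbf{(ii)} Your claim that ``for $x$ bounded away from $0$ a direct estimate gives $r_N(x)=O(x^{|s_{N+1}|})$'' is false near $x=1$: by Remark~\ref{RLM1-}, $\gu(x)\to\infty$ as $x\upto1$, and hence so does $r_N(x)$. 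The $|\log x|^{-1}$ factor in \eqref{aurN} is what accommodates this blow-up; it is not merely a small-$x$ refinement but is essential near $1$. Your estimate near $1$ therefore needs to be carried through the integrated-by-parts form (retaining the $|\log x|^{-1}$) rather than the un-integrated one.
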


By \eqref{aur1}--\eqref{aur2}, the Mellin transform $\Mr(s)$ exists for 
$\Re s>s_1$, and we obtain by \eqref{lmr} and \eqref{m1}
\begin{align}
  \label{mr}
\Mr(s) = \MgU(s)-\frac{6}{\pi^2}\intoi x^{s-2}\dd x
= \frac{1}{\psi(s)-\psi(1)}-\frac{6}{\pi^2}\cdot\frac{1}{s-1},
\end{align}
first for $\Re s>1$ and then by analytic continuation for $\Re s>s_1$;
note that the \rhs{} of \eqref{mr} has a removable singularity at $s=1$
since the residues of the two terms cancel.

\begin{proof}[Proof of \refL{LM}]
We begin by noting, 
as  in the proof of Lemma \ref{Lpsi}, that
the Mellin transform $1/\bigpar{\psi(s)-\psi(1)}$
in \eqref{m1}  extends to 
a meromorphic function in the entire
complex plane, whose poles are the roots of
$  \psi(s)=\psi(1)$.
As shown in Lemma \ref{Lpsi}, besides the obvious pole $s_0=1$,
the other poles are real and negative, and thus can be ordered
$0>s_1>s_2>\dots$. In particular,
there are no other poles than 1 in the half-plane
$\Re s>s_1$, with $s_1\doteq -0.567$.
The residue at the pole $s_0=1$ is,
using \eqref{no3},
\begin{align}\label{m3}
  \Res_{s=1}\frac{1}{\psi(s)-\psi(1)}
=\frac{1}{\psi'(1)}=\frac{6}{\pi^2}
.\end{align}

We cannot immediately use standard results on Mellin inversion
(as  in \cite[Theorem 2(i)]{FGD})
because the Mellin transform in \eqref{m1} decreases too slowly 
as $\Im s\to\pm\infty$ to be integrable on a vertical line $\Re s=c$.%
\footnote{And we cannot use
\cite[Theorem 2(ii)]{FGD} since we do not know that $\gU$ has a density
that is locally of bounded variation.}
In fact, \eqref{5.11.2} implies that
\begin{align}\label{m4}
  \psi(s) = \log s + o(1) = \log|s| + O(1) = \log|\Im s|+O(1)
\end{align}
as $\Im s\to\infty$ with $s$ in, for example, any half-plane $\Re s\ge c$.

We overcome this problem by differentiating the Mellin transform, but we
first subtract the leading term corresponding to the pole at 1.
Since $\gU$ is an infinite measure, we first replace it by 
$\nu$ defined by 
$\dd\nu(x)=x\dd\gU(x)$; note that $\nu$ is also a measure on $\oio$, and 
taking $s=2$ in \eqref{m1} shows that $\nu$ is a finite measure.

Next, define $\nu_0$ as the measure $(6/\pi^2)\dd x$ on $\oio$, and let $\nux$
be the (finite) signed measure $\nu-\nu_0$. 
Then $\nux$ has the Mellin transform, by \eqref{m1},
\begin{align}\label{m5}
\tnux(s)
&:=
  \intoi x^{s-1}\dd\nux(x)
=  \intoi x^{s}\dd\gU(x)
- \frac{6}{\pi^2} \intoi x^{s-1}\dd x
\\\notag&\phantom:
=\frac{1}{\psi(s+1)-\psi(1)}-\frac{6}{\pi^2s},
\qquad \Re s>0.
\end{align}
We may here differentiate under the integral sign, which gives
\begin{align}\label{m6a}
\tnux'(s)
&:=
  \intoi(\log x) x^{s-1}\dd\nux(x)
\\ \label{m6b}&\phantom:
=
-\frac{\psi'(s+1)}{(\psi(s+1)-\psi(1))^2}
+\frac{6}{\pi^2s^2},
\qquad \Re s>0.
\end{align}
The Mellin transform $\tnux(s)$ in \eqref{m5} extends to a meromorphic
function in $\bbC$ with
(simple) poles $(s_i-1)_1^\infty$; note that there is no pole at $s_0-1=0$,
since the residues there of the two terms in \eqref{m5} cancel by \eqref{m3}.
Furthermore, the formula \eqref{m6b} for $\tnux'(s)$ then holds
for all $s$ (although the integral in \eqref{m6a} diverges unless $\Re s>0$).

For any real $c$ we have, on the vertical line $\Re s=c$, as $\Im s\to\pm\infty$,
that $\psi(s)\sim \log|s|$ by \eqref{m4}, and also, by differentiation of
\eqref{m4} (see \cite[5.15.8]{NIST}) 
that $\psi'(s)\sim s\qw$.
It follows from \eqref{m6b} that 
\begin{align}\label{m6c}
\tnux'(s)=O(|s|\qw\log\qww|s|)  
\end{align}
on the
line $\Re s=c$,  
for $|\Im s|\ge2$ say,
and thus
$\tnux'$ is integrable on this line
unless $c$ is one of the poles $s_i-1$.
In particular, taking $c=1$ and thus $s=1+u\ii$ ($u\in\bbR$),
we see that the function 
\begin{align}\label{mma}
\tnux'(1+\ii u)=\intoi x^{\ii u}\log (x) \dd\nux(x)  
\end{align}
is integrable. 
The change of variables $x=e^{-y}$ shows that the function
\eqref{mma} 
is the Fourier transform of the signed measure 
on $\bbR_+$ that corresponds to $\log(x)\dd\nux(x)$.
This measure 
on $\bbR_+$
is thus a finite signed measure with integrable Fourier transform,
which implies that it is absolutely continuous with a continuous density.
Reversing the change of variables, we thus see that the signed measure
$\log(x)\dd\nux(x)$ is absolutely continuous with a continuous density on
$(0,1)$. Moreover, denoting this density by $h(x)$, we obtain the standard
inversion formula for the Mellin transform
\cite[Theorem 2(i)]{FGD}, \cite[1.14.35]{NIST}:
\begin{align}\label{m8}
 h(x)=\frac{1}{2\pi\ii}\int_{c-\infty\ii}^{c+\infty\ii} x^{-s}\tnux'(s)\dd s,
\qquad x>0,
\end{align}
with $c=1$.
Furthermore, the integrand in \eqref{m8} is analytic in the half-plane $\Re
s>s_1-1$, and the estimate \eqref{m6c} above is
uniform for $\Re s$ in any compact interval and $|\Im s|\ge2$. 
Consequently, we may shift the line of integration in \eqref{m8} to any
$c>s_1-1$. Taking absolute values in \eqref{m8}, and recalling that
$\tnux'(s)$ is integrable on the line, then yields
\begin{align}\label{m9}
  h(x) = O\bigpar{x^{-c}}
\end{align}
for any $c>s_1-1$.

Reversing the transformations above, we see that $\nux$ has the density
$(\log x)\qw h(x)$, and thus $\nu$ has the density
$(\log x)\qw h(x)+ 6/\pi^2$, and, finally, that $\gU$ has the density
\begin{align}\label{m10}
\gu(x):=\frac{\ddx\gU}{\ddx x}
=\frac{1}{x}\frac{\ddx\nu}{\ddx x}
= \frac{6}{\pi^2x}+\frac{1}{x\log x} h(x),
\qquad 0<x<1.
\end{align}
Furthermore, \eqref{m10} and \eqref{m9} have the form of the claimed
estimate \eqref{lmb},
although with the weaker error term $O(x^{-s_1-\eps}|\log x|\qw)$ 
for any $\eps>0$.

To obtain the claimed error term, we note that the residue of
$\tnux(s)$
at $s_1-1$ is $a_1:=1/\psi'(s_1)$. Let $\nu_1$ be the measure
$a_1x^{1-s_1}\dd x$ on $\oio$; then $\nu_1$ has Mellin transform
\begin{align}\label{m11}
  \widetilde{\nu_1}(s)=a_1\intoi x^{s-1}x^{1-s_1}\dd x
=\frac{a_1}{s+1-s_1},
\qquad \Re s> s_1-1.
\end{align}
It follows from \eqref{m5} and \eqref{m11}
that the signed measure $\nu-\nu_0-\nu_1=\nux-\nu_1$ has the Mellin
transform 
\begin{align}\label{m12}
\frac{1}{\psi(s+1)-\psi(1)}-\frac{6}{\pi^2s}-\frac{a_1}{s+1-s_1},
\end{align}
which is an analytic function in the half plane $\Re s > s_2-1$.
Hence, the same argument as above yields the estimate
\begin{align}\label{lm1}
\gu(x)
= \frac{6}{\pi^2x}+
\frac{1}{\psi'(s_1)} x^{-s_1}
+O\bigpar{x^{-s_2-\eps}|\log x|\qw},
\qquad x\downarrow 0,
\end{align}
for any $\eps>0$, which in particular yields \eqref{lmb},
and thus \eqref{lmr}--\eqref{aur}.

We may continue the argument above further and subtract similar terms for
any number of poles; this leads to the estimate, for any $N\ge1$,
\begin{align}\label{lm++}
\gu(x)
= \frac{6}{\pi^2x}+
\sum_{i=1}^N \frac{1}{\psi'(s_i)} x^{-s_i}
+O\bigpar{x^{-s_{N+1}}(1+|\log x|^{-1})},
\qquad 0<x<1,
\end{align}
and thus \eqref{lmrN}--\eqref{aurN}.

Finally, $\gU(\gd,1)<\infty$ for $\gd>0$ follows directly from \eqref{m1}
with $s=2$, say. Hence, \eqref{lmr} implies 
$\int_{\xfrac12}^1|r(x)|\dd x<\infty$, while 
$\int_0^{\xfrac12}|r(x)|\dd x<\infty$ follows from \eqref{aur1};
thus \eqref{aur2} holds. The same argument yields \eqref{aur2N}.
\end{proof}

\begin{Remark}\label{RLM1-}
  Although the function $h(x)$ is continuous also at $x=1$ (and thus  $h(1)=0$),
the density $\gu(x)$ diverges as $x\upto1$ because of
the factor $\log x$ in the denominator in \eqref{m10};
in fact, it can be shown by similar arguments that
\begin{align}\label{lm1a}
  \gu(1-y) \sim \frac{1}{y\,|\log y|^2},
\qquad y\downto0.
\end{align}
Hence, 
$r(x)$ and $r_N(x)$ are unbounded on $(0,1)$ and
the error terms in \eqref{lmb}, \eqref{aur},  and \eqref{aurN}
cannot be simplified as in \eqref{aur1}
on the entire interval $(0,1)$.
\end{Remark}

\section{The expected leaf height $\E[D_n]$}\label{SED1}

Armed with \refL{LM}, we may now, as said in \refS{Sbasic},
obtain an asymptotic expansion of $\E[D_n]$ from \eqref{cxED},
which extends the first terms given
in \cite[Theorem 1.1]{beta1}.
We begin by finding the leading terms, using \eqref{lmb}--\eqref{aur} only: that gives Proposition \ref{P:2}.
We then extend this to a full asymptotic expansion.

\subsection{Leading terms}
We denote the integrand in \eqref{cxED} by
\begin{align}\label{fn}
  f_n(x):=
  \begin{cases}
{1-(1-x)^{n-1}}, & 0<x<1,
\\
0, & x\ge1.
  \end{cases}
\end{align}
In \eqref{cxED}, we also recall that $\dd\gU(x)=\gu(x)\dd x$ 
and substitute $\gu(x)$ 
using \eqref{lmr};
this yields two terms:
\begin{align}
  \label{hw11}
\E[D_n]= \intoi f_n(x) \frac{6}{\pi^2x}\dd x + \intoi f_n(x) r(x)\dd x
.\end{align}
For the first (main) term we 
use the following lemma (stated in a  general form for later use),
which gives the Mellin transform of  $f_n$.

\begin{Lemma}\label{L99}
Fix $n\ge1$.  The Mellin transform
  \begin{align}\label{su1}
    \Mfn(s):=\intoi x^{s-1}\bigpar{1-(1-x)^{n-1}}\dd x,
\qquad \Re s>-1,
  \end{align}
is analytic in the half-plane $\Re s>-1$ and is given explicitly by
  \begin{align}\label{su2}
    \Mfn(s)&=
\frac{1}{s}-\frac{\gG(s)\gG(n)}{\gG(n+s)}
= \frac{1}{s}\Bigpar{1-\frac{\gG(s+1)\gG(n)}{\gG(n+s)}},
\qquad s\neq0,
\\\label{su3}
\Mfn(0)&=h_{n-1}.
  \end{align}
\end{Lemma}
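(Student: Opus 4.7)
The plan is to verify the formula first on the half-plane $\Re s>0$, where both terms in the splitting converge separately, and then extend to $\Re s>-1$ by analytic continuation; the value at $s=0$ is then read off as a limit.

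First I would establish convergence and analyticity of $\Mfn(s)$ on the half-plane $\Re s>-1$. Near $x=0$, the Taylor expansion gives $1-(1-x)^{n-1}=(n-1)x+O(x^2)$, so the integrand $x^{s-1}\bigpar{1-(1-x)^{n-1}}$ is dominated by $C|x|^{\Re s}$, which is integrable near $0$ precisely when $\Re s>-1$. On $[1/2,1]$ the integrand is bounded. Standard Morera/Fubini arguments then show that $\Mfn$ is analytic on $\set{\Re s>-1}$.

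Next, for $\Re s>0$, both $\intoi x^{s-1}\dd x$ and $\intoi x^{s-1}(1-x)^{n-1}\dd x$ converge separately; the first equals $1/s$ and the second is the Beta integral $B(s,n)=\gG(s)\gG(n)/\gG(n+s)$. Subtracting yields
\begin{align*}
\Mfn(s)=\frac{1}{s}-\frac{\gG(s)\gG(n)}{\gG(n+s)}
=\frac{1}{s}\Bigpar{1-\frac{\gG(s+1)\gG(n)}{\gG(n+s)}},
\qquad \Re s>0,\ s\neq0,
\end{align*}
using $\gG(s+1)=s\gG(s)$ for the second form. The \rhs{} of \eqref{su2} is meromorphic on $\bbC$ with the only potential pole in $\Re s>-1$ being at $s=0$; however, both $1/s$ and $\gG(s)\gG(n)/\gG(n+s)$ have residue $1$ at $s=0$ (since $\gG(n)/\gG(n+s)\to1$), so the singularity is removable. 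The second form makes this manifest: $\gG(s+1)\gG(n)/\gG(n+s)$ is analytic at $s=0$. Hence the \rhs{} is analytic on $\set{\Re s>-1}$, agrees with $\Mfn$ on $\set{\Re s>0}$, and so by analytic continuation the identity \eqref{su2} holds throughout $\Re s>-1$, $s\neq0$.

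Finally, to evaluate $\Mfn(0)$ I would take the limit as $s\to 0$ in the second form, using the expansions $\gG(s+1)=1-\gamma s+O(s^2)$ and $\gG(n+s)=\gG(n)\bigpar{1+\psi(n)s+O(s^2)}$ together with $\psi(n)=h_{n-1}-\gamma$ from \eqref{no4}; this gives $\gG(s+1)\gG(n)/\gG(n+s)=1-h_{n-1}s+O(s^2)$, and hence $\Mfn(0)=h_{n-1}$. Alternatively, and perhaps cleaner, one can compute $\Mfn(0)$ directly by expanding $1-(1-x)^{n-1}=\sum_{k=1}^{n-1}\binom{n-1}{k}(-1)^{k-1}x^k$, integrating termwise, and applying the classical identity $\sum_{k=1}^{n-1}\binom{n-1}{k}(-1)^{k-1}/k=h_{n-1}$. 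There is no real obstacle here; the only subtle point is verifying that the apparent pole of each piece at $s=0$ cancels, which the second form of \eqref{su2} makes transparent.
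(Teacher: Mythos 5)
Your proof is correct and follows essentially the same route as the paper: split the integral for $\Re s>0$ into $1/s$ minus a beta integral, extend by analytic continuation, and evaluate the removable singularity at $s=0$. The only cosmetic difference is that you compute $\Mfn(0)$ via the Taylor expansion of $\gG(s+1)\gG(n)/\gG(n+s)$ (or the binomial alternative), whereas the paper evaluates the derivative at $s=0$ directly; these are the same calculation.
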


\begin{proof}
  It is elementary that 
the integral in \eqref{su1} converges and defines an analytic (in fact,
rational) function for $\Re s>-1$.
For $\Re s>0$, we may write 
$\Mfn(s)=\intoi x^{s-1} \dd x - \intoi x^{s-1}(1-x)^{n-1}\dd x$,
and \eqref{su2} follows by evaluating the beta integral.
Hence, \eqref{su2} follows by analytic continuation.

For $s=0$, the right-hand side of \eqref{su2} has a removable singularity,
with the value, by the definition of derivative (or L'H{\^o}pital's rule),
$\gG'(s)=\psi(s)\gG(s)$, and \eqref{no4},
\begin{align}\label{su4}
  \Mfn(0)
= -\frac{\ddx}{\dd s}\frac{\gG(s+1)\gG(n)}{\gG(n+s)}\Bigr|_{s=0}
=-\psi(1)+\psi(n)
=h_{n-1},
\end{align}
which shows \eqref{su3}.
\end{proof}

By \eqref{cxED}, \eqref{lmr}, and \refL{L99},
the main term of $\E[D_n]$ in \eqref{hw11} is
\begin{align}  \label{au6}
\intoi&\bigsqpar{1-(1-x)^{n-1}}\frac{6}{\pi^2x}\dd x
= \frac{6}{\pi^2}\Mfn(0)=\frac{6}{\pi^2}h_{n-1}.
\end{align}

For the remainder term in \eqref{hw11}, we use \eqref{aur1} and \eqref{aur2}.
Consequently, 
\begin{align}\label{au7}
\intoi\bigsqpar{1&-(1-x)^{n-1}}r(x)\dd x
\\\notag&
=\intoi r(x)\dd x
-\int_0^{1/2}(1-x)^{n-1}r(x)\dd x 
-\int_{1/2}^1(1-x)^{n-1}r(x)\dd x 
\\\notag&
=\Mr(1)
+\int_0^{1/2}(1-x)^{n-1}O(x^{|s_1|})\dd x 
+\int_{1/2}^1O\bigpar{2^{-n}|r(x)|}\dd x 
\\\notag&
=\Mr(1)+O(n^{-1-|s_1|})+O(2^{-n})
=\Mr(1)+O(n^{-1-|s_1|}),
\end{align}
where we estimate the penultimate integral using $(1-x)^{n-1}\le e^{-(n-1)x}$
(or by another beta integral).
Combining \eqref{cxED}, \eqref{lmr}, \eqref{au6} and \eqref{au7} we obtain
\begin{align}\label{au8}
  \Ex[D_n] = \frac{6}{\pi^2}h_{n-1} + \Mr(1) +O(n^{-1-|s_1|}),
\end{align}
which by \eqref{au79} yields the desired leading terms:
\begin{Proposition}
\label{P:2}
\begin{align}\label{au9}
  \Ex[D_n] = \frac{6}{\pi^2}\log n + c_0 + c_{-1}n\qw+ O(n^{-1-|s_1|})
\end{align}
for  $c_0:=\Mr(1)+\frac{6}{\pi^2}\gamma$ and $c_{-1}=-\frac3{\pi^2}
$.
\end{Proposition}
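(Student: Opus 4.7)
The plan is to start from the integral identity \eqref{cxED} and split the density of $\gU$ according to the decomposition \eqref{lmr} of Lemma \ref{LM}, namely $\gu(x)=\frac{6}{\pi^2 x}+r(x)$. This yields two pieces: a ``main term'' $\int_0^1 f_n(x)\cdot\frac{6}{\pi^2 x}\,\ddx x$ and a ``remainder term'' $\int_0^1 f_n(x)\,r(x)\,\ddx x$, where $f_n(x)=1-(1-x)^{n-1}$. The main term is handled directly by recognizing it as a constant multiple of $\Mfn(0)$: by Lemma \ref{L99}, $\Mfn(0)=h_{n-1}$, so the main contribution is exactly $\frac{6}{\pi^2}h_{n-1}$.

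For the remainder term, the natural move is to write $f_n(x)=1-(1-x)^{n-1}$ and split into $\Mr(1)=\int_0^1 r(x)\,\ddx x$ minus $\int_0^1 (1-x)^{n-1}r(x)\,\ddx x$. The latter integral I would further split at $x=1/2$: on $(0,1/2)$ the bound \eqref{aur1} gives $r(x)=O(x^{|s_1|})$, and combined with the exponential-type decay of $(1-x)^{n-1}$ (e.g.\ via $(1-x)^{n-1}\le e^{-(n-1)x}$ or a beta integral) this yields $O(n^{-1-|s_1|})$; on $(1/2,1)$ the factor $(1-x)^{n-1}\le 2^{-(n-1)}$ combined with the global integrability \eqref{aur2} gives an exponentially small error. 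This reproduces the estimate \eqref{au8}, that is $\Ex[D_n]=\frac{6}{\pi^2}h_{n-1}+\Mr(1)+O(n^{-1-|s_1|})$.

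The final step is purely mechanical: substitute the known expansion \eqref{au79}, namely $h_{n-1}=\log n+\gamma-\frac{1}{2n}+O(n^{-2})$, into the previous display. The $\log n$ coefficient becomes $6/\pi^2$, the constant term becomes $c_0:=\Mr(1)+\frac{6}{\pi^2}\gamma$, the $n^{-1}$ coefficient becomes $c_{-1}:=-\frac{3}{\pi^2}$, and since $|s_1|\doteq 0.567<1$ the $O(n^{-2})$ error from $h_{n-1}$ is absorbed into the existing $O(n^{-1-|s_1|})$ term. This is exactly the claim.

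The only genuinely delicate point in this route is the behavior of $r$ near $x=1$, where Remark \ref{RLM1-} warns that $r$ is unbounded; but as long as the calculation isolates this region and exploits the extreme smallness of $(1-x)^{n-1}$ there against the mere integrability of $|r|$, no additional work beyond Lemma \ref{LM} is required. In other words, the heavy lifting has already been done in proving Lemma \ref{LM} and Lemma \ref{L99}; the proof of the proposition is essentially a bookkeeping exercise combining \eqref{au8} with \eqref{au79}.
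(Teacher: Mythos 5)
Your proposal follows the paper's own proof exactly: the same decomposition $\gu(x)=\frac{6}{\pi^2x}+r(x)$ from Lemma~\ref{LM}, the same identification of the main term as $\frac{6}{\pi^2}\Mfn(0)=\frac{6}{\pi^2}h_{n-1}$ via Lemma~\ref{L99}, the same split of the remainder integral at $x=1/2$ using \eqref{aur1} and \eqref{aur2} to reach \eqref{au8}, and the same final substitution of \eqref{au79}. Correct, and no meaningful deviation from the paper's argument.
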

In particular, this verifies the conjectured formula \cite[(2.17)]{beta1}.
Moreover, we may compute the constants $\Mr(1)$ 
above by taking the limit as $s\to1$ in \eqref{mr}.
A simple calculation (using a Taylor expansion of $\psi(1+\eps)-\psi(1)$)
yields
\begin{align}\label{aua3}
\Mr(1)
=-\frac{\psi''(1)}{2(\psi'(1))^2}
=\frac{\zeta(3)}{\zeta(2)^2},
\end{align}
using $\psi'(1)=\zeta(2)$ and $\psi''(1)=-2\zeta(3)$,
see \eqref{no3} and \eqref{psi'} or \cite[5.7.4]{NIST}.
Hence,
\begin{align}\label{aua4}
  c_0 = \frac{\zeta(3)}{\zeta(2)^2}+\frac{\gamma}{\zeta(2)}
\doteq  0.795155660439
\end{align}
which agrees to 10 decimals with the numerical estimate in \cite{beta1}.

\subsection{A full asymptotic expansion}
\label{sec:EDfull}
The expansion \eqref{au8} is easily extended to a full asymptotic expansion
of $\E[D_n]$, stated in the introduction as Theorem \ref{TD1}.
\begin{Theorem}\label{TD}
  We have the asymptotic expansion
  \begin{align}\label{aw1}
  \Ex[D_n] \sim 
\frac{6}{\pi^2}h_{n-1} 
+ \frac{\zeta(3)}{\zeta(2)^2}
-\sum_{i=1}^\infty \frac{\gG(|s_i|+1)}{\psi'(s_i)}\frac{\gG(n)}{\gG(n+|s_i|+1)}
\end{align}
where $0>s_1>s_2>\dots$ are the negative roots of $\psi(s)=\psi(1)$.
Alternatively, we have
  \begin{align}\label{aw2}
  \Ex[D_n] \sim 
\frac{6}{\pi^2}\log n
+\sum_{i=0}^\infty c_{i} n^{-i}
+\sum_{j=1}^\infty\sum_{k=1}^\infty c_{j,k}\,n^{-|s_j|-k}
\end{align}
for some coefficients $c_i$ and $c_{j,k}$ that can be found explicitly;
in particular, $c_0$ is given by \eqref{aua4} and $c_1=-3/\pi^2$.
\end{Theorem}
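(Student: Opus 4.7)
The plan is to iterate the argument already carried out for Proposition~\ref{P:2}, replacing the two-term expansion \eqref{lmr} of $\gu(x)$ by the $N$-term refinement \eqref{lmrN} from \refL{LM}(ii). Fix $N\ge1$. Substituting
\begin{equation*}
  \gu(x)=\frac{6}{\pi^2 x}+\sum_{i=1}^N\frac{1}{\psi'(s_i)}x^{|s_i|}+r_N(x)
\end{equation*}
into \eqref{cxED}, the first piece yields $(6/\pi^2)h_{n-1}$ as in \eqref{au6}, and each power term contributes, by \refL{L99} applied at $s=|s_i|+1$,
\begin{equation*}
  \frac{1}{\psi'(s_i)}\intoi f_n(x)\,x^{|s_i|}\dd x=\frac{\Mfn(|s_i|+1)}{\psi'(s_i)}=\frac{1}{\psi'(s_i)(|s_i|+1)}-\frac{\gG(|s_i|+1)}{\psi'(s_i)}\cdot\frac{\gG(n)}{\gG(n+|s_i|+1)}.
\end{equation*}

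Next I would estimate the remainder $\intoi f_n(x)r_N(x)\dd x$. Writing $f_n=1-(1-x)^{n-1}$ and using \eqref{aur2N}, this equals $\widetilde{r_N}(1)-\intoi(1-x)^{n-1}r_N(x)\dd x$. Splitting the last integral at $x=1/2$ as in \eqref{au7}, the piece over $[1/2,1]$ is bounded by $2^{-n+1}\|r_N\|_{L^1}$ and is negligible; the piece over $(0,1/2]$ is bounded using \eqref{aurN} (the factor $|\log x|^{-1}$ being harmless for $x<1/2$) by a constant multiple of $\int_0^{1/2}(1-x)^{n-1}x^{|s_{N+1}|}\dd x=O\bigpar{B(n,|s_{N+1}|+1)}=O\bigpar{n^{-|s_{N+1}|-1}}$. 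Thus the remainder integral equals $\widetilde{r_N}(1)+O\bigpar{n^{-|s_{N+1}|-1}}$.

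The constants accumulated along the way must be consolidated. Integrating \eqref{lmrN} against $x^{s-1}$ and comparing with \eqref{mr} gives, for every $N\ge1$,
\begin{equation*}
  \Mr(1)=\sum_{i=1}^N\frac{1}{\psi'(s_i)(|s_i|+1)}+\widetilde{r_N}(1),
\end{equation*}
so the aggregate constant collapses to $\Mr(1)=\zeta(3)/\zeta(2)^2$ (computed at \eqref{aua3}), independently of $N$. The identity
\begin{equation*}
  \E[D_n]=\frac{6}{\pi^2}h_{n-1}+\frac{\zeta(3)}{\zeta(2)^2}-\sum_{i=1}^N\frac{\gG(|s_i|+1)}{\psi'(s_i)}\frac{\gG(n)}{\gG(n+|s_i|+1)}+O\bigpar{n^{-|s_{N+1}|-1}}
\end{equation*}
follows; since $\gG(n)/\gG(n+|s_i|+1)\asymp n^{-|s_i|-1}$ is strictly decreasing in $i$, letting $N\to\infty$ gives \eqref{aw1} as an asymptotic expansion. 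For \eqref{aw2}, I would expand each ratio $\gG(n)/\gG(n+|s_i|+1)$ by Stirling into $n^{-|s_i|-1}$ times a formal series in $n^{-1}$, combine with the expansion \eqref{an1} of $h_{n-1}$ (whose $-1/(2n)$ term, multiplied by $6/\pi^2$, produces $c_1=-3/\pi^2$), and rearrange the double series in decreasing order of magnitude to obtain the asserted spectrum $\set{-i:i\ge0}\cup\set{-(|s_j|+k):j,k\ge1}$.

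The main obstacle I expect is not analytic but combinatorial bookkeeping: justifying the passage to the $N\to\infty$ asymptotic relation and confirming that the integer-power family and the $s_j$-family of exponents remain cleanly separated after Stirling expansion (in particular that no $n^{-i}$ arises accidentally from the $s_j$-series or vice versa). The analytic estimates themselves reduce to beta integrals and are essentially the same as those already used in \eqref{au7}.
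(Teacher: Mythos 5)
Your proposal is correct and follows essentially the same route as the paper's own (first) proof: substitute the $N$-term refinement \eqref{lmrN} of the density $\gu$ into \eqref{cxED}, evaluate the resulting beta integrals via Lemma~\ref{L99}, estimate the remainder by the split at $x=1/2$ as in \eqref{au7}, and pass to the asymptotic expansion by letting $N\to\infty$. The only organizational difference is where the additive constant is consolidated: the paper keeps $\intoi r(x)\dd x=\Mr(1)$ as a single quantity (splitting $\gu=\frac{6}{\pi^2 x}+r$ first, and expanding $r$ only inside the $\int(1-x)^{n-1}r$ term), whereas you compute the constant contribution $\sum_{i\le N}\frac{1}{\psi'(s_i)(|s_i|+1)}+\widetilde{r_N}(1)$ piecewise and then recognize it as $\Mr(1)$ via the identity obtained by integrating \eqref{lmrN}; the two bookkeepings are algebraically equivalent. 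Your worry about whether the exponent families $\set{i}$ and $\set{|s_j|+k}$ can collide after the Stirling expansion is already ruled out by Lemma~\ref{Lpsi}, which gives $|s_j|\in(j-1,j)$ strictly, so $|s_j|+k$ is never an integer.
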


\begin{proof}
We use again \eqref{hw11}, but we now note also that \eqref{lmr} and
\eqref{lmrN} yield
\begin{align}\label{av3}
r(x)=
\sum_{i=1}^N \frac{1}{\psi'(s_i)} x^{|s_i|}
+ r_N(x),
\qquad 0<x<1.
\end{align}
Hence, similarly to \eqref{au7} but now using also \eqref{aurN} and
\eqref{aur2N} and evaluating beta integrals,
\begin{align}\label{av5}
&\intoi\bigsqpar{1-(1-x)^{n-1}}r(x)\dd x
\\\notag&
=\intoi r(x)\dd x
-\sum_{i=1}^N \frac{1}{\psi'(s_i)} \intoi x^{|s_i|}(1-x)^{n-1}\dd x
-\intoi(1-x)^{n-1}r_N(x)\dd x 
\\\notag&
=\Mr(1)
-\sum_{i=1}^N \frac{1}{\psi'(s_i)}\frac{\gG(|s_i|+1)\gG(n)}{\gG(n+|s_i|+1)}
+\int_0^{1/2}(1-x)^{n-1}O(x^{|s_{N+1}|})\dd x 
\\\notag& \hskip16em{}
+\int_{1/2}^1O\bigpar{2^{-n}|r_N(x)|}\dd x 
\\\notag&
=\Mr(1)
-\sum_{i=1}^N \frac{\gG(|s_i|+1)}{\psi'(s_i)}\frac{\gG(n)}{\gG(n+|s_i|+1)}
+O(n^{-1-|s_{N+1}|})
.\end{align}
Using \eqref{av5} instead of \eqref{au7} in combination with
\eqref{cxED}, \eqref{lmr}, \eqref{au6}, and \eqref{aua3} yields 
  \begin{align}\label{av8}
  \Ex[D_n] = \frac{6}{\pi^2}h_{n-1} + \frac{\zeta(3)}{\zeta(2)^2}
-\sum_{i=1}^N \frac{\gG(|s_i|+1)}{\psi'(s_i)}\frac{\gG(n)}{\gG(n+|s_i|+1)}
+O(n^{-1-|s_{N+1}|}).
\end{align}
Since $N$ is arbitrary,
this shows  the asymptotic expansion \eqref{aw1}.

Finally, \eqref{aw2} follows from \eqref{aw1}.
In fact, for every fixed $b$, we have the
asymptotic expansion
\cite[5.11.13]{NIST}
\begin{align}\label{an2}
  \frac{\gG(n)}{\gG(n+b)}\sim \sum_{k=0}^\infty g_k(b) n^{-b-k},
\end{align}
for some coefficients $g_k(b)$ that can be calculated explicitly
\cite[5.11.15 and 17]{NIST}.
Substituting these expansions and \eqref{an1}
in \eqref{aw1} yields \eqref{aw2}.
\end{proof}

\begin{example}
  Recall from \refL{Lpsi} that
$s_1\doteq-0.567$ and $s_2\doteq-1.628$.
The first terms in \eqref{aw2} thus yield,
with $c_0$ given by \eqref{aua4} and $c_2=-1/(2\pi^2)$ by \eqref{an1},
\begin{align}\label{aw3}
  \Ex[D_n] =
\frac{6}{\pi^2}\log n
+c_0
-\frac{3}{\pi^2}n^{-1}
- \frac{\gG(|s_1|+1)}{\psi'(s_1)}n^{-|s_i|-1}
-\frac{1}{2\pi^2}n^{-2}
+O\bigpar{n^{-|s_1|-2}},
\end{align}
where $-|s_1|-1\doteq-1.567$.
The next terms are constants times
$n^{-|s_1|-2}$ and $n^{-|s_2|-1}$, with exponents $-2.567$ and $-2.628$.
Numerically, the coefficient of $n^{-|s_1|-1}$ is
\\
$-{\gG(|s_1|+1)}/{\psi'(s_1)}\doteq
-0.0943$.
\end{example}

\section{The expected hop-height $\E[L_n]$} \label{SEL1}

We next find a similar asymptotic expansion for the expectation of the 
hop-height  $L_n$ in discrete time,
extending the first terms given in \cite[Theorem 1.2]{beta1} by a different
method.

\begin{Theorem}\label{TL}
  We have the asymptotic expansion
  \begin{multline}\label{tl1}
  \Ex[L_n] \sim 
\frac{3}{\pi^2}h_{n-1}^2+\frac{\zeta(3)}{\zeta(2)^2}h_{n-1}
+\frac{\zeta(3)^2}{\zeta(2)^3}+\frac{1}{10}-\frac{3}{\pi^2}\psi'(n)
\\
+\sum_{i=1}^\infty \frac{\gG(|s_i|+1)}{(|s_i|+1)\psi'(s_i)}
\frac{\gG(n)}{\gG(n+|s_i|+1)}
\end{multline}
where $0>s_1>s_2>\dots$ are the negative roots of $\psi(s)=\psi(1)$.
Alternatively, we have
\begin{multline}\label{sw2x}
  \E[L_n]\sim
\frac{3}{\pi^2}\log^2n 
+ \Bigpar{\frac{\zeta(3)}{\zeta(2)^2}+\frac{\gam}{\zeta(2)}}\log n
+b_0
\\
+\sumk a_k n^{-k}\log n
+\sumk b_k n^{-k}
+ \sumj \sumk c_{j,k}n^{-|s_j|-k}
\end{multline}
for some computable constants $a_k$, $b_k$, $c_{j,k}$;
in particular,
\begin{align}\label{hw21}
  b_0 = 
  \frac{3\gam^2}{\pi^2}
+\frac{\zeta(3)}{\zeta(2)^2}\gam+
\frac{\zeta(3)^2}{\zeta(2)^3}
+\frac{1}{10}
\doteq
0.78234
.\end{align}
\end{Theorem}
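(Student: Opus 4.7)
The plan is to start from identity \eqref{cy3} in \refP{P:1}, which gives $\E[L_n]=\intoi F_n(x)\dd\gU(x)$ with $F_n(x):=\sum_{j=2}^n h_{j-1}\binom{n-1}{j-1}x^{j-1}(1-x)^{n-j}$. The first step is to rewrite $F_n$ in a tractable form: using $h_{j-1}=\int_0^1(1-u^{j-1})/(1-u)\dd u$ and summing the binomial series over $j$, one obtains $F_n(x)=\int_0^x \frac{1-(1-u)^{n-1}}{u}\dd u$, which further splits as $F_n(x)=h_{n-1}+\log x+\int_x^1 \frac{(1-u)^{n-1}}{u}\dd u$; this cleanly isolates the dominant $h_{n-1}$-piece, a logarithmic middle term, and a small tail.

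Next I would apply the inversion estimate \refL{LM} in the form $\gu(x)=\frac{6}{\pi^2 x}+r(x)$, splitting $\E[L_n]$ into a main term $\frac{6}{\pi^2}\intoi F_n(x)/x\dd x$ and a remainder $\intoi F_n(x)r(x)\dd x$. For the main term, I would note that $\intoi F_n(x)/x\dd x=-\Mfn'(0)$ where $\Mfn$ is the Mellin transform of \refL{L99}, and Taylor-expand $\gG(s+1)\gG(n)/\gG(n+s)$ at $s=0$ via $\psi,\psi'$ to get $-\Mfn'(0)=\tfrac12\bigpar{h_{n-1}^2+\zeta(2)-\psi'(n)}$, yielding the contribution $\frac{3}{\pi^2}h_{n-1}^2+\tfrac12-\frac{3}{\pi^2}\psi'(n)$.

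For the remainder $\intoi F_n(x)r(x)\dd x$, I would substitute the three-piece decomposition of $F_n$: the constant piece gives $h_{n-1}\Mr(1)=\frac{\zeta(3)}{\zeta(2)^2}h_{n-1}$ by \eqref{aua3}; the $\log x$ piece gives $\Mr'(1)$, which a short Taylor expansion of \eqref{mr} at $s=1$ identifies (using $\zeta(4)/\zeta(2)^2=\tfrac{2}{5}$) as $\frac{\zeta(3)^2}{\zeta(2)^3}-\tfrac{2}{5}$; combined with the earlier $\tfrac12$, this produces the signature constant $\tfrac12-\tfrac25=\tfrac1{10}$ of \eqref{hw21}. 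The final double integral becomes, after Fubini, $\intoi \frac{(1-u)^{n-1}}{u}R(u)\dd u$ with $R(u):=\int_0^u r(x)\dd x$; substituting the refined expansion \eqref{lmrN} of $r$ turns each pole $s_i$ into a beta integral evaluating to $\frac{\gG(|s_i|+1)}{(|s_i|+1)\psi'(s_i)}\cdot\frac{\gG(n)}{\gG(n+|s_i|+1)}$, with residual error $O(n^{-1-|s_{N+1}|})$ controlled by \eqref{aurN}. Letting $N$ vary then delivers \eqref{tl1}.

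Passing from \eqref{tl1} to \eqref{sw2x} is finally a matter of substituting the asymptotic expansions of $h_{n-1}$ from \eqref{an1}, of $\psi'(n)$ from termwise differentiation of \eqref{5.11.2}, and of each $\gG(n)/\gG(n+|s_i|+1)$ from \eqref{an2}, then collecting by type: the leading $\frac{3}{\pi^2}\log^2 n$, the $\log n$-coefficient $\zeta(3)/\zeta(2)^2+\gamma/\zeta(2)$, and the constant $b_0$ of \eqref{hw21} read off directly. The step I expect to be most delicate is the remainder analysis for $\intoi F_n(x)r(x)\dd x$: one must check that the endpoint singularity of $r$ as $x\upto 1$ noted in \refR{RLM1-} does not spoil the Fubini manipulations, and verify the partial-fraction identities $\sum_i 1/((|s_i|+1)\psi'(s_i))=\Mr(1)$ and $\sum_i 1/((|s_i|+1)^2\psi'(s_i))=-\Mr'(1)$ that make the constant $1/10$ emerge consistently from the pole-by-pole decomposition.
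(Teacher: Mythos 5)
Your proposal is correct and follows the paper's first proof of Theorem~\ref{TL} essentially verbatim: same starting identity \eqref{cy3}, same integral representation of $H_n$ via $h_k=\intoi\frac{1-u^k}{1-u}\dd u$, same split of $\gu$ into $\frac{6}{\pi^2 x}+r(x)$, the main term via $-\Mfn'(0)$, and the remainder via $\Mr(1)$, $\Mr'(1)$, and pole-by-pole beta integrals. One small correction to your closing worry: the ``partial-fraction identities'' $\sum_i\frac{1}{(|s_i|+1)\psi'(s_i)}=\Mr(1)$ and $\sum_i\frac{1}{(|s_i|+1)^2\psi'(s_i)}=-\Mr'(1)$ are not needed and need not be verified --- the constant $\frac{1}{10}$ comes purely from $\frac12-\frac25$ via the two Taylor expansions of the Mellin transform at $s=0$ and $s=1$, while the pole-sum terms are all $O(n^{-1-|s_i|})$ and contribute nothing to $b_0$; the expansion \eqref{lmrN} is only asymptotic with an error term, so there is no infinite-series consistency to enforce.
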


\begin{Remark}\label{Rc0}
The coefficient for $\log n$ in
\eqref{sw2x} (found already in \cite{beta1}) 
equals the constant term $c_0$ in the asymptotic expansion \eqref{aw2} of
$\E[D_n]$. 
\end{Remark}

\begin{proof}
We write \eqref{cy3} as
\begin{align}\label{aud3}
\Ex[L_n] &
=\intoi H_{n}(x)\dd\gU(x)
,\end{align}
where we 
(substituting $j=k+1$)
define the function,
\begin{align}\label{aud4}
  H_n(x):= \sum_{k=1}^{n-1} h_{k}\binom {n-1}{k} x^{k}(1-x)^{n-1-k},
\qquad 0\le x\le 1.
\end{align}
To obtain a more tractable form of $H_n$ for our analysis, we note that
\begin{align}\label{aud5}
  h_k=\sum_{i=1}^k\frac{1}{i}=\sum_{i=1}^k\intoi u^{i-1}\dd u
=\intoi\frac{1-u^k}{1-u}\dd u.
\end{align}
Hence, 
\eqref{aud4} yields
(for convenience shifting the index to $n+1$)
\begin{align}\label{aud6}
  H_{n+1}(x)&
=\sum_{k=1}^{n}\intoi\ddx u\frac{1-u^k}{1-u}\binom {n}{k} x^{k}(1-x)^{n-k}
\\\notag&
=\intoi\frac{\ddx u}{1-u}
\sum_{k=0}^{n}\binom {n}{k} \bigpar{x^{k}-(ux)^k}(1-x)^{n-k}
\\\notag&
=\intoi\frac{\ddx u}{1-u}
\bigpar{1-(1-x+ux)^n}
\\\notag&
=\intoi
\bigpar{1-(1-xw)^n}
\frac{\ddx w}{w}
\\\notag&
=\intox
\bigpar{1-(1-y)^n}
\frac{\ddx y}{y}
.\end{align}
Since \eqref{aud4} yields $H_n(1)=h_{n-1}$
(which also follows by \eqref{aud6} and  \eqref{su3}), 
\eqref{aud6} yields also
\begin{align}
  \label{aud7}
H_{n}(x)
=H_{n}(1)-\int_x^1 \bigpar{1-(1-y)^{n-1}}\frac{\ddx y}{y}
=h_{n-1}-\int_x^1 \bigpar{1-(1-y)^{n-1}}\frac{\ddx y}{y}
.\end{align}

By \eqref{aud4}, $H_n(x)$ is a polynomial on $\oi$ with $H_n(0)=0$, and thus
the Mellin transform $\MHn(s)$ exists for $\Re s>-1$.
When $\Re s>0$, we have by \eqref{aud7} and \eqref{su1}
\begin{align}\label{so1}
  \MHn(s)&=\intoi x^{s-1}H_n(x)\dd x
\\\notag&
=\frac{h_{n-1}}{s}-\iint_{0<x<y<1}x^{s-1}\bigpar{1-(1-y)^{n-1}}y\qw \dd x\dd y
\\\notag&
=\frac{h_{n-1}}{s}-\frac{1}{s}\int_{0<y<1}y^{s-1}\bigpar{1-(1-y)^{n-1}} \dd y
\\\notag&
=\frac{1}{s}\bigpar{h_{n-1}-\Mfn(s)},
\end{align}
and this extends to $\Re s>-1$ by analytic continuation.
(The \rhs{} is analytic in this domain by \refL{L99} and \eqref{su3}.)
For $s=0$, the \rhs{} of
\eqref{so1} is interpreted as a limit in the standard way, giving
\begin{align}\label{so10}
\MHn(0)= -\Mfn'(0)
.\end{align}

We now use \eqref{aud3}, \eqref{aud6}, and the expansion \eqref{lmrN}
for the density $f(x)$ of $\gU$. 
First, by \eqref{aud3} and  \eqref{lmr},
\begin{align}\label{sv1}
  \E[L_n]
=
\frac{6}{\pi^2}\intoi H_n(x) \frac{1}{x}\dd x
+\intoi H_n(x) r(x)\dd x.
\end{align}
For the main term, we 
note that 
$\intoi H_n(x) x\qw\dd x=\MHn(0)$,
which by \eqref{so10} 
equals $-\Mfn'(0)$.
To compute this derivative, we use \eqref{su2} and make a Taylor expansion
of $\gG(s+1)\gG(n)/\gG(n+s)$ to obtain, recalling \eqref{no2} and
\eqref{no3}--\eqref{no4},
\begin{align}\label{sv4}
  \intoi H_n(x)\frac{1}{x}\dd x &
=\frac12 \frac{\ddx^2}{\dd s^2}\frac{\gG(s+1)\gG(n)}{\gG(n+s)}\Bigr|_{s=0}
\\\notag&
=\tfrac12\bigsqpar{\bigpar{\psi(1)-\psi(n)}^2+\psi'(1)-\psi'(n)}
\\\notag&
=\tfrac12 h_{n-1}^2 +\tfrac{\pi^2}{12}-\tfrac12\psi'(n)
.\end{align}

For the final term in \eqref{sv1}, we use  \eqref{aud7} and obtain,
recalling \eqref{aua3},
\begin{align}\label{sv5}
&  \intoi H_n(x) r(x)\dd x
\\\notag&
= h_{n-1}\intoi r(x)\dd x
-\iint_{0<x<y<1}\frac{1-(1-y)^{n-1}}y r(x)\dd x\dd y
\\\notag&
=\Mr(1) h_{n-1}
-\iint_{0<x<y<1}\frac{1}y r(x)\dd x\dd y
+\iint_{0<x<y<1}\frac{(1-y)^{n-1}}y r(x)\dd x\dd y
\\\notag&
=\Mr(1) h_{n-1}
+\intoi(\log x) r(x)\dd x
+\intoi\frac{(1-y)^{n-1}}y \int_0^yr(x)\dd x\dd y
.\end{align}
A differentiation under the integral sign in \eqref{mellin} shows that 
\begin{align}\label{r'0}
\intoi(\log x) r(x)\dd x=\Mrprime(1).  
\end{align}
For the final integral in \eqref{sv5}
we use the expansion \eqref{av3}.
Note that a term $x^s$ ($s\ge0$) in $r(x)$ when substituted into
this integral
yields
\begin{align}\label{sv6}
 \frac{1}{s+1}\intoi \frac{(1-y)^{n-1}}y y^{s+1}\dd y
=\frac{\gG(s+1)\gG(n)}{(s+1)\gG(n+s+1)}
.\end{align}
For the remainder term $r_N$, we split the integral into two parts as in
\eqref{av5}, and use \eqref{aurN} for $y\in(0,\frac12)$ and \eqref{aur2N} for
$y\in(\frac12,1)$. 
Hence, \eqref{sv5} and \eqref{av3} yield
\begin{align}\label{sv7}
&  \intoi H_n(x) r(x)\dd x
\\\notag&
=\Mr(1) h_{n-1}
+\Mrprime(1) 
+
\sum_{i=1}^N \frac{1}{\psi'(s_i)}
\frac{\gG(|s_i|+1)\gG(n)}{(|s_i|+1)\gG(n+|s_i|+1)}
+ O\bigpar{n^{-|s_{N+1}|-1}}.
\end{align}

To find $\Mrprime(1)$ we use \eqref{mr} and find by
a Taylor expansion of $\psi(s)$,
using $\psi'(1)=\zeta(2)=\pi^2/6$, $\psi''(1)=-2\zeta(3)$ and
$\psi'''(1)=6\zeta(4)=\pi^4/15$ 
(see \eqref{no3} and \eqref{psi'} or \cite[5.7.4]{NIST}),
\begin{align}\label{cxx2b}
  \Mrprime(1)
=
\frac{(\psi''(1))^2}{4(\psi'(1))^3}-\frac{\psi'''(1)}{6(\psi'(1))^2}
=\frac{\zeta(3)^2}{\zeta(2)^3}-\frac{\zeta(4)}{\zeta(2)^2}
=\frac{\zeta(3)^2}{\zeta(2)^3}-\frac{2}{5}
.\end{align}

We obtain \eqref{tl1} by \eqref{sv1}, \eqref{sv4}, \eqref{sv7},
\eqref{aua3}, and \eqref{cxx2b}.

Finally,
we obtain \eqref{sw2x} from \eqref{tl1} by substituting 
\eqref{an1}, the corresponding asymptotic expansion of $\psi'(n)$
%
%
(obtained from \eqref{5.11.2} by termwise differentiation),
and \eqref{an2}, and then rearranging the terms.
\end{proof}

\section{The length of $\CTCS(n)$}\label{SgL}

Recall from \eqref{qm1} that the length $\gL_n$ of $\CTCS(n)$ satisfies the identity
\begin{align}\label{qm12}
  \E[\gL_n]& 
= \intoi \frac{1}{x}\Bigpar{1-(1-x)^n-nx(1-x)^{n-1}}\dd\gU(x)
. \end{align}

We denote the integrand in the integral by
\begin{align}\label{qm2}
  \gl_n(x):=\bigpar{1-(1-x)^n-nx(1-x)^{n-1}}/x.
\end{align}
Then its Mellin transform is, by beta integrals and simple algebra,
\begin{align}\label{qm3}
  \M\gl_n(s)
&
=\intoi\Bigpar{x^{s-2}-x^{s-2}(1-x)^n-nx^{s-1}(1-x)^{n-1}}\dd x
\\\notag&
=\frac{1}{s-1}-\frac{\gG(s-1)\gG(n+1)}{\gG(n+s)}-n\frac{\gG(s)\gG(n)}{\gG(n+s)}
\\\notag&
=\frac{1}{s-1}\Bigpar{1-\frac{\gG(s+1)\gG(n+1)}{\gG(n+s)}},
\end{align}
first assuming $\Re s>1$ and then by analytic continuation for $\Re s>-1$
(the domain where $\M\gl_n(s)$ exist, for any $n\ge2$); note that $s=1$ is a
removable singularity in \eqref{qm3} and \emph{not} a pole.
In particular, \eqref{qm3} yields
\begin{align}\label{qm4}
    \M\gl_n(0)=-(1-n)=n-1.
\end{align}

We use \refL{LM} and obtain from \eqref{lmr},  \eqref{qm12}, and \eqref{qm2}
\begin{align}\label{qm5}
  \E[\gL_n] &=\intoi\gl_n(x)\gu(x)\dd x
=\frac{6}{\pi^2}\intoi\gl_n(x)x\qw\dd x+\intoi\gl_n(x)r(x)\dd x
.\end{align}
The main term here is, using \eqref{qm4},
\begin{align}\label{qm6}
\frac{6}{\pi^2}\intoi\gl_n(x)x\qw\dd x
=\frac{6}{\pi^2}\M\gl_n(0)
=\frac{6}{\pi^2}(n-1)
\end{align}
while the remainder term in \eqref{qm4} can be estimated as,
by arguments as in \eqref{au7},
\begin{align}\label{qm7}
&\intoi\gl_n(x)r(x)\dd x
\\\notag&\quad
=\intoi x\qw r(x)\dd x
-\intoi (1-x)^nx\qw r(x)\dd x
-n\intoi (1-x)^{n-1} r(x)\dd x
\\\notag&\quad
=\Mr(0)+O\bigpar{n^{-|s_1|}}
.\end{align}
Furthermore, $\psi(s)$ has a pole at $s=0$, and thus \eqref{mr} yields
\begin{align}\label{qm8}
  \Mr(0)= 0-\frac{6}{\pi^2}\cdot\frac{1}{-1}
= \frac{6}{\pi^2}.
\end{align}
Consequently, by \eqref{qm5}--\eqref{qm8},
\begin{align}\label{qm9}
\E[\gL_n] = \frac{6}{\pi^2}(n-1)+\frac{6}{\pi^2}+O\bigpar{n^{-|s_1|}}
=\frac{6}{\pi^2}n + O\bigpar{n^{-|s_1|}}.
\end{align}

We may easily extend the argument above and obtain the following full asymptotic
expansion.

\begin{theorem}\label{TgL}
\begin{align}\label{tgl}
  \E[\gL_n]
\sim
\frac{6}{\pi^2}n-
\sumi
\frac{\gG(|s_i|+2)}{|s_i|\psi'(s_i)}\frac{\gG(n+1)}{\gG(n+|s_i|+1)}.
\end{align}
Alternatively, for some coefficients $c_{i,k}$ that can be found explicitly,
  \begin{align}\label{tgl2}
  \Ex[\gL_n] \sim 
\frac{6}{\pi^2} n
+\sum_{j=1}^\infty\sum_{k=0}^\infty c_{j,k}\,n^{-|s_j|-k}
\end{align}
\end{theorem}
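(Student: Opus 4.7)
The plan is to mimic the derivation of the full expansion \eqref{av8} in the proof of Theorem \ref{TD}, but starting from the integral representation \eqref{qm12} and the Mellin formula \eqref{qm3}. Fix $N\ge1$ and substitute the full expansion \eqref{lmrN} of $\gu$ into \eqref{qm12}, producing three groups of terms: one main term $(6/\pi^2)\intoi \gl_n(x)x\qw \dd x$, a finite sum of pole contributions $\psi'(s_i)\qw \intoi \gl_n(x)x^{|s_i|}\dd x$ for $i=1,\dots,N$, and a remainder $\intoi \gl_n(x) r_N(x)\dd x$. The first evaluates via \eqref{qm4} to $(6/\pi^2)(n-1)$, and each pole integral equals $\M\gl_n(|s_i|+1)=|s_i|\qw\bigpar{1-\gG(|s_i|+2)\gG(n+1)/\gG(n+|s_i|+1)}$ by \eqref{qm3}.

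For the remainder, I would split $\gl_n(x)=x\qw-(1-x)^n x\qw-n(1-x)^{n-1}$ as in \eqref{qm7}. The leading piece gives $\intoi x\qw r_N(x)\dd x=\M{r_N}(0)$, while the other two are $O\bigpar{n^{-|s_{N+1}|}}$ by the same beta-integral estimates as in \eqref{au7}: on $(0,\frac12)$ one uses the bound $r_N(x)=O(x^{|s_{N+1}|})$ coming from \eqref{aurN}, and on $(\frac12,1)$ one uses \eqref{aur2N} together with the geometric decay of $(1-x)^{n-1}$. The new ingredient is the explicit value of $\M{r_N}(0)$. Generalizing \eqref{mr} one has
\begin{align*}
\M{r_N}(s)=\frac{1}{\psi(s)-\psi(1)}-\frac{6/\pi^2}{s-1}-\sum_{i=1}^N\frac{1}{\psi'(s_i)(s+|s_i|)},
\end{align*}
and evaluating at $s=0$ (where the first term vanishes because $\psi$ has a pole at $0$) yields
$\M{r_N}(0)=6/\pi^2-\sum_{i=1}^N\bigpar{|s_i|\psi'(s_i)}\qw$.

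Adding the three contributions, the $N$ constants $\bigpar{|s_i|\psi'(s_i)}\qw$ coming from the pole integrals cancel the corresponding terms inside $\M{r_N}(0)$, and the residual $6/\pi^2$ combines with $(6/\pi^2)(n-1)$ to produce the clean linear leading term $(6/\pi^2)n$. Since $N$ is arbitrary, this proves \eqref{tgl}, and \eqref{tgl2} follows by inserting the gamma-ratio expansion \eqref{an2} into each term of \eqref{tgl} and regrouping by powers of $n$, exactly as in the passage from \eqref{aw1} to \eqref{aw2}. The main subtlety is keeping track of the arithmetic of these cancellations; they work out precisely because $\gl_n$ was defined in \eqref{qm2} so that its Mellin transform \eqref{qm3} is regular at both $s=0$ and $s=1$, which is what makes $\E[\gL_n]$ grow linearly without a logarithmic correction.
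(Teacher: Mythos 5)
Your proposal is correct and is essentially the paper's own Method 1 argument: the paper's terse proof inserts \eqref{av3} only in the last two integrals of \eqref{qm7}, thereby keeping $\Mr(0)=6/\pi^2$ intact and obtaining the gamma-ratio terms directly from the beta integrals, whereas you substitute \eqref{lmrN} all at once into \eqref{qm12}, use $\M\gl_n(|s_i|+1)$ to evaluate the pole integrals, and then show that the resulting constants $(|s_i|\psi'(s_i))^{-1}$ cancel against the matching terms inside $\M{r_N}(0)$. The two bookkeepings are equivalent (the cancellation you exhibit is exactly what the paper's grouping avoids having to display), and your identification $\gG(|s_i|+2)/|s_i|=(|s_i|+1)\gG(|s_i|)$ matches the paper's coefficient.
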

Note that the terms in the sum \eqref{tgl} have orders $n^{-|s_i|}$.

\begin{proof}
This is similar to previous proofs, so we omit some details. 

We may use \eqref{av3} in the last two integrals in \eqref{qm7};
then calculations similar to \eqref{av5} yield \eqref{tgl}.
Finally, \eqref{tgl} implies \eqref{tgl2} by \eqref{an2}.
\end{proof}

\section{Alternative proofs via a Parseval formula}\label{SParseval}

We now show that one may  skip the intermediate step of obtaining 
asymptotics for the measure $\gU$ by using the following version of
Parseval's formula (also called Plancherel's formula) 
for Mellin transforms: see \refApp{AParseval} for a proof and references.

Recall that an integral $\intoooo f(x)\dd x$ exists \emph{conditionally}
if $f$ is locally integrable and the symmetric limit
$\lim_{A\to\infty}\int_{-A}^A f(x)\dd x$ exists (and is finite); this limit
is then defined to be 
the integral $\intoooo f(x)\dd x$. 
We use the same terminology for line integrals $\intgs$.


\begin{Lemma}\label{LP}
Suppose that $f$ is a locally integrable function and $\mu$ a measure
on $\bbR_+$,
and that $\gs\in\bbR$ is such that $\intoo x^{\gs-1}|f(x)|<\infty$ 
and $\intoo x^{-\gs}\dd\mu<\infty$, i.e., the
Mellin transforms $\Mf(s)$ and $\Mmu(1-s)$ are defined  when 
$\Re s=\gs$. 
Suppose also  that the integral $\intgs\Mf(s)\M\mu(1-s)\dd s$ converges
at least conditionally.
Suppose further that $x^\gs f(x)$ is bounded and that $f$ is 
$\mu$-a.e.\ continuous.
Then 
\begin{align}\label{lp}
 \intoo f(x)\dd\mu(x) = \frac{1}{2\pi\ii}\intgs\Mf(s)\M\mu(1-s)\dd s.
\end{align}
\end{Lemma}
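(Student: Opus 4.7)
The approach is to translate the multiplicative picture on $\bbR_+$ into the additive picture on $\bbR$ via $x=e^u$, reducing the claim to a Plancherel-type identity between an $L^1$ function and a finite measure on the line. The hypotheses of the lemma are tailored to that translation and will directly furnish the integrability and boundedness needed.

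Concretely, I would set $g(u):=f(e^u)\,e^{\gs u}$ and let $\mu_*$ denote the pushforward of the measure $y^{-\gs}\dd\mu(y)$ under $y\mapsto u=\log y$. A brief unwinding of definitions shows that $g\in L^1(\bbR)$ (from $\intoo x^{\gs-1}|f|\dd x<\infty$), $g$ is bounded (from $x^\gs f(x)$ bounded), $\mu_*$ is a finite measure (from $\intoo y^{-\gs}\dd\mu<\infty$), and $g$ is $\mu_*$-a.e.\ continuous. Moreover $\intoo f\,\dd\mu=\int_\bbR g\,\dd\mu_*$ and, with the Fourier convention $\widehat h(\xi):=\int h(u)e^{-\ii\xi u}\dd u$,
\begin{align*}
\Mf(\gs+\ii t)=\widehat g(-t),\qquad \Mmu(1-\gs-\ii t)=\widehat{\mu_*}(t).
\end{align*}
After the substitution $s=\gs+\ii t$, the claim reduces to the Plancherel identity
\begin{align*}
\int_\bbR g\,\dd\mu_* \;=\; \frac{1}{2\pi}\int_{-\infty}^{\infty}\widehat g(-t)\,\widehat{\mu_*}(t)\,\dd t,
\end{align*}
the right-hand side interpreted as a conditional limit.

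The central computation is the truncated version: for fixed $A>0$, $g\in L^1$ and $\mu_*$ finite allow Fubini on $[-A,A]$ to yield
\begin{align*}
\frac{1}{2\pi}\int_{-A}^A\widehat g(-t)\widehat{\mu_*}(t)\,\dd t \;=\; \int_\bbR (D_A*g)(u)\,\dd\mu_*(u),
\end{align*}
where $D_A(w)=\sin(Aw)/(\pi w)$ is the Dirichlet kernel. By hypothesis the left side converges to some $L$ as $A\to\infty$; however $D_A*g$ admits no uniform bound, so one cannot interchange the limit with the $\mu_*$-integral directly. The remedy is to replace Dirichlet by Fejér means. Writing $h(t):=\widehat g(-t)\widehat{\mu_*}(t)$ and $S(A):=\int_{-A}^A h(t)\dd t$, a short integration by parts gives
\begin{align*}
J_A:=\int_{-A}^A\bigpar{1-|t|/A}h(t)\dd t = S(A)-A^{-1}\int_0^A t\,\bigpar{h(t)+h(-t)}\dd t,
\end{align*}
and since $S(A)\to 2\pi L$ and Cesaro averages of a convergent quantity inherit its limit, one deduces $J_A\to 2\pi L$. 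A second application of Fubini identifies $J_A=2\pi\int(F_A*g)\dd\mu_*$ with $F_A$ the Fejér kernel. Because $F_A\ge0$ with $\int F_A=1$, one has $|F_A*g|\le\|g\|_\infty$ pointwise and $F_A*g(u)\to g(u)$ at every continuity point of $g$; dominated convergence, using that $\mu_*$ is finite and $g$ is bounded and $\mu_*$-a.e.\ continuous, then gives $\int(F_A*g)\dd\mu_*\to\int g\,\dd\mu_*$. Combining, $\int g\,\dd\mu_*=L$, which translates back to the statement of the lemma.

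The only non-routine point is this interchange of limit with $\mu_*$-integral: Dirichlet partial integrals admit no dominant, so one must route through a summability method, and Fejér is the natural choice because the Fejér kernel is a positive approximate identity and hence contractive on $L^\infty$. The accompanying Abel-type fact that a symmetric conditional limit coincides with its Cesaro limit is elementary, but it is the bridge that lets the Plancherel identity be read off from the hypotheses as stated.
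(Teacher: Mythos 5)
Your proposal is correct and takes essentially the same approach as the paper: reduce to the Fourier transform on $\bbR$ via $x=e^u$ (absorbing the shift by $\gs$ along the way), then route through Fej\'er summability because the Dirichlet partial integrals lack a dominant. The only cosmetic difference is that the paper writes the Fej\'er weighting directly as the Ces\`aro average $\int_0^1\int_{-us}^{us}\hnu(-t)\hf(t)\dd t\dd u$, whereas you reach the same conclusion by an integration by parts plus the ``Ces\`aro means inherit limits'' fact; these are the same computation.
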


We may apply this lemma to \eqref{cxED} and similar formulas,
and then obtain an asymptotic expansion from the complex line integral in
\eqref{lp} by shifting the line of integration using  residue calculus.
This gives an alternative method to obtain the asymptotic expansions found
above. The new method is perhaps less intuitive that the method used above, 
and although it does not require \refL{LM}, it requires some technical
arguments and estimates in the complex plane, somewhat similar to the proof
of \refL{LM}.
On the other hand, granted these technical details, 
the method is straightforward and presents the resulting
asymptotics in a more structured form as a sum of residues
(see for example \eqref{qk4}).
This is perhaps the main advantage of the method;
it is not important for the simple
cases studied so far, but it will be essential in \refS{SEDk} when we 
look at the more complicated case of higher moments.

Parseval's formula for Mellin transforms has long been used to derive
asymptotic expansions for various integrals and integral transforms
in a way  similar to our use here, see for example
\cite{Handelsman-Lew} and \cite{Paris-Kaminski}, but this is to our
knowledge the first application to combinatorial probability.
(Mellin transforms are used in other ways in many combinatorial problems,
see for example \cite{FGD}.)

\subsection{Asymptotics of $\E [D_n]$: method 2} \label{SED2}

As a warmup, we return to $\E [D_n]$ and give a second proof of \refT{TD},
using \refL{LP} instead of \refL{LM}.
We begin with some simple estimates that will be used repeatedly.

\begin{lemma}\label{Lhw6}
For $n\ge2$ and any complex $s=\gs+\ii\tau$, we have
\begin{align}\label{hw6a}
\lrabs{  \frac{\gG(s)\gG(n)}{\gG(s+n)}}&
=\frac{\gG(n)}{|s(s+1)|\prod_{j=2}^{n-1}|s+j|}
\le \frac{\gG(n)}{|s(s+1)|\prod_{j=2}^{n-1}|\gs+j|}
\\\notag&
= \frac{\gG(\gs+2)\gG(n)}{|s(s+1)|\,\gG(\gs+n)}
.\end{align}
Hence:
\begin{romenumerate}
\item 
For a fixed $n\ge2$, uniformly for $\gs$ in any compact subset of $(-1,\infty)$,
\begin{align}\label{hw6b}
\lrabs{  \frac{\gG(s)\gG(n)}{\gG(s+n)}}&
=
O\bigpar{|s|^{-2}}
.\end{align}

\item 
For a fixed $\gs>-1$, uniformly for all $n$,
\begin{align}\label{hw6c}
\lrabs{  \frac{\gG(s)\gG(n)}{\gG(s+n)}}&
=O\bigpar{|s|^{-2}n^{-\gs}}
.\end{align}
\end{romenumerate}
\end{lemma}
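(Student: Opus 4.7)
The plan is to reduce everything to the functional equation $\gG(s+n) = s(s+1)\cdots(s+n-1)\gG(s)$, which yields the exact identity
\[
\frac{\gG(s)\gG(n)}{\gG(s+n)} = \frac{(n-1)!}{s(s+1)(s+2)\cdots(s+n-1)}.
\]
Taking absolute values and pulling out the first two factors gives the first equality of \eqref{hw6a}. For the inequality, write $s = \sigma + i\tau$ so that $|s+j|^2 = (\sigma+j)^2 + \tau^2 \ge (\sigma+j)^2$. Since $\sigma > -1$ and $j \ge 2$ force $\sigma + j > 1 > 0$, we have $|s+j| \ge \sigma + j$ for each $j = 2, \ldots, n-1$. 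The telescoping identity $\prod_{j=2}^{n-1}(\sigma+j) = \gG(\sigma+n)/\gG(\sigma+2)$ then produces the gamma-ratio form on the right of \eqref{hw6a}.

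For part (i), with $n$ fixed and $\sigma$ ranging over a compact subset of $(-1, \infty)$, the factor $\gG(\sigma+2)\gG(n)/\gG(\sigma+n)$ is bounded by continuity. The remaining factor $1/|s(s+1)|$ satisfies $|s(s+1)| \sim |s|^2$ as $|s|\to\infty$, because uniform boundedness of $\sigma$ forces $|\tau|\to\infty$ and then $|s+1|/|s|\to 1$. This yields \eqref{hw6b} in the standard sense that the $O$-term is to be read for $|s|$ large (hence automatically bounded away from the poles at $0$ and $-1$).

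For part (ii), with $\sigma > -1$ fixed, I would combine \eqref{hw6a} with the standard asymptotic $\gG(n)/\gG(n+\sigma) = O(n^{-\sigma})$ (a consequence of Stirling, cf.\ \eqref{an2}), so the factor $\gG(\sigma+2)\gG(n)/\gG(\sigma+n) = O(n^{-\sigma})$ uniformly in $n$. The factor $1/|s(s+1)|$ is independent of $n$ and contributes the $O(|s|^{-2})$; multiplying yields \eqref{hw6c}.

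There is no serious obstacle here: the lemma is essentially bookkeeping built on the Gamma functional equation and a Stirling-type ratio. The only point requiring mild care is tracking the uniformity — in $\sigma$ for (i) and in $n$ for (ii) — but this is built directly into the \rhs{} of \eqref{hw6a}, so both parts follow by inspection once that identity is in hand.
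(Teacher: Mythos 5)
Your proposal is correct and follows essentially the same route as the paper: the exact identity from the Gamma functional equation, the bound $|s+j|\ge\gs+j>0$ for $j\ge2$ when $\gs>-1$, and then the gamma-ratio asymptotic $\gG(n)/\gG(n+\gs)=O(n^{-\gs})$ for part (ii). The only divergence is in part (i): you deliberately read the $O(|s|^{-2})$ bound as an asymptotic for $|s|\to\infty$ and wave off small $|s|$, whereas the paper's argument (showing $\gs\le C(\gs+1)\le C|s+1|$, hence $|s|\qw\le C|s+1|\qw$, uniformly for $\gs$ in a compact subset of $(-1,\infty)$) establishes the stated uniform bound for \emph{all} $s$ — which does hold, even near the pole at $s=0$, since there the left side grows like $|s|\qw$ while $|s|\qww$ grows faster; this global form is, strictly speaking, what the lemma as written asserts and what part (ii) implicitly reuses.
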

\begin{proof}
  First, \eqref{hw6a} is elementary, using $\gG(z+1)=z\gG(z)$.
If $\gs>-1+\gd$ for some $\gd>0$, then $\gs\le C(\gs+1)\le C|s+1|$
for some $C=C(\gd)$,
and it follows that $|s|\le C|s+1|$ and thus $|s+1|\qw\le C |s|\qw$.
Hence \eqref{hw6b} follows from \eqref{hw6a}.
Furthermore, for a fixed $\gs$, $\gG(n)/\gG(n+\gs)\sim n^{-\gs}$ as \ntoo,
see \cite[5.11.12]{NIST} (or \eqref{an2}), and thus
$\gG(n)/\gG(n+\gs)= O\bigpar{n^{-\gs}}$. Hence, \eqref{hw6c} too follows from
\eqref{hw6a}.
\end{proof}

\begin{lemma}\label{Lhw7}
  \begin{romenumerate}
  \item \label{Lhw7A}
For $s=\gs+\ii\tau$ with any fixed real $\gs\notin\set{1-s_i:i\ge0}$, we have 
\begin{align}\label{hw7a}
  |\psi(1-s)-\psi(1)|\ge c 
\end{align}
for some $c=c(\gs)>0$.
\item\label{Lhw7B} 
The lower bound \eqref{hw7a} holds uniformly for $\gs$ in any compact subset
of $\bbR$ and $|\gt|\ge1$.
  \end{romenumerate}
\end{lemma}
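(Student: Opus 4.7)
The plan is to reduce both parts to a compactness plus asymptotics argument, using only two inputs already established: the reality of the zeros of $\psi(w)-\psi(1)$ from \refL{Lpsi}, and the asymptotic $\psi(w)\sim\log w$ (valid in any sector $|\arg w|\le\pi-\gd$) from \eqref{5.11.2} / \eqref{m4}. Write $w=1-s=1-\gs-\ii\tau$, so that along a vertical line $\Re s=\gs$ we have $|\Im w|=|\tau|$ and $|\arg w|\le\pi/2+\epsilon$ for $|\tau|$ large, keeping us in a good sector.

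For (i), fix $\gs\notin\set{1-s_i:i\ge0}$ and define $g(\tau):=|\psi(1-\gs-\ii\tau)-\psi(1)|$, interpreted as $+\infty$ at any pole of $\psi(1-s)$ (these can only occur if $\gs\in\set{1,2,3,\dots}$ and $\tau=0$). This function is continuous on $\bbR$ in the extended sense. By \refL{Lpsi}, every zero of $\psi(\cdot)-\psi(1)$ is real, so $g(\tau)=0$ forces $\tau=0$ and $1-\gs=s_i$ for some $i$, which our hypothesis excludes; thus $g(\tau)>0$ for all $\tau\in\bbR$ (including trivially at the possible pole at $\tau=0$). Finally, $\psi(1-s)\sim\log(1-s)$ gives $g(\tau)\to\infty$ as $|\tau|\to\infty$, so $g$ attains a strictly positive infimum $c=c(\gs)>0$.

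For (ii), let $K\subset\bbR$ be compact, and consider $g(\gs,\tau)$ on $K\times\set{|\tau|\ge1}$. On this set there are no poles of $\psi(1-s)$ (since any pole would require $\tau=0$), so $g$ is jointly continuous. The asymptotic \eqref{m4}, being uniform on the sector under consideration, yields $g(\gs,\tau)\to\infty$ as $|\tau|\to\infty$ uniformly for $\gs\in K$; pick $T$ so that $g\ge1$ whenever $\gs\in K$ and $|\tau|\ge T$. On the compact set $K\times\set{1\le|\tau|\le T}$, $g$ is continuous and (again by the reality of the zeros from \refL{Lpsi}) strictly positive, so attains a positive minimum $c'$. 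Then $g\ge\min(1,c')>0$ uniformly on $K\times\set{|\tau|\ge1}$.

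There is no substantial obstacle — the proof is essentially continuity plus compactness, with the key structural input being \refL{Lpsi}. The only mild subtlety is bookkeeping around the possible poles of $\psi(1-s)$ at positive integer $\gs$ with $\tau=0$: in (i) these only strengthen the bound (the function equals $+\infty$ there), and in (ii) they are excluded outright by the hypothesis $|\tau|\ge1$.
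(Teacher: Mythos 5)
Your proof is correct and follows essentially the same route as the paper's: non-vanishing via the reality of the zeros (Lemma~\ref{Lpsi}), growth to $\infty$ via the asymptotics \eqref{5.11.2}, and a compactness argument to extract a positive lower bound. You spell out the pole bookkeeping more explicitly than the paper's terse proof, but the underlying argument is identical.
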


\begin{proof}
The \lhs{} of \eqref{hw7a} is in both cases 
a continuous function of $s$ that is 
non-zero on the given sets of $s$ by \refL{Lpsi};
furthermore it tends to $\infty$ by \eqref{5.11.2}
as $|\tau|\to\infty$ (with $\gs$ fixed or in a compact set).
Hence \eqref{hw7a} holds on the given sets of $s$ by a compactness argument.
\end{proof}

\begin{proof}[Second proof of \refT{TD}]
We apply \refL{LP} to the measure $\gU$ and the function
$f_n$ in \eqref{fn}.
Note that $f_n$ is continuous
everywhere except at $1$, and that $\gU\set1=0$ since $\gU$ is
concentrated on $\oio$ (by \eqref{e7}, because each $P_{t,1}<1$ a.s.);
hence $f_n$ is continuous  $\gU$-a.e.\ as required. Note also that
$f_n(x)=O(x)$ on $(0,1)$, so $x^{\gs}f_n(x)$ is bounded for $\gs\ge-1$.
The Mellin transform $\Mf_n(s)$ 
is given by \refL{L99}; it is defined for
$\Re s>-1$.

Fix $n\ge2$ and 
let $s=\gs+\ii\tau$ for some fixed $\gs\in(-1,0)$.
Then \eqref{su2}  implies 
that 
\begin{align}\label{xed1}
|\Mfn(s)|
=\Bigabs{\frac{1}{s}+O\Bigpar{\frac{1}{|s|^2}}}
\sim \frac{1}{|s|}\sim \frac{1}{|\tau|}  
\end{align}
as $\tau\to\pm\infty$.
Furthermore,
\eqref{m1} and \eqref{5.11.2} show that, for $|\tau|\ge2$, say,
\begin{align}\label{xed2}
|\M{\gU}(1-s)|
=\bigabs{\log(1-s)+O(1)}\qw
=\bigpar{\log|\tau|+O(1)}\qw
.\end{align}
Consequently,
$|\Mf_n(s)\M{\gU}(1-s)|\sim1/(|\tau|\log|\tau|)$ as $\tau\to\pm\infty$, 
and thus $\Mf_n(s)\M{\gU}(1-s)$ 
is \emph{not} absolutely integrable on any line $\Re s=\gs$.
Nevertheless, 
 $\Mf_n(s)\M{\gU}(1-s)$ is conditionally integrable on the line $\Re s=\gs$.
This can easily be shown, but we postpone this since it will follow
from the calculations below.
(Conditional integrability is all that we need, but the absence of absolute
integrability is a nuisance that complicates the arguments.)

Assuming this conditional integrability, we have verified
the conditions of Lemma \ref{LP}, 
and consequently \eqref{lp}
holds, which by \eqref{cxED}, \eqref{m1}, and \eqref{su2} yields,
for $-1<\gs<0$,
\begin{align}\label{qk1}
  &\E[D_n]
=\frac{1}{2\pi\ii}\intgs\Mfn(s)\M{\gU}(1-s)\dd s
=\frac{1}{2\pi\ii}\intgs\frac{\Mfn(s)}{\psi(1-s)-\psi(1)}\dd s
\\&\label{qk2}
=\frac{1}{2\pi\ii}\intgs\frac{1}{s(\psi(1-s)-\psi(1))} \dd s
-\frac{1}{2\pi\ii}\intgs\frac{\gG(s)\gG(n)/\gG(n+s)}{\psi(1-s)-\psi(1)}
\dd s,
\end{align}
where the second integral in \eqref{qk2} (but not the first)
is absolutely convergent, by \eqref{hw6b} and \eqref{hw7a}.
To treat the the first integral in \eqref{qk2} we split it into three as
\begin{multline}\label{hw1}
\intgs\frac{1}{s}\Bigpar{\frac{1}{\psi(1-s)-\psi(1)}-\frac{1}{\log(1-s)}}\dd s
\\
+ \intgs\Bigpar{\frac{1}{s}+\frac{1}{1-s}}\frac{1}{\log(1-s)}\dd s
- \intgs\frac{1}{1-s}\frac{1}{\log(1-s)}\dd s.
\end{multline}
The third integral in \eqref{hw1} has the primitive function $-\log\log(1-s)$.
For $s=\gs+\ii\gt$ with $\gs<0$, we have $|1-s|\ge1+\gs>1$ and thus
$\log(1-s)$ lies in the right half-plane.
Furthermore, as $\tau\to\pm\infty$,
$  \log(1-s) = \log|\tau|+O(1)$
and thus $\log\log(1-s)=\log\log|\tau|+o(1)$.
It follows that the third integral in \eqref{hw1} is conditionally
integrable, and that its value is 0.
(Note that it is important here that the definition uses 
the limit of symmetric integrals $\int_{-A}^A$.)

We will show that the first two integrals in \eqref{hw1} converge
absolutely. This means that all terms in \eqref{hw1} are well-defined,
and that we can sum them to obtain the first integral in \eqref{qk2},
which in turn means that this integral and the integrals in \eqref{qk1} are
conditionally convergent, which justifies the use of \refL{LP} above.

For the first integral in \eqref{hw1}, we note first that 
the integrand is analytic
in the domain $\Re s<0$
since $\psi(1-s)-\psi(1)\neq0$ there by \refL{Lpsi},
and that in this domain we have
by \eqref{5.11.2}
$\psi(1-s)=\log(1-s)+O(1)$
and thus, uniformly in any half space $\Re s\le\gd<0$,
\begin{align}
  \label{hw2}
&\frac{1}{\psi(1-s)-\psi(1)}-\frac{1}{\log(1-s)}
=\frac{O(1)}{(\psi(1-s)-\psi(1))\log(1-s)}
\\\notag&\qquad
=O\Bigpar{\frac{1}{|\log(1-s)|^2}}
=O\Bigpar{\frac{1}{\log^2|1-s|}}
=O\Bigpar{\frac{1}{\log^2(2+\tau)}}
.\end{align}
It follows that the integrand in the first integral can be bounded for 
$\Re s\le-\gd$ by
\begin{align}\label{hw3}
\lrabs{\frac{1}{s}\Bigpar{\frac{1}{\psi(1-s)-\psi(1)}-\frac{1}{\log(1-s)}}}
\le
\frac{C(\gd)}{(|\gs|+|\tau|)\log^2(2+\tau)}
.\end{align}
Hence the integral converges absolutely for every $\gs<0$.
Furthermore, \eqref{hw3} implies also that we may shift the line of integration,
and thus the value of the integral is the same for all $\gs<0$;
moreover, \eqref{hw3} and dominated convergence shows that as
$\gs\to-\infty$, the value converges to 0.
Consequently, the first integral in \eqref{hw1} vanishes for every $\gs<0$.

For the second integral in \eqref{hw1} we argue similarly, but simpler.
 In any half space $\Re s\le\gd<0$, we have 
$|\log(1-s)|\ge\log|1-s|\ge\log(1+\gd)$ and thus 
\begin{align}
  \label{hw4}
\Bigpar{\frac{1}{s}+\frac{1}{1-s}}\frac{1}{\log(1-s)}
&=
\frac{1}{s(1-s)}\frac{1}{\log(1-s)}
=O\Bigpar{\frac{1}{|s(1-s)|}}
=O\Bigpar{\frac{1}{|s|^2}}
.\end{align}
It follows again that the integral is absolutely convergent for every
$\gs<0$ and that we may let $\gs\to-\infty$ and conclude that the integral
vanishes. 

This completes the proof that the first integral in \eqref{qk2} is
conditionally convergent, and shows also that it is 0.
Hence, \eqref{qk1}--\eqref{qk2} finally simplifies to
\begin{align}\label{hw5}
  &\E[D_n]
=-\frac{1}{2\pi\ii}\intgs\frac{\gG(s)\gG(n)/\gG(n+s)}{\psi(1-s)-\psi(1)}
\dd s,
\end{align}
valid for any $n\ge2$ and $\gs\in(-1,0)$.

Denote the integrand in \eqref{hw5} by $g(s)$; then
$g(s)$  is analytic in \set{\Re s>-1} except for poles
$\set{1-s_i: i\ge0}$ (including $1-s_0=0$).
For any $n\ge2$ and 
$\gs\notin\set{1-s_i:i\ge0}$ with $\gs>-1$,
\eqref{hw6b} and \eqref{hw7a} show that the 
integral in \eqref{hw5} converges absolutely.
Moreover, if we assume $|\gt|\ge1$, then \eqref{hw6b} and \refL{Lhw7}\ref{Lhw7B}
show that the integrand in \eqref{hw5} is $O\bigpar{|s|\qww}$ uniformly for
$\gs$ in any compact subset of $(-1,\infty)$,
and it follows that we may move the line of integration from a
$\gs\in(-1,0)$ to a (large) positive $\gs$,
picking
up $-2\pi\ii$ times the residues at the passed poles.

So far we have argued with a fixed $n$; now we let $n\ge2$ be arbitrary.
Then, for a fixed $\gs=\Re s>0$ not in the set \set{1-s_i:i\ge0} of poles, 
\eqref{hw6c} and \eqref{hw7a} imply
that the  integral in \eqref{hw5} is $O\bigpar{n^{-\gs}}$.
Consequently, \eqref{hw5} implies that $\E[D_n]$ has an asymptotic
expansion consisting of the residues of $g(s)$ at its poles:
\begin{align}\label{qk4}
  \E[D_n]
=
\sum_{i=0}^N \Res_{s=1-s_i} \frac{\gG(s)\gG(n)/\gG(n+s)}{\psi(1-s)-\psi(1)}
+ O\bigpar{n^{-\gs}}
\end{align}
for any fixed $N\ge0$ and  $\gs<1-s_{N+1}$.
The first pole $0=1-s_0$ has order $2$ 
(since also $\gG(s)$ has a pole there)
and a straightforward calculation, using \eqref{no2}--\eqref{no4} and 
$\psi''(1)=-2\zeta(3)$ (which follows from \eqref{psi'} or \cite[5.7.4]{NIST})
yields
\begin{align}
\Res_{s=0}\frac{\gG(s)\gG(n)/\gG(n+s)}{\psi(1-s)-\psi(1)}
  =\frac{\psi(n)-\psi(1)}{\psi'(1)}-\frac{\psi''(1)}{2\psi'(1)^2}
=\frac{h_{n-1}}{\zeta(2)}+\frac{\zeta(3)}{\zeta(2)^2}.
\end{align}

For any other pole $1-s_i=1+|s_i|$ ($i\ge1$), $g(s)$ has a pole of order
$1$, with residue
\begin{align}\label{qk5}
\Res_{s=1-s_i}\frac{\gG(s)\gG(n)/\gG(n+s)}{\psi(1-s)-\psi(1)}
=-\frac{\gG(|s_i|+1)\gG(n)/\gG(n+|s_i|+1)}{\psi'(s_i)}
.\end{align}
Hence, \eqref{qk4} yields \eqref{aw1}.
Finally, \eqref{aw2} follows from \eqref{aw1} as before,
which completes the proof of \refT{TD}.
\end{proof}

\subsection{The hop-heights $L_n$: method 2}\label{SEL2}
We find it instructive to also use \refL{LP} to give another proof of
\refT{TL}. 
\begin{proof}[Second proof of \refT{TL}]
By \eqref{aud4}, $H_n(x)$ is a polynomial on $\oi$ with $H_n(0)=0$, and
the Mellin transform $\MHn(s)$ exists for $\Re s>-1$;
also, $x^\gs H_n(x)$ is bounded for $\gs>-1$.
Hence, \refL{LP} applies to $H_n(x)$ and $\gU$ and any $\gs\in(-1,0)$,
provided the second integral in \eqref{lp} exists at least conditionally.

We thus obtain, by \eqref{aud3}, \refL{LP}, \eqref{so1}, and
\eqref{m1}, for $-1<\gs<0$, 
\begin{align}\label{so2}
&  \E[L_n]=
\frac{1}{2\pi\ii}\intgs \MHn(s)\M{\gU}(1-s)\dd s
=\frac{1}{2\pi\ii}\intgs \frac{h_{n-1}-\Mfn(s)}{s\xpar{\psi(1-s)-\psi(1)}}\dd s
\\&\label{so2b}
=\frac{h_{n-1}}{2\pi\ii}\intgs \frac{1}{s\xpar{\psi(1-s)-\psi(1)}}\dd s
-\frac{1}{2\pi\ii}\intgs \frac{\Mfn(s)}{s\xpar{\psi(1-s)-\psi(1)}}\dd s
\end{align}
provided the last two integrals exist at least conditionally.
In fact, the first integral in \eqref{so2b} is the same as in \eqref{qk2}, 
which we have shown converges and equals 0.
Furthermore, \eqref{xed1} and \eqref{hw7a} show that for any fixed
$\gs\in(-1,0)$,
the integrand in the second integral in \eqref{so2b} is $O\bigpar{|s|\qww}$
and thus this integral is absolutely convergent.
Consequently, \eqref{so2}--\eqref{so2b} are justified, and simplify to,
using \eqref{su2}, and with absolutely convergent integrals,
\begin{align}\label{so3}
  \E[L_n]&=
-\frac{1}{2\pi\ii}\intgs \frac{\Mfn(s)}{s\xpar{\psi(1-s)-\psi(1)}}\dd s
\\\notag&
=-\frac{1}{2\pi\ii}\intgs \frac{1}{s^2\xpar{\psi(1-s)-\psi(1)}}\dd s
+\frac{1}{2\pi\ii}\intgs 
 \frac{\gG(s)\gG(n)/\gG(n+s)}{s\xpar{\psi(1-s)-\psi(1)}}\dd s
.\end{align}
In the first integral on the \rhs, we may again shift $\gs\to-\infty$;
the integrand is analytic for $\Re s<0$, 
and $O\bigpar{|s|\qww}$ is any halfplane $\Re s\le\gd<0$,
and it follows (by dominated convergence) that the limit, 
and thus the integral, is $0$.
Consequently, \eqref{so3} simplifies to
\begin{align}\label{so4}
  \E[L_n]&
=
\frac{1}{2\pi\ii}\intgs 
 \frac{\gG(s)\gG(n)/\gG(n+s)}{s\xpar{\psi(1-s)-\psi(1)}}\dd s
,\qquad -1<\gs<0
.\end{align}
In the half-plane $\Re s>-1$, the integrand in \eqref{so4} has poles at 
0 and $1-s_i=1+|s_i|$ for $i\ge1$; there is a triple pole at 0, and all
other poles are simple. As in \refS{SED2}, we may shift $\gs$ to a large
positive value; in fact, the integrand in \eqref{so4} is $g(s)/s$,
with $g(s)$ as in \refSS{SED2},
so we may just reuse the estimates in \refS{SED2}. 
This yields
\begin{align}\label{ql4}
  \E[L_n]
=
-\sum_{i=0}^N \Res_{s=1-s_i} \frac{\gG(s)\gG(n)/\gG(n+s)}{s(\psi(1-s)-\psi(1))}
+ O\bigpar{n^{-\gs}}
\end{align}
for any fixed $N\ge0$ and  $\gs<1-s_{N+1}$.
  The residue at 0 is by straightforward (but tedious) calculation 
$-1$ times
\begin{align}\label{hw22}
\frac{3}{\pi^2}h_{n-1}^2+\frac{\zeta(3)}{\zeta(2)^2}h_{n-1}
+\frac{\zeta(3)^2}{\zeta(2)^3}+\frac{1}{10}-\frac{3}{\pi^2}\psi'(n)
\end{align}
and the residues at the simple poles are immediate.
Hence, \eqref{ql4} yields \eqref{tl1}.
\end{proof}

Note that in the proofs of \refTs{TD} and \ref{TL}
just given, the significant differences between 
$\E [D_n]$ and $\E[ L_n]$ is that in the proof of \refT{TD} we have a pole at 0
of order 2, while the corresponding function in the proof of \refT{TL} has a
pole of order 3. This explains the different powers of $\log n$ in the
leading term; it also explains why the calculations for $\E[ L_n]$ in the
first proof in \refS{SEL1} are somewhat more complicated that for $\E[ D_n]$ in
\refS{SED1}.

\subsection{The length $\gL_n$: method 2}\label{SgL2}
\refL{LP} yields also an alternative proof of 
\refT{TgL}.  

\begin{proof}[Second proof of \refT{TgL}]
This is similar to the proofs above, so we omit some details.
Recalling \eqref{qm12}--\eqref{qm3},
we may apply \refL{LP} to $\gl_n$ and $\gU$, 
again for any $n\ge2$ and $-1<\gs<0$.
As in \refS{SED2}, this leads to 
(after discarding one conditionally convergent integral that is 0),
for $-1<\gs<0$,
\begin{align}\label{qp2}
  \E\gL_n&
=-\frac{1}{2\pi\ii}
\intgs \frac{\gG(s+1)\gG(n+1)/\gG(n+s)}{(s-1)(\psi(1-s)-\psi(1)) }\dd s.
\end{align}
We again shift the
line of integration to a large $\gs$,
noting that in the half-plane $\Re s>-1$, the integrand has 
poles at $\set{1-s_i:i\ge0}$, while
$s=1$ is a removable singularity and not a pole since $\psi(1-s)$
has a pole there.
Hence we obtain, similarly to \eqref{qk4}, 
\begin{align}\label{qp4}
  \E[\gL_n]
=
\sum_{i=0}^N \Res_{s=1-s_i} 
\frac{\gG(s+1)\gG(n+1)/\gG(n+s)}{(s-1)(\psi(1-s)-\psi(1))}
+ O\bigpar{n^{1-\gs}}
\end{align}
for any fixed $N\ge0$ and  $\gs<1-s_{N+1}$.
All poles are simple, so it is straightforward to compute the residues.
In particular, the residue at 0 is $n/\psi'(1)=n/\zeta(2)$, and we 
obtain \eqref{tgl}. 
\end{proof}

\section{Higher moments of $D_n$}\label{SEDk}

The results on $\E[D_n]$ above may be extended to higher moments.
For any integer $k\ge1$, we have by \eqref{cx3}
\begin{align}\label{az1}
  \E[D_n^k]&
= k\intoo t^{k-1} \Pr(D_n>t)
= k\intoo t^{k-1} \E\bigsqpar{1-(1-P_{t,1})^{n-1}}\dd t.
\end{align}
Thus, if we define an infinite measure on $\oio$ by
\begin{align}\label{az2}
  \gU_k:=k\intoo t^{k-1}\cL(P_{t,1}) \dd t
\end{align}
(so $\gU_1=\gU$ defined in \eqref{e7}), then
\begin{align}\label{az3}
  \E[D_n^k]&
=\intoi\bigsqpar{1-(1-x)^{n-1}}\dd \gU_k(x).
\end{align}

As in the special case $k=1$ in \eqref{m1}, we obtain the Mellin transform
(i.e., moments) of $\gU_k$ from \eqref{b5}:
\begin{align}\label{az4}
\intoi x^{s-1}\dd\gU_k(x)&=
k\intoi t^{k-1}\intoo x^{s-1}\dd\cL(P_{t,1})(s)\dd t
= k\intoi t^{k-1}\E [P_{t,1}^{s-1}]\dd t
\\\notag&
= k\intoi t^{k-1}e^{-t(\psi(s)-\psi(1))}\dd t
\\\notag&
=k!\, \bigpar{\psi(s)-\psi(1)}^{-k},
\qquad \Re s>1.
\end{align}

Again, the Mellin transform extends to a meromorphic function in the complex
plane, with poles at $1 > s_1 > s_2 > \dots$ given by \refL{Lpsi}.

\subsection{Asymptotics of $\E [D_n^k]$}\label{SSEDk}
Fix $k\ge2$.
We apply \refL{LP} to $\gU_k$ and the function 
$f_n(x)$ in \eqref{fn} as for the case $k=1$ in \refS{SED2}.
If $s=\gs+\ii\tau$ with a fixed 
$\gs\in(-1,0)$, then \eqref{az4} and \eqref{xed2} show that
\begin{align}\label{xed2k}
|\MgU_k(1-s)|
= k!\, |\MgU(1-s)|^k
=\bigpar{\log|\tau|+O(1)}^{-k},
.\end{align}
which combined with \eqref{xed1} shows that 
$\Mf_n(s)\MgU_k(1-s)$ is absolutely integrable on the line $\Re s=\gs$.
(This simplifies the argument  needed for $k=1$ in \refS{SED2}.)
The other conditions  of \refL{LP} are satisfied as in \refSS{SED2}, 
in particular, $f_n$ is continuous  $\gU_k$-a.e.\ since $\gU_k\set1=0$.
 Consequently \eqref{lp}
holds, which by \eqref{az3}, \eqref{fn}, and  \eqref{az4} yields 
\begin{align}\label{kb1}
  \E[D_n^k]&
=\frac{1}{2\pi\ii}\intgs\Mfn(s)\M{\gU}_k(1-s)\dd s
\\\notag&
=\frac{k!}{2\pi\ii}\intgs\frac{\Mfn(s)}{(\psi(1-s)-\psi(1))^k}\dd s,
\qquad -1<\gs<0.
\end{align}
We split $\Mfn(s)$ into two parts according to \eqref{su2}; thus
\eqref{kb1} yields
\begin{align}\label{kb2}
  \E[D_n^k]=&
\frac{k!}{2\pi\ii}\intgs\frac{1}{s(\psi(1-s)-\psi(1))^k}
\dd s
\\\notag&\qquad{}
-\frac{k!}{2\pi\ii}\intgs\frac{\gG(s)\gG(n)/\gG(n+s)}{(\psi(1-s)-\psi(1))^k}
\dd s,
\qquad -1<\gs<0.
\end{align}
In the first integral in \eqref{kb2}, the integrand 
is
analytic in the half-plane \set{\Re s<0}. Furthermore, in this half-plane 
$\psi(1-s)=\log(1-s)+O(1)=\log(|s|+1)+O(1)$, see again \eqref{5.11.2}.
It follows first that we may move the line of integration to any $\gs<0$,
and then that as $\gs\to-\infty$, the integral tends to 0 by dominated
convergence. 
(Again, the case $k\ge2$ is simpler that $k=1$ here, although the conclusion
is the same.)
Consequently, the first integral in \eqref{kb2} is 0, and thus
we have, as for the case $k=1$ in \eqref{hw5},
\begin{align}\label{piglet}
  \E[D_n^k]=&
-\frac{k!}{2\pi\ii}\intgs\frac{\gG(s)\gG(n)/\gG(n+s)}{(\psi(1-s)-\psi(1))^k}
\dd s,
\end{align}
for any $n\ge2$ and $\gs\in(-1,0)$.

We then argue as in \refSS{SED2}, and
move the line of integration in \eqref{piglet} to a (large) positive $\gs$;
this is again justified by the estimates in \refLs{Lhw6} and
\ref{Lhw7}. 
The integrand, $g_k(s)$ say, is again analytic in \set{\Re s>-1} except for
poles at 0 and $\set{1-s_i=|s_i|+1: i\ge1}$,
and we obtain again
an asymptotic
expansion consisting of the residues of $g_k(s)$ at its poles:
\begin{align}\label{kb4}
  \E[D_n^k]
=
k!\sum_{i=0}^N \Res_{s=1-s_i} 
\frac{\gG(s)\gG(n)/\gG(n+s)}{(\psi(1-s)-\psi(1))^k}
+ O\bigpar{n^{-\gs}}
\end{align}
for any fixed $N\ge0$ and  $\gs<1-s_{N+1}$.
The first pole $0=1-s_0$ has order $k+1$ 
so the residue there can be
obtained by computing the coefficients of the 
singular part of the Laurent expansion of
$\gG(s)/\bigpar{(\psi(1-s)-\psi(1))^k}$  
and the $k+1$ first Taylor coefficients of $\gG(n)/\gG(n+s)$ at $s=0$;
it follows from \eqref{no2} that
the latter coefficients are by polynomials in $\psi(n)$ and the derivatives
$\psi'(n),\dots,\psi^{(k-1)}(n)$; these may 
by \eqref{5.11.2}
all be expanded further into
asymptotic expansions involving $\log n$ (for $\psi(n)$ only) and powers of
$n\qw$.
The leading term will be a constant times $\log^kn$, in fact, it is
easily seen to be 
(including the factor $k!$ in \eqref{kb4})
\begin{align}\label{kc1}
\psi'(1)^{-k}\log^k n=\bigpar{\tfrac{6}{\pi^2}\log n}^k.  
\end{align}
For any other pole $1-s_i=1+|s_i|$ ($i\ge1$), $g_k(s)$ has a pole of order
$k$,
and the contribution from this pole to \eqref{kb4} is given by some
combination (with computable coefficients) of the $k$ first Taylor
coefficients of $\gG(n)/\gG(n+s)$ at $s=1+|s_i|$.
The dominant term will be 
a (nonzero) constant times 
$\psi(n)^{k-1}\gG(n)/\gG(n+1+|s_i|)\sim(\log n)^{k-1}n^{-|s_i|-1}$,
so the entire residue is of this order.
It follows that the expansion \eqref{kb4} is an asymptotic expansion of the
type defined in \eqref{asy1}--\eqref{asy2} with terms of successively
smaller order; hence we may write \eqref{kb4} as
\begin{align}\label{kb5}
  \E[D_n^k]
\sim
k!\sum_{i=0}^\infty \Res_{s=1-s_i} 
\frac{\gG(s)\gG(n)/\gG(n+s)}{(\psi(1-s)-\psi(1))^k}
,\end{align}
where the residues may calculated and expanded as above, leading to an
asymptotic expansion containing terms that are constants times
$(\log n)^k$ (the leading term),
$(\log n)^j n^{-\ell}$ ($0\le j\le k-1$, $\ell\ge0$),
and $(\log n)^j n^{-|s_i|-\ell}$ ($i\ge1$, $0\le j\le k-1$, $\ell\ge1$);
the coefficients may be computed as sketched above.
In particular, we obtain by collecting terms:
\begin{theorem}\label{TDk}
For any fixed integer $k\ge1$, 
we have as \ntoo{} the asymptotic expansion \eqref{kb5}, and in particular
  \begin{align}\label{kb6}
  \E[D_n^k] = p_k(\log n)+ O\Bigpar{\frac{\log^{k-1}n}{n}}
\end{align}
for some (computable) polynomial $p_k$ of degree $k$, with leading term
\eqref{kc1}.
\end{theorem}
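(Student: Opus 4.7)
The plan is to start from the contour-integral representation \eqref{piglet},
$$
  \E[D_n^k] = -\frac{k!}{2\pi\ii}\intgs\frac{\gG(s)\gG(n)/\gG(n+s)}{(\psi(1-s)-\psi(1))^k}\dd s,
  \qquad -1<\gs<0,
$$
which has already been derived by applying \refL{LP} to $\gU_k$ and $f_n$, together with the observation that the $\Mf_n(s)=1/s$ piece contributes zero after shifting $\gs\to-\infty$. Write $g_k(s)$ for the integrand. The first task is to justify shifting the contour in \eqref{piglet} rightwards, across the poles of $g_k$ in the strip $-1<\Re s<\infty$, to a large positive value of $\gs$. By \refL{Lpsi} the poles of $g_k$ lie exactly at $s=1-s_i$ for $i\ge0$: at $s_0=1$, i.e.\ $s=0$, one has a pole of order $k+1$ (since $\gG(s)$ contributes a simple pole and $(\psi(1-s)-\psi(1))^k$ a zero of order $k$), and at each $1-s_i=|s_i|+1$ for $i\ge1$ a pole of order $k$. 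The horizontal decay $g_k(s)=O\bigpar{|s|^{-2}(\log|\Im s|)^{-k}}$ needed to close the contour follows at once from \refL{Lhw6}\ref{Lhw7B} and \refL{Lhw7}\ref{Lhw7B}, as in \refSS{SED2}; using \refL{Lhw6}\ref{Lhw7A} the same estimates give, uniformly in $n$, an $O\bigpar{n^{-\gs}}$ bound for the integral on the shifted line $\Re s=\gs\notin\set{1-s_i:i\ge0}$. This yields
\begin{align*}
\E[D_n^k] = k!\sum_{i=0}^N\Res_{s=1-s_i} g_k(s) + O\bigpar{n^{-\gs}},
\qquad N\ge0,\ \gs<1-s_{N+1},
\end{align*}
which is \eqref{kb4}; the ``$\sim$'' form \eqref{kb5} follows once we verify that the residues are of strictly decreasing order.

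The next step is to compute and estimate the individual residues. For $i\ge1$ the pole at $s=1+|s_i|$ has order $k$, and the residue is, up to a nonzero constant determined by the Taylor coefficients of $(\psi(1-s)-\psi(1))^{-k}$ at $s=1-s_i$, a linear combination of
$$
\frac{\ddx^j}{\ddx s^j}\frac{\gG(s)\gG(n)}{\gG(n+s)}\bigg|_{s=1+|s_i|},\qquad 0\le j\le k-1.
$$
Using \eqref{no2} and $\ddx[\gG(n)/\gG(n+s)]/\ddx s=-\psi(n+s)\gG(n)/\gG(n+s)$ together with the asymptotic expansion \eqref{5.11.2} for $\psi$ and its derivatives, each such derivative expands as $n^{-|s_i|-1}$ times a polynomial of degree at most $k-1$ in $\log n$ (with further corrections in negative powers of $n$). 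The dominant contribution is therefore of order $(\log n)^{k-1}n^{-|s_i|-1}$, which is $o(1)$ and strictly smaller than the contribution from $s=0$, confirming the ordering in \eqref{kb5}. For the $i=0$ residue, we use the standard Laurent expansions
$$
\gG(s)=\frac{1}{s}-\gamma+O(s),\qquad (\psi(1-s)-\psi(1))^{-k}=\zeta(2)^{-k}s^{-k}\bigpar{1+O(s)},
$$
and the Taylor expansion of $\gG(n)/\gG(n+s)=\exp(-\int_0^s\psi(n+u)\,\ddx u)$ at $s=0$, whose coefficients are polynomials in $\psi(n),\psi'(n),\dots,\psi^{(k-1)}(n)$; combining these via the residue formula at a pole of order $k+1$ extracts the coefficient of $s^k$ in a product, which by \eqref{5.11.2} yields a polynomial $p_k$ in $\log n$ of degree exactly $k$ with leading coefficient $\psi'(1)^{-k}=(6/\pi^2)^k$, as in \eqref{kc1}, plus lower-order corrections of the form $(\log n)^jn^{-\ell}$ for $0\le j\le k-1$ and $\ell\ge1$.

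Finally, to derive \eqref{kb6} from \eqref{kb5} take $N=1$ and $\gs\in\xpar{1+|s_1|,2}$; the residue at $s=0$ supplies $p_k(\log n)$ up to an error $O\bigpar{(\log n)^{k-1}/n}$ coming from the non-constant Taylor terms, while the residue at $s=1+|s_1|$ is of order $(\log n)^{k-1}n^{-1-|s_1|}=o\bigpar{(\log n)^{k-1}/n}$, and the contour-shift error is $O\bigpar{n^{-\gs}}=O\bigpar{n^{-1-|s_1|}}$. Collecting the three bounds gives \eqref{kb6}.

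The main technical obstacle is organizing the Laurent expansion at the order-$(k+1)$ pole at $s=0$ cleanly enough to see both that $p_k$ has degree $k$ and that its leading coefficient is $(6/\pi^2)^k$; once one introduces the bookkeeping $\gG(n)/\gG(n+s)=\sum_{j\ge0}\alpha_j(n)s^j$ with $\alpha_j(n)$ a polynomial in $\psi(n),\dots,\psi^{(j-1)}(n)$, extracting the relevant coefficient of $s^k$ is a routine (if tedious) residue computation. The remaining bounds are entirely parallel to those in \refSS{SED2}.
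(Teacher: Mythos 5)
Your proposal follows essentially the same route as the paper's proof: you start from \eqref{piglet}, shift the contour rightward using the decay estimates of Lemmas~\ref{Lhw6} and~\ref{Lhw7} to obtain the residue sum \eqref{kb4}, then expand the order-$(k+1)$ residue at $s=0$ and the order-$k$ residues at $s=1+|s_i|$ to produce \eqref{kb5}, \eqref{kc1}, and \eqref{kb6}. The one cosmetic slip is in the cross-references (you cite item labels belonging to Lemma~\ref{Lhw7} as though they were inside Lemma~\ref{Lhw6}); the intended estimates, \eqref{hw6b}/\eqref{hw6c} and \eqref{hw7a}, are the correct ones and are exactly what the paper uses. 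Otherwise the argument is the paper's argument.
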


For example,
  for $k=2$, the residue at 0 in \eqref{kb5} is, including the factor $2!$
and recalling \eqref{no4}
\begin{align}\label{kb7}
&  \frac{h_{n-1}^2}{\zeta(2)^2}+\frac{4\zeta(3)}{\zeta(2)^3}h_{n-1}
+\frac{6\zeta(3)^2}{\zeta(2)^4}-\frac{18}{5\pi^2}-\frac{1}{\zeta(2)^2}\psi'(n)
.\end{align}
Hence, \eqref{kb5} yields, using \eqref{au79},
\begin{align}\label{kb8}
\E[D_n^2]
&
=\frac{1}{\zeta(2)^2}h_{n-1}^2
+ \frac{4\zeta(3)}{\zeta(2)^3}h_{n-1}
+\frac{6\zeta(3)^2}{\zeta(2)^4}
-\frac{18}{5\pi^2}
+ O\Bigpar{\frac{1}{n}}
\\\notag&
=\frac{1}{\zeta(2)^2}\log^2n
+\Bigpar{\frac{2\gamma}{\zeta(2)^2} 
+\frac{4\zeta(3)}{\zeta(2)^3}}\log n
\\\notag&\qquad
+\frac{\gamma^2}{\zeta(2)^2}
+ \frac{4\zeta(3)}{\zeta(2)^3}\gamma
+\frac{6\zeta(3)^2}{\zeta(2)^4}
-\frac{18}{5\pi^2}
+ O\Bigpar{\frac{\log n}{n}}
.\end{align}

Combining \eqref{kb8} and \eqref{aw1}, we obtain the following result;
the leading term was found by a different method  in \cite[Theorem 1.1]{beta1}.
\begin{theorem}\label{TvarD}
The variance of $D_n$ is, as \ntoo,
  \begin{align}\label{kb9}
\var[D_n]
&
=
 \frac{2\zeta(3)}{\zeta(2)^3}h_{n-1}
+\frac{5\zeta(3)^2}{\zeta(2)^4}
-\frac{18}{5\pi^2}
+ O\Bigpar{\frac{\log n}{n}}
\\\notag&
=
\frac{2\zeta(3)}{\zeta(2)^3}\log n
+ \frac{2\zeta(3)}{\zeta(2)^3}\gamma
+\frac{5\zeta(3)^2}{\zeta(2)^4}
-\frac{18}{5\pi^2}
+ O\Bigpar{\frac{\log n}{n}}
.\end{align}
\end{theorem}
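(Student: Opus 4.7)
The plan is to combine the identity $\var[D_n]=\E[D_n^2]-(\E[D_n])^2$ with the asymptotic expansions already produced for the first and second moments of $D_n$, namely \eqref{aw1} of \refT{TD} and the $k=2$ case of \refT{TDk} (i.e., \eqref{kb8}). In other words, once $\E[D_n^2]$ has been obtained from Mellin inversion of $\gU_2$ and $\E[D_n]$ from Mellin inversion of $\gU$, the proof of \refT{TvarD} is pure bookkeeping: the $h_{n-1}^2$ contributions to the two terms must cancel identically, reflecting the fact that $\var[D_n]$ is of lower order than either moment separately.

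Concretely, I would truncate \eqref{aw1} to
$$\E[D_n]=\frac{1}{\zeta(2)}h_{n-1}+\frac{\zeta(3)}{\zeta(2)^2}+O\bigpar{n^{-1-|s_1|}},$$
square it, and use $|s_1|>0$ together with $h_{n-1}=O(\log n)$ to absorb the cross terms, obtaining
$$(\E[D_n])^2=\frac{1}{\zeta(2)^2}h_{n-1}^2+\frac{2\zeta(3)}{\zeta(2)^3}h_{n-1}+\frac{\zeta(3)^2}{\zeta(2)^4}+O\Bigpar{\frac{1}{n}}.$$
Subtracting this from the expansion \eqref{kb8} of $\E[D_n^2]$, the $h_{n-1}^2$ contributions cancel, the coefficient of $h_{n-1}$ becomes $(4-2)\zeta(3)/\zeta(2)^3=2\zeta(3)/\zeta(2)^3$, and the constant terms combine as $6\zeta(3)^2/\zeta(2)^4-\zeta(3)^2/\zeta(2)^4-18/(5\pi^2)=5\zeta(3)^2/\zeta(2)^4-18/(5\pi^2)$, yielding the first line of \eqref{kb9}.

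For the second line I invoke the expansion \eqref{au79}, $h_{n-1}=\log n+\gam-\tfrac1{2n}+O(n^{-2})$; the linear term gives the contribution $\tfrac{2\zeta(3)}{\zeta(2)^3}\gam$ to the constant, while the $-1/(2n)$ part generates an $O(1/n)$ correction that is dominated by the $O(\log n/n)$ error term already present. There is essentially no substantive obstacle: the entire complex-analytic work, including the residue calculations and the shifting of contours, is already carried out in the derivation of \refT{TDk} (especially \eqref{kb7}--\eqref{kb8}), and the only point requiring attention is verifying that the two $h_{n-1}^2$ coefficients agree exactly, which they must, since otherwise $\var[D_n]$ would grow like $\log^2 n$ rather than $\log n$.
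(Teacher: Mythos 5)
Your proposal is correct and is exactly the argument the paper has in mind: it states only ``Combining \eqref{kb8} and \eqref{aw1}, we obtain the following result,'' and your computation fills in that combination precisely, checking that the $h_{n-1}^2$ terms cancel, that $(4-2)\zeta(3)/\zeta(2)^3=2\zeta(3)/\zeta(2)^3$, and that $6\zeta(3)^2/\zeta(2)^4-\zeta(3)^2/\zeta(2)^4=5\zeta(3)^2/\zeta(2)^4$. As a small bonus you observe that the subtraction actually yields an error of $O(1/n)$ in the $h_{n-1}$ form, slightly sharper than the $O(\log n/n)$ stated in \eqref{kb9}, though of course $O(1/n)\subseteq O(\log n/n)$ so the paper's statement remains correct.
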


\subsection{Asymptotics of $\gU_k$}\label{SSgUk}
The argument above did not use any properties of $\gU_k$ except its Mellin
transform and $\gU_k\set1=0$. 
For completeness, we also consider results analoguous to \refL{LM}.

We may invert the Mellin transform by the method in \refS{SgU}
and show that $\gU_k$ is absolutely continuous
and obtain an asymptotic expansion of its density $\nu_k(x)$ as $x\downto0$.
However, now each pole has order $k$, which makes the Mellin inversion a bit
more complicated. 

Consider for simplicity first the case $k=2$. Then \eqref{az4} shows that
$\gU_2$ has the Mellin transform $2\xpar{\psi(s)-\psi(1)}^{-2}$,
which has  double poles at $1$ and every $s_i$. 
We argue as in \refS{SgU} (to which we refer for omitted details)
and consider the finite measure $\nu$ on
$(0,1)$
defined by $\ddx\nu(x)=x\dd\gU_2(x)$; this shifts the Mellin transform
to 
\begin{align}\label{ka1}
\widetilde{\nu}(s)=\widetilde{\gU_2}(s+1)=\frac{2}{(\psi(s+1)-\psi(1))^2},
\end{align}
which has a double pole at 0 with singular part 
\begin{align}\label{ka2}
\frac{2}{\zeta(2)^{2}}s^{-2}+\frac{4\zeta(3)}{\zeta(2)^{3}} s^{-1}.
\end{align}
We therefore define 
$\nu_0$ as the measure on $\oio$ with density
\begin{align}\label{ka3}
h_0(x):= \frac{2}{\zeta(2)^2}(-\log x) + \frac{4\zeta(3)}{\zeta(2)^3}
.\end{align}
Then $\nu_0$ has Mellin transform exactly  \eqref{ka2} (for $\Re s>0)$, 
and thus the signed measure
$\nux:=\nu-\nu_0$ has a Mellin transform which extends to a meromorphic
function in $\bbC$ with  poles only at
$\set{s_i-1:i\ge1}$ (all double).
Again, the Mellin transform is not integrable on any vertical lines, 
so we use again 
the trick of considering $\tnux'(s)$, which is 
the Mellin transform of $\log(x)\dd\nux(x)$, and which is
integrable on any vertical line not containing a pole $s_i$. 
Thus the Mellin inversion formula \eqref{m8} applies.
By, as in \refS{SgU},  shifting the line of integration and undoing the
modifications $\gU_2\to\nu\to\nux\to(\log x)\dd\nux$ 
used in the argument,
it follows that $\gU_2$ is absolutely continuous with a
density 
\begin{align}\label{ka4}
\gu_2(x)&
:=\frac{\ddx\gU_2}{\ddx x}
=\frac{1}{x}h_0(x)+\frac{1}{x\log x}O\bigpar{x^{-s_1+1-\eps}}
\\\notag&\phantom:
= \frac{2}{\zeta(2)^2}\cdot\frac{-\log x}{x} 
+ \frac{4\zeta(3)}{\zeta(2)^3}\cdot\frac{1}{x}
+O\bigpar{x^{|s_1|-\eps}|\log x|\qw},
\qquad 0<x<1,
\end{align}
for any $\eps>0$.
We may improve this to a full asymptotic expansion as in 
\eqref{lm++}
by caculating contributions from further poles;
note that we now get for each pole $s_i$ two terms,  with
$x^{-s_i}$ and $(\log x)x^{-s_i}$.

We now may use this and \eqref{az3} to obtain the asymptotical expansion
\eqref{kb8} for $\E[D_n^2]$, but the calculations are  more
complicated than in \refS{SED1} since we now have more terms, and we leave
the details to the interested reader.

From the main term in \eqref{ka4} we obtain, 
using both \eqref{su1} and the result of differentiating it,
\begin{align}\label{ka5}
&  \intoi\bigsqpar{1-(1-x)^{n-1}}
\Bigpar{\frac{2}{\zeta(2)^2}\cdot\frac{-\log x}{x} 
+ \frac{4\zeta(3)}{\zeta(2)^3}\cdot\frac{1}{x}}
\dd x
\\\notag&\qquad
=-\frac{2}{\zeta(2)^2}\Mfn'(0)
+ \frac{4\zeta(3)}{\zeta(2)^3}\Mfn(0).
\end{align}
For the remainder term in \eqref{ka4} we have essentially
the same estimate as in \eqref{au7} (with a different constant).

We have computed $\Mfn(0)=h_{n-1}$ in \eqref{su4},
and $-\Mfn'(0)$ in \eqref{sv4}.
Since $\psi'(n)=O(1/n)$, as a consequence of \eqref{5.11.2},
we find from \eqref{az3}
and \eqref{ka5}
\begin{align}\label{ka7}
\E[D_n^2]
&
=\frac{1}{\zeta(2)^2}h_{n-1}^2
+ \frac{4\zeta(3)}{\zeta(2)^3}h_{n-1}
+ O(1),
\end{align}
as already shown (with more precision) in \eqref{kb8}.
We may also obtain further terms, and a full asymptotic expansion,
but the method of \refSS{SSEDk} seems preferable.

We may treat moments of any order $k\ge2$ in the same way,
in principle with full asymptotic expansions. 
In general, the Mellin transform \eqref{az4} has
poles of order $k$, and the main term 
(for our purposes at least, i.e., for small $x$)
of the density $\gu_k(x)$ of $\gU_k$ will be of the form
$q_k(\log x)/x$, where $q_k$ is a polynomial of degree $k-1$;
the leading term of $\gu_k(x)$ is, more precisely,
$k\zeta(2)^{-k}(-\log x)^{k-1}/x$.
Substituting this in \eqref{az3} and
using \eqref{su1}--\eqref{su2} as above
yields, 
unsurprisingly, the main term
\begin{align}\label{ka9}
\E[D_n^k]\sim 
k\zeta(2)^{-k}\Bigpar{-\frac{\ddx}{\ddx s}}^{k-1}\Mfn(s)\Bigr|_{s=0}
\sim \Bigpar{\frac{h_{n-1}}{\zeta(2)}}^k
\sim \Bigpar{\frac{6}{\pi^2}\log n}^k.
\end{align}
Further terms can be obtained in a
straightforward way, but again the method in \refSS{SSEDk} seems preferable.

\section{Moment generating function of $D_n$}\label{Smgf}
Finally, we consider the moment generating function $\E[e^{zD_n}]$ of
$D_n$. Note first that conditioned on $\DTCS(n)$, i.e., 
on the structure of the tree but forgetting the edge lengths, 
the height $D_n$ is a sum of a finite number of exponential random
variables, each with expectation $1/h_{m-1}\le1$ for some $m\ge2$;
it follows that the moment generating function $\E[e^{zD_n}]$ 
exists for $\Re z<1$,
and thus is an analytic function there.

\begin{remark}\label{Rmgf1}
The same argument shows that $\E [e^{D_n}]=\infty$ for every $n\ge2$, 
since with positive probability leaf 1 belongs to a clade of size $m=2$; 
hence the moment generating
function exists if and only if $\Re z<1$.  
\end{remark}

We will derive our results using the (exact) formula \eqref{piglet}
for  moments. 
We will need a bound for the denominator there,
more precise than \refL{Lhw7}.

\begin{lemma}\label{LZ}
  Let $s=\gs+\ii\gt$ with $\gs>1$. Then
  \begin{align}
    \label{lz}
|\psi(s)-\psi(1)|\ge\Re\bigpar{\psi(s)-\psi(1)}\ge\psi(\gs)-\psi(1)>0.
  \end{align}
\end{lemma}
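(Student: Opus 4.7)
The plan is to use the Mittag-Leffler series expansion of $\psi$ that was already exploited in Lemma~\ref{Lpsi}, namely
\begin{align*}
\psi(s) = -\gamma + \sum_{k=0}^{\infty}\Bigpar{\frac{1}{k+1} - \frac{1}{k+s}},
\end{align*}
valid for $s\notin\set{0,-1,-2,\dots}$. The first inequality in \eqref{lz} is the trivial bound $|z|\ge\Re z$, and the third inequality $\psi(\gs)-\psi(1)>0$ for $\gs>1$ was already established in the proof of Lemma~\ref{Lpsi}, where it was shown that $\psi'(s)>0$ on the positive real axis, so $\psi$ is strictly increasing on $(0,\infty)$.

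The only real work is the middle inequality $\Re\psi(s)\ge\psi(\gs)$. Taking real parts termwise in the Mittag-Leffler series, I would compute
\begin{align*}
\Re\frac{1}{k+s} = \frac{k+\gs}{(k+\gs)^2+\gt^2} \le \frac{1}{k+\gs},
\end{align*}
which is valid because $k+\gs>0$ for all $k\ge0$ when $\gs>1$. Consequently each term of the series satisfies
\begin{align*}
\frac{1}{k+1}-\Re\frac{1}{k+s}\ge\frac{1}{k+1}-\frac{1}{k+\gs},
\end{align*}
and summing over $k\ge0$ yields $\Re\psi(s)+\gamma\ge\psi(\gs)+\gamma$, i.e.\ $\Re\psi(s)\ge\psi(\gs)$, which gives the desired middle inequality after subtracting $\psi(1)$ from both sides.

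There is no real obstacle here: the proof is a direct termwise comparison using the partial fractions expansion, and everything needed is already in \refS{SSpsi}. The one small caveat is checking that the series actually converges absolutely enough to justify taking real parts termwise, but this is standard since the terms $\tfrac{1}{k+1}-\tfrac{1}{k+s}$ are $O(k^{-2})$ uniformly on compacta away from the poles. I would simply remark that the series converges by the same argument used in Lemma~\ref{Lpsi}, and then carry out the elementary estimate above.
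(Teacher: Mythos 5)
Your proof is correct, but it takes a somewhat different route from the paper's. The paper proves the middle inequality $\Re\psi(s)\ge\psi(\gs)$ by a monotonicity argument: it uses the series \eqref{psi'} for $\psi'$ to show that $\Im\psi'(\gs+\ii\gt)<0$ when $\gs,\gt>0$, hence $\tfrac{\partial}{\partial\gt}\Re\psi(\gs+\ii\gt)=-\Im\psi'(\gs+\ii\gt)>0$, so $\Re\psi(\gs+\ii\gt)$ is increasing in $\gt\ge0$ (and by symmetry decreasing in $\gt\le0$), and the minimum value $\psi(\gs)$ is attained at $\gt=0$. You instead work directly with the series \eqref{psi} for $\psi$ itself and compare term by term, using $\Re\frac{1}{k+s}=\frac{k+\gs}{(k+\gs)^2+\gt^2}\le\frac{1}{k+\gs}$. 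Both arguments are elementary and both use only the Mittag--Leffler expansions already recorded in \refS{SSpsi}; yours is arguably slightly more self-contained (no appeal to differentiating along a path), while the paper's makes the strict monotonicity in $|\gt|$ explicit, which could be a useful by-product. Your convergence remark (absolute convergence, terms $O(k^{-2})$) is enough to justify the termwise manipulation. One small point worth noting in either treatment: the middle inequality only needs $\gs>0$, and $\gs>1$ is used solely for the final strict positivity $\psi(\gs)-\psi(1)>0$.
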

\begin{proof}
  The first inequality is trivial, and the last follows since \eqref{psi'}
  implies that $\psi'(z)>0$ for $z>0$. Finally, \eqref{psi'} also implies
  that if $\gs>0$ and $\gt>0$, then $\Im\psi'(\gs+\ii\gt)< 0$, and thus
$\Re\psi(\gs+\ii\gt)$ increases as $\gt$ grows from 0 to $\infty$;
the case $\gt\le0$ follows by symmetry.
\end{proof}

\begin{lemma}\label{Lmgf}
  Let $n\ge2$ and $-1<\gs<0$.
Then, for every complex $z$ with $\Re z <\psi(1-\gs)-\psi(1)$,
\begin{align}\label{qz1}
\E[e^{zD_n}] &=
1 - 
\frac{z}{2\pi\ii}\intgs\frac{\gG(s)\gG(n)/\gG(n+s)}{\psi(1-s)-\psi(1)-z}\dd s
,\end{align}
where the integral is absolutely convergent.
\end{lemma}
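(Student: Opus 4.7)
The plan is to derive \eqref{qz1} by substituting the exact moment formula \eqref{piglet} into the Taylor expansion
\begin{align*}
\E[e^{zD_n}] = 1 + \sum_{k=1}^\infty \frac{z^k}{k!}\E[D_n^k]
\end{align*}
and summing the resulting geometric series in $k$. Formally,
\begin{align*}
\sum_{k=1}^\infty \frac{z^k}{(\psi(1-s)-\psi(1))^k} = \frac{z}{\psi(1-s)-\psi(1)-z},
\end{align*}
which is precisely the integrand in \eqref{qz1}. What remains is to justify the interchange of sum and integral, and to verify that both sides of \eqref{qz1} extend to analytic functions of $z$ on $\{\Re z<\psi(1-\gs)-\psi(1)\}$, so that an identity established on a disk of positive radius extends to the full halfplane by analytic continuation.

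First I would establish the uniform estimates on the vertical line $\Re s=\gs$. Since $-1<\gs<0$, we have $\Re(1-s)=1-\gs>1$, so \refL{LZ} applied at the point $1-s$ yields
\begin{align*}
\Re\bigpar{\psi(1-s)-\psi(1)} \ge \psi(1-\gs)-\psi(1)>0.
\end{align*}
Hence for every $z$ with $\Re z<\psi(1-\gs)-\psi(1)$,
\begin{align*}
|\psi(1-s)-\psi(1)-z| \ge \Re\bigpar{\psi(1-s)-\psi(1)}-\Re z \ge \psi(1-\gs)-\psi(1)-\Re z>0,
\end{align*}
locally uniformly in $z$. Combined with $|\gG(s)\gG(n)/\gG(n+s)|=O(|s|^{-2})$ from \refL{Lhw6}, this shows the integrand in \eqref{qz1} is $O(|s|^{-2})$ uniformly in $z$ on compact subsets, so the integral converges absolutely and defines an analytic function of $z$ on the entire halfplane $\{\Re z<\psi(1-\gs)-\psi(1)\}$ (differentiation under the integral sign). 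Since $\psi(1-\gs)-\psi(1)<\psi(2)-\psi(1)=1$, the MGF on the left is also analytic there, so both sides are analytic on the same connected open set.

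Next I would prove \eqref{qz1} on the disk $|z|<\rho$ for an arbitrary fixed $\rho<\psi(1-\gs)-\psi(1)$. There $|z|/|\psi(1-s)-\psi(1)|\le q:=\rho/(\psi(1-\gs)-\psi(1))<1$ uniformly on the line, and substituting \eqref{piglet} into the Taylor expansion yields
\begin{align*}
\E[e^{zD_n}]-1 = -\sum_{k=1}^\infty \frac{z^k}{2\pi\ii}\intgs \frac{\gG(s)\gG(n)/\gG(n+s)}{(\psi(1-s)-\psi(1))^k}\dd s.
\end{align*}
The modulus of the $(k,s)$-integrand is bounded by $q^k\cdot|\gG(s)\gG(n)/\gG(n+s)|$, whose sum in $k$ gives the integrable dominator $(q/(1-q))|\gG(s)\gG(n)/\gG(n+s)|=O(|s|^{-2})$ on the line; Fubini therefore justifies swapping sum and integral, and evaluating the geometric series produces \eqref{qz1} on $\{|z|<\rho\}$. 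Analytic continuation then extends the identity to the full halfplane. The one genuinely delicate step is the uniform lower bound on $|\psi(1-s)-\psi(1)-z|$ supplied by \refL{LZ}; once it is in hand, everything else is routine dominated-convergence bookkeeping.
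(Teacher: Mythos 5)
Your proof follows essentially the same route as the paper's: apply \refL{LZ} at $1-s$ to get a uniform lower bound on $|\psi(1-s)-\psi(1)-z|$ along the line $\Re s=\gs$, combine with the $O(|s|^{-2})$ bound from \refL{Lhw6} to establish absolute convergence and analyticity of the integral in $z$, prove the identity for small $|z|$ by substituting \eqref{piglet} into the Taylor series and summing the geometric series under a Fubini justification, and extend by analytic continuation. The only small point worth making explicit (as the paper does) is that \eqref{piglet} was stated for $k\ge 2$ and needs the remark that it also holds for $k=1$ by \eqref{hw5}, and that $\E[e^{|z|D_n}]<\infty$ is what licenses the interchange of expectation and power series in the first place.
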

\begin{proof}
  Note first that $1-\gs>1$, so $b:=\psi(1-\gs)-\psi(1)>0$ by \refL{LZ},
which furthermore shows that 
$\Re\bigpar{\psi(1-s)-\psi(1)}\ge b$ for $s=\gs+\ii\gt$. 
Hence, the denominator in the integral in \eqref{qz1} is bounded away from 0
when $\Re z<b$, uniformly for $z$ in any compact subset.
It follows, using \eqref{hw6b},
that the integral in \eqref{qz1} converges absolutely and defines an
analytic function of $z$ in the half-plane $\Re z<b$.

Note also that $1-\gs<2$, and thus $b<\psi(2)-\psi(1)=1$;
hence the half-plane $\Re z<b$ is contained in the half-plane $\Re z<1$
where we know that $\E[e^{z D_n}]$ exists and is analytic.
By analytic continuation, it thus suffices to show \eqref{qz1} for small
$|z|$, and we will in the rest of the proof assume $|z|<b$.
In particular, $|z|<1$, and thus 
$\E[e^{|z| D_n}]<\infty$. Consequently we have by \eqref{piglet}, which
holds also for $k=1$ by \eqref{hw5},
\begin{align}
\E[e^{z D_n}]=1+\sumk \frac{z^k}{k!}\E[D_n^k]
=1-\frac{1}{2\pi\ii}
\sumk\intgs z^k\frac{\gG(s)\gG(n)/\gG(n+s)}{(\psi(1-s)-\psi(1))^k}
.\end{align}
We may interchange the order of summation and integration by Fubini's
theorem, which is justified by \eqref{hw6b} together with \refL{LZ} which gives
$|\psi(1-s)-\psi(1)|\ge b>|z|$; then \eqref{qz1} follows by summing the
geometric series.
\end{proof}

The next step is to shift the line of integration to the right.
To do this in general would require a study of the roots of $\psi(s)-\psi(1)=z$
in the complex plane. We consider for simplicity only 
the case of real $z$; extensions to complex $z$ are left to the reader.

\subsection{Real $z$} \label{SSmgfreal}

Consider the equation
\begin{align}\label{qx1}
  \psi(1+x)-\psi(1)=z
\end{align}
for a real $z$. By \refL{Lpsi}, and using the notation there, the roots are 
\begin{align}\label{qx2}
  \rho_i(z):=s_i\bigpar{z+\psi(1)}-1,
\qquad i=0,1,2,\dots,
\end{align}
with $\rho_0(z)\in(-1,\infty)$ and $\rho_i(z)\in(-(i+1),-i)$ for $i\ge1$.
We are mainly interested in $\rho(z):=\rho_0(z)$, the largest root;
thus $\rho(z)$ is the unique real number in $(-1,\infty)$ satisfying
\begin{align}\label{rho}
  \psi\bigpar{1+\rho(z)}-\psi(1)=z.
\end{align}
The function $\rho:(-\infty,\infty)\to(-1,\infty)$ 
is strictly increasing and continuous, in fact analytic
(since $\psi$ is on $(0,\infty)$), and \eqref{rho} shows that $\rho(0)=0$.
Furthermore, since \eqref{no4} yields $\psi(2)-\psi(1)=1$, 
\eqref{rho} also shows $\rho(1)=1$.
Hence, $\rho$ is a bijection $(-\infty,1)\to(-1,1)$.

Fix a real $z\in(-\infty,1)$. Then, as just said, $\rho(z)\in(-1,1)$.
The zeroes of the denominator in \eqref{qz1} are $s=-\rho(z)\in(-1,1)$
and $-\rho_1(z)<-\rho_2(z)<\dots$, with $-\rho_1(z)\in(1,2)$.
Write for convenience $\rho:=\rho(z)$, and
take $\gs\in(-1,\min(\rho,0))$. Then 
\begin{align}
\psi(1-\gs)-\psi(1)>\psi\bigpar{1-\rho}- \psi(1)=z, 
\end{align}
and thus \refL{Lmgf} applies. 

We now shift the line of integration in
\eqref{qz1} to some $\gs\in(1,-\rho_1(z))$; this is easily justified 
using \eqref{hw6b} and noting that the proof of \refL{Lhw7}\ref{Lhw7B}
also shows, more generally, that $|\psi(1-s)-\psi(1)-z|\ge c>0$
when $|\gt|\ge1$ for any $z\in\bbR$.
We pass two poles of the integrand, at $s=0$ and $s=-\rho$,
and we pick up $-2\pi\ii$ times the residues there.
We assume that $z\neq0$, since the case $z=0$ is trivial; then these two
poles are distinct and both are simple.
The residue at 0 of the integrand in \eqref{qz1} is simply $-1/z$, so the
contribution there
is $-1$, which cancels the constant 1 in \eqref{qz1}.
The main term comes from the residue at $-\rho$, which is
$-\gG(-\rho)\gG(n)/[\gG(n-\rho)\psi'(1+\rho)]$.
Hence we obtain, for any $1<\gs<-\rho_1(z)=1+|s_1(z-\gamma)|$,
\begin{align}\label{qz2}
\E[e^{zD_n}] &
=
-z
\frac{\gG(-\rho(z))\gG(n)}{\psi'(1+\rho(z))\gG(n-\rho(z))}
-\frac{z}{2\pi\ii}\intgs\frac{\gG(s)\gG(n)/\gG(n+s)}{\psi(1-s)-\psi(1)-z}\dd s
.\end{align}
Since $z<1$, we have $\rho_1(z)<\rho_1(1)$, and thus we may here always
choose $\gs$ as
\begin{align}\label{gs1}
\gsx:=  -\rho_1(1)=1-s_1(1+\psi(1))=1+|s_1(\psi(2))|
\doteq 1.457  
.\end{align}

We note also that as $z\to0$, we have $\rho(z)\to0$ and
\begin{align}\label{qx3}
-z\gG(-\rho(z))
=
\frac{z}{\rho(z)}\gG(1-\rho(z))
\to 
\frac{1}{\rho'(0)}
=\psi'(1)
,\end{align}
since $\psi'(1)\rho'(0)=1$ follows by 
differentiation of \eqref{rho}.
We thus interpret $-z\gG(-\rho(-z))=\psi'(1)$ for $z=0$ and note that then
\eqref{qz2} holds trivially for $z=0$ too.

This leads to the following result.

\begin{theorem}\label{Tmgf}
 For any real $z<1$, \eqref{qz2} holds for all $n\ge2$
with   $\gs=\gsx$ given by \eqref{gs1}.
Hence, 
\begin{align}\label{qz3}
\E[e^{zD_n}] &
=
\frac{-z\gG(-\rho(z))}{\psi'(1+\rho(z))}\frac{\gG(n)}{\gG(n-\rho(z))}
+O\bigpar{n^{-\gsx}}
\end{align}
and
\begin{align}\label{qz33}
\E[e^{zD_n}] &
=
\frac{-z\gG(-\rho(z))}{\psi'(1+\rho(z))}n^{\rho(z)}
\cdot\bigpar{1+O\bigpar{n^{-\min(1,\gsx+\rho(z))}}}
.\end{align}
Furthermore, \eqref{qz3} holds uniformly for $z<1-\gd$ for any $\gd>0$,
and \eqref{qz33} holds uniformly for $z$ in a compact subset of $(-\infty,1)$.
\end{theorem}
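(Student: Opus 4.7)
\textbf{Proof plan for \refT{Tmgf}.} The identity \eqref{qz2} has already been derived in the text immediately preceding the theorem, for any $\gs\in(1,-\rho_1(z))$ and any real $z<1$ (with the case $z=0$ being trivial). Since $\rho_1(z)$ is a continuous, strictly increasing function of $z$ on $(-\infty,1)$ (because $\psi$ is monotonic on the interval $I_1=(-1,0)$ from which $\rho_1(z)$ originates via \eqref{qx2}), for every $z<1$ we have $\rho_1(z)<\rho_1(1)$, so $\gsx:=-\rho_1(1)$ always satisfies $1<\gsx<-\rho_1(z)$. This justifies taking $\gs=\gsx$ in \eqref{qz2} uniformly for all $z<1$.

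The next step is to bound the integral remainder in \eqref{qz2} on the vertical line $\Re s=\gsx$. By \refL{Lhw6}\ref{Lhw7B} applied with $\gs=\gsx>0$, the factor $\gG(s)\gG(n)/\gG(n+s)$ is $O(|s|^{-2}n^{-\gsx})$ uniformly in $n\ge2$. For the denominator, I will prove a mild generalization of \refL{Lhw7}: for $s=\gsx+\ii\gt$, the quantity $|\psi(1-s)-\psi(1)-z|$ is uniformly bounded below, both in $\gt\in\bbR$ and in $z$ ranging over any compact subset of $(-\infty,1)$. The large-$|\gt|$ behavior follows from \eqref{5.11.2}, which gives $\psi(1-s)=\log|\gt|+O(1)$, so $|\psi(1-s)-\psi(1)-z|\to\infty$ uniformly in such $z$; the bounded-$|\gt|$ case follows from a compactness argument using the fact that on the line $\Re s=\gsx$ the function $\psi(1-s)-\psi(1)$ never takes any value in the compact range of $z$ considered (by the choice $\gsx=-\rho_1(1)$ and monotonicity of $\rho_1$). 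Combining these estimates yields
\begin{align*}
\lrabs{\intgs\frac{\gG(s)\gG(n)/\gG(n+s)}{\psi(1-s)-\psi(1)-z}\dd s}
= O\bigpar{n^{-\gsx}}
\end{align*}
uniformly for $z$ in any compact subset of $(-\infty,1)$, and more precisely uniformly for $z\le 1-\gd$ (the implicit constant being allowed to depend on $\gd$). This together with \eqref{qz2} yields \eqref{qz3} with the stated uniformity.

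To deduce \eqref{qz33}, I will combine \eqref{qz3} with the asymptotic expansion \eqref{an2}, which gives $\gG(n)/\gG(n-\rho(z))=n^{\rho(z)}\bigpar{1+O(n^{-1})}$, uniformly for $\rho(z)$ in any compact set; this uniformity follows because for $z$ in a compact subset of $(-\infty,1)$, $\rho(z)$ stays in a compact subset of $(-1,1)$ by continuity and strict monotonicity of $\rho$. The prefactor $-z\gG(-\rho(z))/\psi'(1+\rho(z))$ is bounded above and below on such sets (the limit \eqref{qx3} handles the removable singularity at $z=0$). Dividing the error $O(n^{-\gsx})$ from \eqref{qz3} by $n^{\rho(z)}$ produces $O(n^{-\gsx-\rho(z)})$, and combining with the $O(n^{-1})$ relative error from the Gamma ratio gives the relative error $O\bigpar{n^{-\min(1,\gsx+\rho(z))}}$ in \eqref{qz33}.

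The main obstacle is the uniformity of the denominator lower bound in the two-parameter regime $(\gt,z)$. The concern is that as $z\uparrow 1$ one has $-\rho_1(z)\downarrow\gsx$, so a pole of the integrand approaches the contour; restricting to $z\le 1-\gd$ keeps the pole a definite distance $\gd'>0$ from the line $\Re s=\gsx$, preserving the lower bound. A secondary technical point is that at $z=0$ the factor $-z\gG(-\rho(z))$ and the residue formula have a removable singularity; handling this requires using \eqref{qx3} so that the stated formulas remain valid (and continuous) at $z=0$, which one then carries through the error analysis. Apart from these points, the argument is a direct application of the contour-shift / residue machinery already used in \refSs{SED2} and \ref{SSEDk}.
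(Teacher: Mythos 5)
Your proposal follows essentially the same route as the paper's proof: start from \eqref{qz2} with $\gs=\gsx$, bound the contour integral using \refL{Lhw6} and a lower bound on $|\psi(1-s)-\psi(1)-z|$ along the line $\Re s=\gsx$, and then pass from \eqref{qz3} to \eqref{qz33} by an asymptotic estimate for the Gamma ratio. Two points where you diverge slightly, and where a bit more care is needed.

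First, the uniformity claim ``uniformly for $z\le 1-\gd$'' is a ray, not a compact set, so a bare ``compactness argument'' does not directly give a uniform lower bound on the denominator. It can be made to work: for bounded $|\gt|$ the set $K=\{\psi(1-\gsx-\ii\gt)-\psi(1):|\gt|\le T\}$ is compact and disjoint from the closed ray $(-\infty,1-\gd]$ (at $\gt=0$ the value is $1>1-\gd$; for $\gt\ne0$ it is non-real), and a compact set disjoint from a closed set has positive distance from it. The paper instead handles the very negative $z$ regime with the explicit inequality $\Re\bigl(\psi(1-s)-\psi(1)-z\bigr)\ge -C-z\ge|z|/2$ for $z\le-2C$. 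Either formulation closes the gap; yours as written asserts the ray-uniformity without actually justifying it.

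Second, for \eqref{qz33} you invoke \eqref{an2}, which is stated for a fixed exponent $b$, and then assert uniformity over compact $\rho$-sets as a consequence of continuity. That is fine but needs a remark (e.g.\ that the coefficients $g_k(b)$ are polynomial in $b$, or an equivalent argument). The paper sidesteps this by applying the mean value theorem directly to $\log\gG$: $\log\gG(n)-\log\gG(n-\rho)=\psi(n+O(1))\rho=\rho\log n+O(n^{-1})$ uniformly for $|\rho|\le1$, which immediately gives the relative error $O(n^{-1})$ with clean uniformity over all $z<1$ (not just compact sets). The mean-value approach is a bit simpler here, but both routes work. Your accounting of the relative error as $O\bigl(n^{-\min(1,\gsx+\rho(z))}\bigr)$, after dividing the absolute error $O(n^{-\gsx})$ by the main term of order $n^{\rho(z)}$, matches the paper's bound.

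(Minor typo: the reference ``\refL{Lhw6}\ref{Lhw7B}'' mixes two lemma labels; you presumably mean part~(ii) of \refL{Lhw6}, i.e.\ \eqref{hw6c}, together with the lower bound from \refL{Lhw7} or its extension to the shifted denominator.)
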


\begin{proof}
  We have already shown that \eqref{qz2} holds with $\gs=\gsx$.
Furthermore, by \eqref{gs1} and \eqref{qx1}--\eqref{qx2}, 
  \begin{align}
    \psi(1-\gsx)-\psi(1)=
\psi(1+\rho_1(1))-\psi(1)
=1
. \end{align}
Hence, if $s=\gsx+\ii\gt$, then
the denominator $\psi(1-s)-\psi(1)-z$ in the integral in \eqref{qz2} is $1-z>0$
when $\gt=0$, and as seen in the proof of \refL{Lpsi}, it is non-real for
all $\gt\neq0$; furthermore, 
$\Re\bigpar{\psi(1-s)-\psi(1)-z}\to+\infty$ as $\gt\to\pm\infty$ 
by \eqref{5.11.2}.
It follows, by continuity and compactness, that 
$\bigabs{\psi(1-s)-\psi(1)-z}$ is bounded below by some $c(\gd)>0$
for all such $s$ and $z\le 1-\gd$; 
moreover, $\Re\bigpar{\psi(1-s)-\psi(1)-z}\ge -C-z\ge |z|/2$ if $z\le-2C$.
It follows, using \eqref{hw6b} and  \cite[5.11.12]{NIST},
that the last term in \eqref{qz2} is 
$O\bigpar{\gG(n)/\gG(n+\gsx)}=O\bigpar{n^{-\gsx}}$ uniformly for
$z\le1-\gd$, which proves  \eqref{qz3}.

The mean value theorem yields, using \eqref{no2} and \eqref{5.11.2},
uniformly for $|\rho|\le1$,
\begin{align}
  \log\gG(n)-\log\gG(n-\rho)
=\psi\bigpar{n+O(1)}\rho=\rho\log n + O\bigpar{n\qw}.
\end{align}
Thus \eqref{qz33} follows from \eqref{qz3}, 
recalling that $|\rho(z)|<1$ for all $z<1$.
\end{proof}

Note that $\gsx>1>-\rho(z)$, so the exponent in the error term 
in \eqref{qz33} is always
negative, and in fact less that $1-\gsx=s_2(\psi(2))\doteq-0.457$.
Note also that the proof shows that
for any fixed $z$, the exponent may be improved.

In the following  two subsections we give some consequences of \refT{Tmgf}.

\subsection{A central limit theorem}
\label{sec:CLT}

As a corollary of \refT{Tmgf}, we obtain a new proof of the following CLT.
As mentioned in Section \ref{sec:summary}, this has been 
proved in \cite[Theorem 1.7]{beta1} by analysing a recursion for the moment
generating function; other proofs by different methods are given in 
\cite{beta2-arxiv}, \cite{iksanovCLT}, and \cite{kolesnik}.

\begin{Theorem}
\label{TNorm}
\begin{align}\label{tnorm1}
  \frac{D_n - \mu \log n}{\sqrt{\log n}} \dto  
\mathrm{Normal}(0, \sigma^2) \ \mbox{ as } n \to \infty 
\end{align}
where
\begin{align}\label{tnorm2}
 \mu := 1/\zeta(2) = 6/\pi^2 \doteq 0.6079 ; \quad 
\sigma^2 := 2 \zeta(3)/\zeta(2)^3 \doteq 0.5401
.\end{align}
\end{Theorem}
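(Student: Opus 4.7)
The plan is to apply \refT{Tmgf} with $z=z_n:=t/\sqrt{\log n}$ for a fixed real $t$, and show that the moment generating function of the centered and scaled variable $W_n:=(D_n-\mu\log n)/\sqrt{\log n}$ converges pointwise to $\exp(\sigma^2 t^2/2)$. Since $z_n\to 0$, for all sufficiently large $n$ we have $z_n$ in any prescribed compact subset of $(-\infty,1)$, so the uniform version of \eqref{qz331} applies.

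The first step is to Taylor-expand the implicitly defined function $\rho(z)$ of \eqref{rho} around $z=0$. Differentiating $\psi(1+\rho(z))-\psi(1)=z$ once and twice, using $\rho(0)=0$, $\psi'(1)=\zeta(2)$, and $\psi''(1)=-2\zeta(3)$, yields
\begin{align*}
\rho'(0)=\frac{1}{\psi'(1)}=\frac{1}{\zeta(2)}=\mu,
\qquad
\rho''(0)=-\frac{\psi''(1)}{\psi'(1)^3}=\frac{2\zeta(3)}{\zeta(2)^3}=\sigma^2,
\end{align*}
so
\begin{align*}
\rho(z_n)=\mu z_n+\tfrac12\sigma^2 z_n^2+O(z_n^3).
\end{align*}
Multiplying by $\log n$ and using $z_n^2\log n=t^2$ and $z_n^3\log n=O((\log n)^{-1/2})$, we obtain
\begin{align*}
\rho(z_n)\log n=\mu t\sqrt{\log n}+\tfrac12\sigma^2 t^2+o(1),
\end{align*}
and hence $n^{\rho(z_n)}=\exp\bigpar{\mu t\sqrt{\log n}+\tfrac12\sigma^2 t^2+o(1)}$.

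Next I would handle the prefactor in \eqref{qz331}. By \eqref{qx3}, $-z\gG(-\rho(z))\to\psi'(1)=\zeta(2)$ as $z\to 0$, and by continuity $\psi'(1+\rho(z))\to\psi'(1)=\zeta(2)$. Therefore
\begin{align*}
\frac{-z_n\gG(-\rho(z_n))}{\psi'(1+\rho(z_n))}\to 1.
\end{align*}
Since $\rho(z_n)\to 0$ and $\gsx>1$, the error term $n^{-\min(1,\gsx+\rho(z_n))}$ in \eqref{qz331} is $n^{-c}$ for some $c>0$ bounded away from $0$, hence $o(1)$. Combining everything gives
\begin{align*}
\Ex[e^{z_n D_n}]=\bigpar{1+o(1)}\exp\Bigpar{\mu t\sqrt{\log n}+\tfrac12\sigma^2 t^2}.
\end{align*}
Multiplying by $e^{-\mu t\sqrt{\log n}}$ yields $\E[e^{tW_n}]\to e^{\sigma^2 t^2/2}$ for every real $t$. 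By Curtiss's theorem (pointwise convergence of moment generating functions in a neighborhood of the origin implies convergence in distribution), this establishes \eqref{tnorm11}.

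The only mildly delicate point is the quantitative control of $\rho(z_n)\log n$: one needs the expansion of $\rho$ to second order, with a uniform cubic remainder, since the linear term contributes the centering while the quadratic term produces the limiting variance. Everything else is a direct substitution into \eqref{qz331}.
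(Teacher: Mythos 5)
Your proposal is correct and is essentially identical to the paper's proof: both set $z_n=t/\sqrt{\log n}$, apply \eqref{qz33} together with \eqref{qx3} to get $\E[e^{z_nD_n}]=e^{\rho(z_n)\log n+o(1)}$, Taylor-expand the analytic implicit function $\rho$ to second order with $\rho'(0)=1/\zeta(2)$ and $\rho''(0)=2\zeta(3)/\zeta(2)^3$, and conclude by pointwise convergence of moment generating functions. Your write-up is only marginally more explicit about the vanishing of the prefactor and the power-law error term, but the structure and all key computations coincide with the paper's argument.
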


\begin{proof}
Let $z=z_n$ be any sequence of real numbers with $z_n\to0$.
Then \eqref{qz33} and \eqref{qx3} yield, as \ntoo,
\begin{align}\label{qy1}
  \E[e^{z_nD_n}]=n^{\rho(z_n)}\cdot\bigpar{1+o(1)}
=e^{\rho(z_n)\log n+o(1)}
.\end{align}
We have $\rho(0)=0$, and
the implicit function theorem shows that $\rho$ is analytic at 0 (and
everywhere). Hence,
\begin{align}
  \label{qy2}
\rho(z_n)
=z_n\rho'(0)+\tfrac12z_n^2\bigpar{\rho''(0)+o(1)}
.\end{align}
Fix $t\in\bbR$ and let $z_n:=t/\sqrt{\log n}$ (for $n\ge2$).
Then \eqref{qy1}--\eqref{qy2} yield
\begin{align}\label{qy3}
  \E e^{t(D_n-\rho'(0)\log n)/\sqrt{\log n}}&
=
 \E e^{z_n(D_n-\rho'(0)\log n)}
\\\notag&
= \exp\bigpar{\tfrac12z_n^2(\rho''(0)+o(1))\log n + o(1)}
\\\notag&
\to e^{\frac12t^2\rho''(0)}
\end{align}
as \ntoo.
Since this  holds for every real $t$, we thus have the CLT
 \eqref{tnorm1} with $\mu=\rho'(0)$ and $\gss=\rho''(0)$.

Finally, 
differentiating \eqref{rho} yields
$\rho'(z)=1/\psi'(1+\rho(z))$ and in particular
$\rho'(0)=1/\psi'(1)=1/\zeta(2)$ 
as already noted in \eqref{qx3},
and differentiating again  yields
\begin{align}\label{qy4}
  \rho''(0)=-\frac{\psi''(1)}{\psi'(1)^3}=\frac{2\zeta(3)}{\zeta(2)^3},
\end{align}
which completes the proof.
\end{proof}

\subsection{Large deviations}
\label{sec:LD}
As another corollary of \refT{Tmgf}, we obtain large deviation results
by the G\"artner--Ellis theorem. 
We follow \cite[Section 2.3]{DZ}, but note that $n$ there is replaced by the 
``speed'' $\log n$ (see \cite[Remark (a) p.~44]{DZ}).
Thus we define (for $n\ge2$)
\begin{align}\label{qd1}
  Z_n:=\frac{1}{\log n}D_n
\end{align}
and note that  \eqref{qz33} 
and \refR{Rmgf1} yield as \ntoo,
for any fixed $\gl\in\bbR$,
\begin{align}\label{qd2}
  \frac{1}{\log n}\log \E \bigsqpar{e^{(\log n)\gl Z_n}}
=
  \frac{1}{\log n}\log \E e^{\gl D_n}
\to \gL(\gl):=
  \begin{cases}
    \rho(\gl), & \gl<1,
\\
+\infty, & \gl\ge1.
  \end{cases}
\end{align}
The Fenchel--Legendre transform $\gLX$ of $\gL$ 
\cite[Definition 2.2.2]{DZ}
is defined by
\begin{align}\label{qd3}
  \gLX(x):=\sup_{\gl\in\bbR}\bigcpar{x\gl-\gL(\gl)}
=\sup_{-\infty<\gl<1}\bigcpar{x\gl-\rho(\gl)}.
\end{align}
Recall that $z\mapsto\rho(z)$ is a bijection of $(-\infty,1)$ onto $(-1,1)$,
which by \eqref{rho} is the inverse function of
\begin{align}\label{qd5g}
\rho\mapsto  g(\rho):=\psi(1+\rho)-\psi(1).
\end{align}
Hence, if we define
\begin{align}\label{qd5h}
h(x,\rho):=xg(\rho)-\rho,
\end{align}
then 
\eqref{qd3} yields
\begin{align}\label{qd4}
  \gLX(x) 
= \sup_{-1<\rho<1}\cpar{xg(\rho)-\rho}
= \sup_{-1<\rho<1}{h(x,\rho)}
= \sup_{-1<\rho\le1}{h(x,\rho)}
\end{align}
where the last equality holds by continuity.
We have
\begin{align}\label{hrho}
\frac{\partial}{\partial\rho}h(x,\rho)=x\psi'(1+\rho)-1
\end{align}
and for any fixed $x>0$, this is
by \eqref{psi'}
strictly decreasing from $+\infty$ to $x\psi'(2)-1=x(\zeta(2)-1)-1$
as $\rho$ grows from $-1$ to 1.
Hence, for $x>0$,
$h(x,\rho)$ is a concave function of $\rho$, and the supremum in 
\eqref{qd4} is attained at a unique $\rhox(x)\in(-1,1]$ given by
\begin{align}\label{rhox}
  \begin{cases}
\psi'(1+\rhox(x))=1/x,   & 0<x\le x_1:=(\zeta(2)-1)\qw,
\\
\rhox(x)=1,   & x\ge x_1.
  \end{cases}
\end{align}
For $x\le0$, $h(x,\rho)$ is a decreasing function of $\rho$, and thus
the supremum in \eqref{qd4} is obtained by letting $\rho\to-1$,
and thus $g(\rho)\to-\infty$. 
Thus we obtain, combining the cases,
\begin{align}\label{qd6}
  \gLX(x)=
  \begin{cases}
+\infty, & x<0,
\\
1, & x=0,
\\
xg(\rhox(x))-\rhox(x), & x>0.
  \end{cases}
\end{align}
In particular, for $x\ge x_1$, we have by \eqref{rhox} and \eqref{qd6}
\begin{align}\label{qd7}
\gLX(x)=xg(1)-1=x-1.  
\end{align}

It follows from \eqref{qd3} or \eqref{qd4} that $\gLX$ is convex
and that it is continuous on $[0,\infty)$.
Taking the derivative in \eqref{qd6} yields,
for $x>0$,
\begin{align}\label{qd8}
  \frac{\ddx}{\ddx x}\gLX(x)=
g(\rhox(x))+\bigpar{xg'(\rhox(x))-1}\rhox'(x)
=
g(\rhox(x)),
\end{align}
since \eqref{rhox} implies both $xg'(\rhox(x))-1=0$ for $0<x\le x_1$ 
and $\rhox'(x)=0$ for $x\ge x_1$.
(In particular, we see that $\gLX(x)$ is continuously differentiable also at
$x=x_1$.) 
The derivative is thus strictly increasing for $0<x\le x_1$, so $\gLX$ is
strictly convex on $[0,x_1]$, while $\gLX(x)$ is linear for $x\ge x_1$ as shown
already in \eqref{qd7}.
Hence, in the terminology of \cite[Definition 2.3.3]{DZ}, $y\in\bbR$ is an
\emph{exposed point} of $\gLX$ if  $0<y<x_1$.

We note that \eqref{qd8} yields
$ \frac{\ddx}{\ddx x}\gLX(x)=0$ when $\rhox(x)=0$ 
(since $g(0)=0$), which by \eqref{rhox} holds if and only if
$x=x_0:=\psi'(1)\qw=6/\pi^2$;
furthermore,
\eqref{qd6} shows that $\gLX(x_0)=0$.
Since $\gLX$ is convex, it thus attains its minimum at $x_0$, and the
minimum is 0 (as it is has to be).

The G\"artner--Ellis theorem \cite[Theorem 2.3.6]{DZ}
now implies Theorem \ref{Tdev1}, restated here. 
\begin{theorem}\label{Tdev}
As \ntoo, we have:
  \begin{align}\label{tdev1}
  \Pr(D_n<x\log n)& = n^{-\gLX(x)+o(1)}, \qquad\text{if}\quad 0<x\le x_0,
\\ \label{tdev2}
\Pr(D_n>x\log n)& =n^{-\gLX(x)+o(1)}, \qquad\text{if}\quad x_0\le x< x_1,
\\   \label{tdev3} 
  \Pr(D_n>x\log n)& \le n^{-\gLX(x)+o(1)}, \qquad\text{if}\quad x\ge x_1.
  \end{align}
\end{theorem}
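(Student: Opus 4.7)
The plan is to apply the Gärtner--Ellis theorem \cite[Theorem 2.3.6]{DZ} to the rescaled sequence $Z_n = D_n/\log n$, treating $\log n$ as the speed. The key input, \eqref{qd2}, is already in hand: it follows immediately from \refT{Tmgf} (specifically \eqref{qz33}) together with \refR{Rmgf1} for $\gl \ge 1$. The limiting logarithmic moment generating function $\gL(\gl) = \rho(\gl)$ on $(-\infty,1)$ and $+\infty$ elsewhere is lower semicontinuous and differentiable on the interior of its effective domain (since $\rho$ is analytic there, being the inverse of $g(\rho) = \psi(1+\rho)-\psi(1)$). The Fenchel--Legendre transform $\gLX$ has already been computed in \eqref{qd6}; I would also reuse the observations that $\gLX$ is convex, attains its minimum value $0$ at $x_0 = 6/\pi^2$, is strictly convex on $(0,x_1]$, and is linear (hence not strictly convex) on $[x_1,\infty)$.

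First I would identify the exposed points of $\gLX$ (in the sense of \cite[Definition 2.3.3]{DZ}). The derivative $\tfrac{\ddx}{\ddx x}\gLX(x) = g(\rhox(x))$ from \eqml{qd8} is strictly increasing on $(0,x_1)$ and constant on $[x_1,\infty)$; at each $y \in (0,x_1)$ the exposing hyperplane is the tangent line at $y$, with slope $g(\rhox(y)) \in (-\infty, 1)$, which lies in the interior of the effective domain of $\gL$. Hence every $y \in (0,x_1)$ is an exposed point of $\gLX$ with exposing hyperplane coming from a $\gl$ in the interior of $\{\gL < \infty\}$. This verifies all hypotheses of the Gärtner--Ellis theorem (steepness is automatic since $\gL(\gl) = \rho(\gl) \to 1$ as $\gl \uparrow 1$, but the critical condition needed is just that the exposed points used for the lower bound admit such hyperplanes).

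Next I would translate the conclusion into the three claims. The Gärtner--Ellis upper bound gives, for every closed $F \subseteq \bbR$,
\begin{align*}
\limsup_{\ntoo} \frac{1}{\log n}\log \Pr(Z_n \in F) \le -\inf_{x \in F} \gLX(x).
\end{align*}
Taking $F = (-\infty, x]$ with $x \le x_0$ and using that $\gLX$ is decreasing on $(0,x_0]$ yields $\inf_F \gLX = \gLX(x)$ and gives the $\le$ half of \eqref{tdev1}; taking $F = [x,\infty)$ with $x \ge x_0$ and using that $\gLX$ is increasing on $[x_0,\infty)$ yields the $\le$ halves of \eqref{tdev2} and \eqref{tdev3}. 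The Gärtner--Ellis lower bound gives, for every open $G \subseteq \bbR$,
\begin{align*}
\liminf_{\ntoo} \frac{1}{\log n}\log \Pr(Z_n \in G) \ge -\inf_{x \in G \cap \cE}\gLX(x),
\end{align*}
where $\cE$ denotes the set of exposed points. For $0 < x < x_0$, applying this to $G = (-\infty, x)$ (using that points just below $x$ are exposed and $\gLX$ is continuous) yields the matching lower bound in \eqref{tdev1}. For $x_0 < x < x_1$, applying it to $G = (x,\infty)$ similarly yields \eqref{tdev2}. The boundary case $x = x_0$ is trivial since $\gLX(x_0) = 0$ and the total probability is $1$.

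The main obstacle is the lower bound when $x \ge x_1$: since no $y \ge x_1$ is an exposed point of $\gLX$, the Gärtner--Ellis lower bound yields only the trivial information (the infimum over exposed points in $(x,\infty)$ is not attained at $x$), which is exactly why the statement \eqref{tdev3} has only $\le$. The minor technical nuisance is checking that $\rho(\gl)$, defined analytically via \refL{Lpsi} and \eqref{rho}, has the steepness/differentiability properties needed at the boundary $\gl = 1$; this is handled by noting that $\rho'(\gl) = 1/\psi'(1+\rho(\gl))$ stays bounded as $\gl \uparrow 1$ (since $\rho(\gl) \to 1$ and $\psi'(2) = \zeta(2) - 1 > 0$), so in fact $\gL$ is \emph{not} steep at $1$; however, steepness is only invoked in \cite[Theorem 2.3.6]{DZ} to promote the exposed-point lower bound to all points in the interior of $\{\gL^* < \infty\}$, and the failure at $\gl = 1$ corresponds precisely to the loss of a matching lower bound for $x \ge x_1$, consistent with the asymmetric form of our statement.
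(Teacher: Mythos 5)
Your proposal is correct and follows exactly the route the paper takes: identify the limiting scaled cumulant generating function $\gL(\gl)=\rho(\gl)$ on $(-\infty,1)$ and $+\infty$ beyond, compute $\gLX$ via Fenchel--Legendre, check that the exposed points are precisely $(0,x_1)$, and invoke the G\"artner--Ellis theorem. The paper simply cites \cite[Theorem 2.3.6]{DZ} and stops, so the main difference is that you unpack how the G\"artner--Ellis upper and lower bounds for the sets $(-\infty,x]$, $(-\infty,x)$, $[x,\infty)$, $(x,\infty)$ translate into the three displays, using monotonicity of $\gLX$ on either side of $x_0$ and its continuity; that is exactly what the paper leaves implicit.

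One small internal inconsistency worth fixing: you first write that ``steepness is automatic since $\rho(\gl)\to1$ as $\gl\uparrow1$,'' and then two sentences later correctly observe that $\gL$ is \emph{not} steep at $1$ because $\rho'(\gl)=1/\psi'(1+\rho(\gl))\to1/\psi'(2)<\infty$. The second statement is the right one; the first should be dropped. Since $\gL$ is not essentially smooth, the full LDP is not available, and you are right that the lower bound only applies at exposed points --- which is precisely what makes \eqref{tdev3} a one-sided bound and what makes the case distinction between $(x_0,x_1)$ and $[x_1,\infty)$ necessary. With that small correction, the argument is complete and matches the paper's.
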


\refT{Tdev} improves estimates for the upper tail in 
\cite[Theorem 1.4]{beta1}.
As a sanity check we note that \eqref{tdev1} and \eqref{tdev2} imply 
that $D_n$ is  concentrated at $x_0\log n=\frac{6}{\pi^2}\log n$,
and in particular $D_n/\log n\pto \frac{6}{\pi^2}$, which is proved in
\cite{beta1}, and also follows from \refT{TD} and \eqref{kb9}, or from
\refT{TNorm}.

\begin{remark}
  The results in this section are based on \eqref{qz1} which is obtained by
  summing the corresponding results for the moments $\E[D_n^k]$.
For real $z$, an alternative is to define the (signed) measure
  \begin{align}\label{axz0}
    \Xi_z:= z\intoo e^{zt}\cL(P_{t,1}) \dd t
.  \end{align}
(This is an infinite positive measure for $z>0$ 
and a finite negative measure for $z<0$.)
Then
\begin{align}
  \E e^{zD_n}= 1 + \intoo ze^{zt}\Pr(D_n>t)\dd t
\end{align}
and thus \eqref{cx3}  yields
\begin{align}\label{axz1}
\E e^{zD_n}&
=1+\intoi\bigsqpar{1-(1-x)^{n-1}}\dd \Xi_z(x).
\end{align}
Furthermore, we obtain from \eqref{axz0} and \eqref{b5} the Mellin transform,
for $\Re s=\gs>1$ and $\psi(\gs)>\psi(1)+z$,
\begin{align}\label{axz4}
\intoi x^{s-1}\dd\Xi_z(x)&
= 
z\intoo e^{t(z-\psi(s)+\psi(1))}\dd t 
=\frac{z}{\psi(s)-\psi(1)-z}
.\end{align}
For $z<1$ we may then by the methods above obtain \eqref{qz1} and
\eqref{qz2}. 

For our purposes, we prefer the method above, but we mention the alternative
since it might have other uses.  
\end{remark}

\section{Final remarks}
As mentioned in the introduction, this article is part of a broad project investigating different aspects of the random tree model.
The document \cite{beta2-arxiv} is intended to maintain a current overview of the project, summarizing known results, open problems and heuristics, 
and indicating the range of proof techniques. 
Let us mention just two aspects related to this article.

{\bf 1.}  The definition of $D_n$ involves two levels of randomness: 
the realization of the tree and the distribution of leaf heights within that realization. 
A start at quantifying this feature is a ``lack of correlation" result in \cite[Theorem 1.6]{beta1}.

{\bf 2.} Heuristics for the height of $\CTCS(n)$, that is the maximum leaf height in a realization, are discussed in \cite{beta2-arxiv}.
This problem seems surprisingly subtle: the naive guess based on the Normal approximation (Theorem \ref{TNorm1}) is definitely incorrect.

\section*{Acknowledgments}
Thanks to Boris Pittel for extensive interactions regarding this project.
 Thanks to B\'{e}n\'{e}dicte Haas for his careful explanation of how our setting fits into the general theory of exchangeable random partitions.


\appendix

\section{Proof of \refL{LP}}\label{AParseval}

We repeat the statement of \refL{LP}.

\newtheorem*{LemmaLP}{Lemma \ref{LP}}
\begin{LemmaLP}
Suppose that $f$ is a locally integrable function and $\mu$ a measure
on $\bbR_+$,
and that $\gs\in\bbR$ is such that $\intoo x^{\gs-1}|f(x)|<\infty$ 
and $\intoo x^{-\gs}\dd\mu<\infty$, i.e., the
Mellin transforms $\Mf(s)$ and $\Mmu(1-s)$ are defined  when 
$\Re s=\gs$. 
Suppose also  that the integral $\intgs\Mf(s)\M\mu(1-s)\dd s$ converges
at least conditionally.
Suppose further that $x^\gs f(x)$ is bounded and that $f$ is 
$\mu$-a.e.\ continuous.
Then 
\begin{align}
 \intoo f(x)\dd\mu(x) = \frac{1}{2\pi\ii}\intgs\Mf(s)\M\mu(1-s)\dd s.
\tag{\ref{lp}}
\end{align}
\end{LemmaLP}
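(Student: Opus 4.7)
The plan is to reduce \eqref{lp} to a Fourier Parseval identity via the substitution $x=e^{-y}$, and then settle that identity by a smooth, even regularisation combined with an Abel-style integration-by-parts. The conditional (rather than absolute) convergence on the right of \eqref{lp} is the main obstacle, and the evenness of the cutoff is what will rescue the argument. Concretely, set $F(y):=f(e^{-y})e^{-\gs y}$ and let $\mu^{*}$ be the finite Borel measure on $\bbR$ defined by $\int\gf\,\dd\mu^{*}=\intoo\gf(-\log x)\,x^{-\gs}\,\dd\mu(x)$ (for test $\gf$). The hypotheses translate respectively to $F\in L^1(\bbR)$, $\mu^{*}(\bbR)<\infty$, $F$ bounded, and $F$ continuous $\mu^{*}$-a.e. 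A direct computation identifies $\Mf(\gs+\ii t)$ with $\hat F(t):=\int_\bbR F(y)e^{-\ii ty}\,\dd y$ and $\Mmu(1-\gs-\ii t)$ with $\check\mu^{*}(t):=\int_\bbR e^{\ii ty}\,\dd\mu^{*}(y)$; with $s=\gs+\ii t$ and $\dd s=\ii\,\dd t$, identity \eqref{lp} becomes
\begin{align*}
\int_{\bbR}F(y)\,\dd\mu^{*}(y)=\frac{1}{2\pi}\int_{-\infty}^{\infty}\hat F(t)\check\mu^{*}(t)\,\dd t,
\end{align*}
the right-hand integral being understood in the symmetric conditional sense.

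Fix an even $\eta\in C_c^{\infty}(\bbR)$ with $\eta(0)=1$ and $0\le\eta\le 1$, and set $\eta_\gd(t):=\eta(\gd t)$. The regularised quantity $J_\gd:=(2\pi)^{-1}\int\eta_\gd(t)\hat F(t)\check\mu^{*}(t)\,\dd t$ is absolutely convergent, because the integrand is bounded and compactly supported. Inserting the definition of $\check\mu^{*}$ and applying Fubini, then recognising the resulting inner $t$-integral as the inverse Fourier transform of $\eta_\gd\hat F$ at $y$, gives $J_\gd=\int_\bbR(F*\check\eta_\gd)(y)\,\dd\mu^{*}(y)$, where $\check\eta_\gd(y)=\gd^{-1}\check\eta(y/\gd)$ and $\check\eta\in L^1$. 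The family $(\check\eta_\gd)_\gd$ is thus a standard approximate identity of total mass $1$, so $(F*\check\eta_\gd)(y)\to F(y)$ at every continuity point of $F$, hence $\mu^{*}$-a.e.; combined with the uniform bound $\|F*\check\eta_\gd\|_\infty\le\|F\|_\infty\|\check\eta\|_1$ and the finiteness of $\mu^{*}$, dominated convergence gives $J_\gd\to\int_\bbR F\,\dd\mu^{*}$ as $\gd\downto 0$.

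It remains, and this is the crux, to show that $J_\gd$ also tends to $L/(2\pi)$, where $L$ denotes the conditional value of $\int g\,\dd t$ with $g:=\hat F\check\mu^{*}$. Decompose $g=g_e+g_o$ into even and odd parts; the evenness of $\eta_\gd$ annihilates $g_o$, so $\int\eta_\gd g\,\dd t=2\int_0^\infty\eta_\gd(t)g_e(t)\,\dd t$, and the symmetric conditional convergence of $\int g\,\dd t$ forces $G_e(A):=\int_0^A g_e(t)\,\dd t\to L/2$ as $A\to\infty$; in particular $G_e$ is bounded on $\ooo$. Integration by parts on $[0,\infty)$, using $\eta_\gd(0)=1$ and the compact support of $\eta_\gd$, followed by the substitution $u=\gd t$, gives $\int_0^\infty\eta_\gd(t)g_e(t)\,\dd t=-\gd\int_0^\infty\eta'(\gd t)G_e(t)\,\dd t=-\int_0^\infty\eta'(u)G_e(u/\gd)\,\dd u$. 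Since $G_e(u/\gd)\to L/2$ for each $u>0$, $\eta'\in L^1$, and $G_e$ is bounded, dominated convergence produces the limit $-(L/2)\int_0^\infty\eta'(u)\,\dd u=L/2$. Hence $\int\eta_\gd g\,\dd t\to L$, which together with the previous paragraph proves \eqref{lp}. The essential role of the evenness of $\eta$ is to annihilate $g_o$, so that the argument only requires the convergence and boundedness of $G_e$ — precisely what symmetric conditional convergence provides.
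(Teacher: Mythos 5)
Your proof is correct, and it reaches the result by a regularisation scheme genuinely different from the paper's. Both proofs begin the same way: transform $x=e^{\pm y}$ to reduce \eqref{lp} to a Parseval statement for the Fourier transform on $\bbR$, multiply the conditionally convergent frequency-side integral by a cutoff that tends to $1$, push the cutoff through Fubini to an approximate-identity convolution on the physical side, and finish with dominated convergence using the boundedness, $L^1$-ness and $\mu$-a.e.\ continuity of $F$. The divergence is in how the cutoff is chosen and how the conditional convergence is exploited. The paper takes the Fej\'er cutoff $\hks(t)=(1-|t|/s)_+$, whose inverse transform $k_s$ is an explicit nonnegative approximate identity, and observes that $\int(1-|t|/s)_+\,g(t)\,\dd t=\int_0^1\bigl(\int_{-us}^{us}g\bigr)\dd u$; conditional convergence then enters as Ces\`aro summability of $\int g$. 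You instead take an arbitrary even $\eta\in C_c^\infty$ with $\eta(0)=1$, whose inverse transform $\check\eta$ is merely an $L^1$ function of total mass $1$ (not necessarily nonnegative, which you correctly note is enough for the approximate-identity step since $F$ is bounded), and you handle the conditional convergence by the odd/even split and an Abel-type integration by parts, using that $G_e(A)=\int_0^A g_e\to L/2$ is bounded. The Fej\'er route is self-contained and gives a concrete kernel with pleasant extremal properties; your route is arguably more flexible, since it makes clear that any even, smooth, compactly supported cutoff works, and it isolates exactly where the symmetric (as opposed to two-sided) nature of the conditional convergence is used, namely in killing $g_o$ before the Abel argument. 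Both are complete proofs of the lemma.
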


Although we have not found \refL{LP} in the literature,
it is closely related to standard versions of Parseval's formula for Mellin
transforms. These are usually stated for two functions, while our lemma is
stated for a function and a measure; however, for absolutely continuous
$\mu$, our lemma reduces to a result for two functions.
In particular, in the case that $\mu$ is absolutely continuous, \refL{LP} 
is a special case of the more general \cite[Theorem 43]{Titchmarsh}.
(Note that we have shown in \refL{LM} that our measure $\gU$ is absolutely
continuous, and as discussed in \refS{SSgUk}, this extends to the measures
$\gU_k$ considered there; however, 
one advantage of our use of \refL{LP} is that we do not have to verify this
by a special argument.)
For completeness, we give a detailed proof below.

Note also that by a standard change of variables, as in the proof below,
any version of Parseval's formula for Mellin transforms is equivalent to 
a corresponding version of Parseval's formula (also called Plancherel's
formula)  for the Fourier transform on the real line;
see e.g.\ \cite{Titchmarsh}.
(From the point of view of abstract Harmonic analysis, the Mellin transform
is just the Fourier transform on the multiplicative group $(\bbR_+,\cdot)$
with Haar measure $\ddx x/x$.)

\begin{proof}[Proof of \refL{LP}]
We use a well-known transformation to the Fourier transform on $\bbR$,
which we do in two steps.
First, let $f_\gs$ and $\mu_\gs$ be the function and measure on $\bbR_+$
defined by
  \begin{align}
f_\gs(x):=x^{\gs}f(x),\qquad
    \dd\mu_{-\gs}(x) := x^{-\gs}\dd\mu(x)
  \end{align}
(i.e., the Radon--Nikodym derivative $\ddx\mu_{-\gs}/\dd\mu=x^{-\gs}$).
Note that 
then $\Mf_\gs(s)=\Mf(s+\gs)$ and $\Mmu_\gs(s)=\Mmu(s-\gs)$,
and that
$f_\gs$ and $\mu_\gs$ satisfy the assumptions of \refL{LP} with
$\gs=0$, and 
so we have reduced the lemma to this case.
(In particular, $\mu_\gs$  is a finite measure.)
Secondly, we change variables to $y\in\bbR$ by the mapping $y=\log x$;
we let $F(y):=f_\gs(e^y)$ and let $\nu$ be the (finite) measure on $\bbR$
corresponding to $\mu_\gs$. Then the assumptions on $f$ and $\mu$ imply
that $F\in L^1(\bbR)$, $F$ is bounded, $F$ is $\nu$-a.e.\ continuous,
and that $\nu$ is a finite measure on $\bbR$. Furthermore,
for any $\tau\in\bbR$,
\begin{align}
  \Mf(\gs+\ii\tau)
=\intoo f(x) x^{\gs+\ii\tau-1}\dd x
=\intoo f_\gs(x) x^{\ii\tau-1}\dd x
=\intoooo F(y) e^{\ii\tau y}\dd y =: \hF(\tau),
\end{align}
the Fourier transform of $F$,
and
\begin{align}
  \Mmu(1-(\gs+\ii\tau))
=\intoo x^{-\gs-\ii\tau}\dd \mu(x)
=\intoo  x^{-\ii\tau}\dd \mu_\gs(x)
=\intoooo e^{-\ii\tau y}\dd \nu(y) 
=: \hnu(-\tau)
.\end{align}

Hence, \refL{LP} follows from (and is equivalent to)
the following lemma for the Fourier transform on $\bbR$.
\end{proof}

\begin{lemma}\label{LT1}
Suppose that $f$ is an integrable function on $\bbR$
and that $\nu$ is a finite 
measure on $\bbR$
such that the integral $\intoooo\hf(t)\hnu(-t)\dd t$ converges
at least conditionally.
Suppose further that $f$ is bounded and $\nu$-a.e.\ continuous.
Then 
\begin{align}\label{parseval2}
 \intoooo f(x)\dd\nu(x) = \frac{1}{2\pi}\intoooo\hf(t)\hnu(-t)\dd t.
\end{align}
\end{lemma}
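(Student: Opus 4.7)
The plan is to prove \refL{LT1} by a standard approximate-identity argument using the Fejér kernel, which converts the conditionally convergent Fourier integral into an absolutely convergent one to which Fubini applies, and then recovers the original integral via Cesàro summation.

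First, for $A>0$, I introduce the Fejér kernel $F_A(u)=\frac{A}{2\pi}\bigpar{\sin(Au/2)/(Au/2)}^2$, whose Fourier transform is the triangle function $\widehat{F_A}(t)=(1-|t|/A)_+$, and recall that $F_A\ge0$, $\intoooo F_A=1$. For fixed $A$ the integrand
\begin{align*}
(1-|t|/A)_+\,\hf(t)\hnu(-t)
\;=\;(1-|t|/A)_+\intoooo\intoooo f(x)e^{\ii t(x-y)}\dd x\dd\nu(y)
\end{align*}
is absolutely integrable over $t\in[-A,A]$ and $(x,y)\in\bbR\times\bbR$, because $f\in L^1$, $\nu$ is finite, and the $t$-range is bounded. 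So Fubini applies, and performing the $t$-integral first gives $2\pi F_A(x-y)$. This yields the key identity
\begin{align*}
\frac{1}{2\pi}\int_{-A}^{A}(1-|t|/A)\hf(t)\hnu(-t)\dd t
=\intoooo(F_A*f)(y)\dd\nu(y).
\end{align*}

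Next I let $A\to\infty$ on both sides. On the right, Fejér's theorem says $(F_A*f)(y)\to f(y)$ at every point of continuity of $f$, hence at $\nu$-a.e.\ $y$, and $\|F_A*f\|_\infty\le\|f\|_\infty<\infty$, so since $\nu$ is finite, dominated convergence gives $\intoooo(F_A*f)\dd\nu\to\intoooo f\dd\nu$. On the left, write $G(A):=\int_{-A}^A\hf(t)\hnu(-t)\dd t$, which by hypothesis tends to some finite limit $L$ as $A\to\infty$. A routine change of order of integration (using $(1-|t|/A)=\frac1A\int_{|t|}^A\dd u$ for $|t|\le A$) gives
\begin{align*}
\int_{-A}^A(1-|t|/A)\hf(t)\hnu(-t)\dd t=\frac1A\int_0^A G(u)\dd u,
\end{align*}
and since $G(u)\to L$ the Cesàro average also tends to $L$. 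Combining the two limits yields \eqref{parseval2}.

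The substantive step to verify is the convergence $F_A*f\to f$ at $\nu$-a.e.\ point: Fejér's theorem in the form I need holds at every point of continuity of a bounded $f$ (this is the Fejér–Lebesgue theorem, and in fact continuity points are automatically Lebesgue points for bounded $f$), so the $\nu$-a.e.\ continuity hypothesis is exactly what makes the dominated convergence step work. The other potential obstacle is the Fubini justification for $\hf(t)\hnu(-t)$, but the $(1-|t|/A)_+$ cutoff reduces the $t$-range to a compact interval and the absolute integrability is then immediate from $f\in L^1$ and $\nu$ finite. Everything else is bookkeeping.
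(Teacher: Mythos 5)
Your proof is correct and follows essentially the same route as the paper's: the Fej\'er kernel $k_s$, the triangle-function Fourier multiplier, Fubini to reduce to $\int (k_s*f)\dd\nu$, dominated convergence on the measure side using boundedness and $\nu$-a.e.\ continuity, and a Ces\`aro-type averaging argument on the Fourier side to extract the conditionally convergent limit. The only cosmetic difference is that you phrase the left-hand limit as a Ces\`aro mean $\frac1A\int_0^A G(u)\dd u\to L$, whereas the paper writes the same quantity as $\int_0^1\bigpar{\int_{-us}^{us}\hnu\hf}\dd u$ and invokes dominated convergence in $u$; these are the same argument.
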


\begin{proof}
 We begin by taking
the F{\'e}jer kernel on $\bbR$, for $s>0$,
\begin{align}\label{ks1}
  k_s(x) :=
\frac{1-\cos(sx)}{\pi sx^2}=
\frac{\sin^2(sx/2)}{2\pi s(x/2)^2},
\end{align}
which is integrable and 
has the Fourier transform
\begin{align}\label{ks2}
  \hks(t)=(1-|t|/s)_+,
\end{align}
see e.g.\ \cite[VI.(1.8), p.~124]{Katznelson}.
Thus the convolution $k_s*f$ has Fourier transform
\begin{align}\label{ks3}
  \widehat{k_s*f}(t)=\hf(t)\hks(t)=(1-|t|/s)_+\hf(t).
\end{align}
The function $\hf(t)$ is bounded, since $f\in L^1$, and thus
\eqref{ks3} shows that $  \widehat{k_s*f}$ is integrable.
Consequently, the Fourier inversion formula applies and yields, 
for every $s>0$ and a.e.\ $x\in\bbR$,
\begin{align}\label{ks4}
  k_s*f(x) = \frac{1}{2\pi}\intoooo e^{-\ii tx}\hf(t)(1-|t|/s)_+\dd t.
\end{align}

We obtain from \eqref{ks4}
by Fubini's theorem (twice), since the double integrals are
absolutely convergent by the assumptions that $f\in L^1$ and $\nu$ is finite,
and thus $\hf$ and $\hnu$ are bounded,
\begin{align}\label{ks5b}
  \intoooo k_s*f(x)\dd \nu(x)&
=\intoooo \frac{1}{2\pi}\intoooo e^{-\ii tx}\hf(t)(1-|t|/s)_+\dd t \dd\nu(x)
\\\notag&
= \frac{1}{2\pi}\int_{-s}^s \hnu(-t)\hf(t)(1-|t|/s)\dd t
\\\notag&
=\frac{1}{2\pi}\intoi \int_{-us}^{us}\hnu(-t)\hf(t)\dd t \dd u
.\end{align}
The assumption that $\intoooo \hf(t)\hnu(-t)\dd t$ converges conditionally
means that as $s\to\infty$, the inner integral
on the last line of \eqref{ks5b} tends to
$J:=\intoooo\hf(t)\hnu(-t)\dd t$;
it follows also that 
this inner integral is bounded uniformly in $u$ and $s$.
Hence, \eqref{ks5b} and dominated convergence show that as \stoo,
\begin{align}\label{ks6b}
  \intoooo k_s*f(x)\dd\nu(x)&
\to 
\frac{1}{2\pi}\intoi J \dd u=\frac{1}{2\pi} J
.\end{align}
Moreover, since $f$ is bounded, it is easily seen (and well-known)
that, as \stoo, 
$k_s*f(x)\to f(x)$ for every $x$ such that $f$ is continuous at $x$;
by assumption this holds for $\nu$-a.e.\ $x$.
Since furthermore $k_s*f$ is bounded (by $\sup_x|f(x)|$),
it follows by dominated convergence that
\begin{align}\label{ks7b}
    \intoooo k_s*f(x)\dd \nu(x)
\to \intoooo f(x)\dd\nu(x)
\qquad \text{as } \stoo
.\end{align}
Finally, \eqref{parseval2} follows from \eqref{ks6b} and \eqref{ks7b}.
\end{proof}

\begin{remark}
  \refL{LT1} extends, with the same proof, to complex measures $\nu$. 
\end{remark}

\begin{remark}
As far as we know, it is unknown 
(even in the absolutely continuous case)
whether 
\eqref{parseval2} holds for any integrable $f$ and finite measure $\nu$
such that both integrals in \eqref{parseval2} converge.
\end{remark}

\end{document}